%% Based on a TeXnicCenter-Template by Gyorgy SZEIDL.
%%%%%%%%%%%%%%%%%%%%%%%%%%%%%%%%%%%%%%%%%%%%%%%%%%%%%%%%%%%%%

%------------------------------------------------------------
%
\documentclass[a4paper, reqno]{amsart}
%
%----------------------------------------------------------
% This is a sample document for the AMS LaTeX Article Class
% Class options
%        -- Point size:  8pt, 9pt, 10pt (default), 11pt, 12pt
%        -- Paper size:  letterpaper(default), a4paper
%        -- Orientation: portrait(default), landscape
%        -- Print size:  oneside, twoside(default)
%        -- Quality:     final(default), draft
%        -- Title page:  notitlepage, titlepage(default)
%        -- Start chapter on left:
%                        openright(default), openany
%        -- Columns:     onecolumn(default), twocolumn
%        -- Omit extra math features:
%                        nomath
%        -- AMSfonts:    noamsfonts
%        -- PSAMSFonts  (fewer AMSfonts sizes):
%                        psamsfonts
%        -- Equation numbering:
%                        leqno(default), reqno (equation numbers are on the right side)
%        -- Equation centering:
%                        centertags(default), tbtags
%        -- Displayed equations (centered is the default):
%                        fleqn (equations start at the same distance from the right side)
%        -- Electronic journal:
%                        e-only
%------------------------------------------------------------
% For instance the command
%          \documentclass[a4paper,12pt,reqno]{amsart}
% ensures that the paper size is a4, fonts are typeset at the size 12p
% and the equation numbers are on the right side
%
\usepackage{amsmath,mathrsfs}
\usepackage{amsfonts}
\usepackage{amssymb}
\usepackage{graphicx,color}
\usepackage[square,numbers]{natbib}
%------------------------------------------------------------
% Theorem like environments
%

\newtheorem{theorem}{Theorem}

\newtheorem{lemma}[theorem]{Lemma}
\newtheorem{proposition}[theorem]{Proposition}
\newtheorem{algorithm}[theorem]{Algorithm}

\theoremstyle{definition}
\newtheorem{definition}[theorem]{Definition}
\newtheorem{assumptions}[theorem]{Assumptions}
\newtheorem{notation}[theorem]{Notation}

\theoremstyle{remark}
\newtheorem{remark}[theorem]{Remark}

\numberwithin{theorem}{section}
\numberwithin{equation}{section}

\def\XXint#1#2#3{{\setbox0=\hbox{$#1{#2#3}{\int}$}
     \vcenter{\hbox{$#2#3$}}\kern-.5\wd0}}

\newcommand{\dd}{\; \mathrm{d}}

%--------------------------------------------------------
\begin{document}
\title[A Universal Differentiability Set in The Heisenberg Group]{A Measure Zero Universal Differentiability Set in the Heisenberg Group}
\author[Andrea Pinamonti]{Andrea Pinamonti}
\address[Andrea Pinamonti]{Dipartimento di Matematica, Universita di Bologna, Piazza di Porta San Donato 5, Bologna 40126, Italy}
\email[Andrea Pinamonti]{Andrea.Pinamonti@gmail.com}
\author[Gareth Speight]{Gareth Speight}
\address[Gareth Speight]{Department of Mathematical Sciences, University of Cincinnati, 2815 Commons Way, Cincinnati, OH 45221, United States}
\email[Gareth Speight]{Gareth.Speight@uc.edu}

\begin{abstract}
We show that the Heisenberg group $\mathbb{H}^n$ contains a measure zero set $N$ such that every Lipschitz function $f\colon \mathbb{H}^n \to \mathbb{R}$ is Pansu differentiable at a point of $N$. The proof adapts the construction of small `universal differentiability sets' in the Euclidean setting: we find a point of $N$ and a horizontal direction where the directional derivative in horizontal directions is almost locally maximal, then deduce Pansu differentiability at such a point.
\end{abstract}

\maketitle

\section{Introduction}
Rademacher's theorem states that every Lipschitz function $f\colon \mathbb{R}^{n}\to \mathbb{R}^{m}$ is differentiable almost everywhere with respect to Lebesgue measure. This result is classical but has many applications and has inspired much research. One direction of this research is the extension of Rademacher's theorem to more general spaces, while another involves finding points of differentiability in extremely small sets. In this article we investigate both directions, by constructing a measure zero `universal differentiability set' in the Heisenberg group (Theorem \ref{maintheorem}), the most studied non-Euclidean Carnot group.

A Carnot group is a Lie group (smooth manifold which is also a group with smooth operations) whose Lie algebra (tangent space at the identity) admits a stratification. This stratification decomposes the Lie algebra as a direct sum of finitely many vector spaces; one of these consists of privileged `horizontal directions' which generate the other directions using Lie brackets. The stratification allows one to define dilations on the group. Carnot groups have a natural metric, defined using lengths of horizontal curves, and Haar measure invariant under group operations.

Using the group translations and dilations, one can define Pansu differentiability of functions between Carnot groups. Pansu's theorem states that Lipschitz functions between Carnot groups are Pansu differentiable almost everywhere with respect to the Haar measure \cite{Pan89}. This can be applied to show that every Carnot group (other than Euclidean space itself) contains no subset of positive measure which bi-Lipschitz embeds into a Euclidean space. Carnot groups are a source of many questions in analysis and geometry \cite{CDPT07, Gro96, LeD, Mon02, Spe14, Vit14}. 

The Heisenberg group $\mathbb{H}^{n}$ (Definition \ref{Heisenberg}) is the simplest non-Euclidean Carnot group, but is still not completely understood. For example, a geometric notion of intrinsic Lipschitz function between subgroups of $\mathbb{H}^{n}$ was introduced in \cite{FSC2} to study rectifiable sets \cite{FSSC2,FSSC3,Mag} and minimal surfaces \cite{CMPSC1,CMPSC2,Monti,SCV}. Intrinsic Lipschitz functions need not be metric Lipschitz but for certain subgroups they are intrinsically differentiable almost everywhere \cite{FSC2}. The most general statement for subgroups is not yet known.

Differentiability and Rademacher's theorem are also studied for functions between Banach spaces. There are versions of Rademacher's theorem for G\^{a}teaux differentiability of Lipschitz functions, but the case of the stronger Fr\'{e}chet differentiability is not fully understood \cite{LPT13}. One of the main ideas in the present article is that (almost local) maximality of directional derivatives implies differentiability. This was first used by Preiss \cite{Pre90} to find points of Fr\'{e}chet differentiability of Lipschitz functions on Banach spaces with separable dual. However, \cite{Pre90} does not give an `almost everywhere' type result. Indeed, it is not known if three real-valued Lipschitz functions on a separable Hilbert space have a common point of Fr\'{e}chet differentiability.

Cheeger \cite{Che99} gave a generalization of Rademacher's theorem for Lipschitz functions defined on metric spaces equipped with a doubling measure and satisfying a Poincar\'e inequality. This has inspired much research in the area of analysis on metric measure spaces. Bate \cite{Bat15} showed that Cheeger differentiability is strongly related to existence of many directional derivatives. However, in this context, the emphasis is on differentiability almost everywhere, rather than pointwise differentiability. 

A rather different direction of research asks whether one can find points of differentiability in extremely small sets. In particular, we can ask if Rademacher's theorem is sharp: given a set $N\subset \mathbb{R}^n$ of Lebesgue measure zero, does there exist a Lipschitz function $f\colon \mathbb{R}^{n} \to \mathbb{R}^{m}$ which is differentiable at no point of $N$? 

If $n\leq m$ the answer is yes: for $n=1$ this is rather easy \cite{Zah46}, while the general case is very difficult and combines ongoing work of multiple authors \cite{ACP10, CJ15}. This implies that in Rademacher's theorem, the Lebesgue measure cannot be replaced by a singular measure. The recent paper \cite{AM14} proves a Rademacher type theorem for an arbitrary finite measure, but in this case the directions of differentiability at almost every point depend on the measure.

If $n>m$ the answer to our question is no: there are Lebesgue measure zero sets $N\subset \mathbb{R}^n$ such that every Lipschitz function $f\colon \mathbb{R}^{n}\to \mathbb{R}^{m}$ is differentiable at a point of $N$. The case $m=1$ was a surprising corollary of the previously mentioned result in Banach spaces by Preiss \cite{Pre90}. The case $m>1$ were resolved by combining tools from the Banach space theory with a technique for avoiding porous sets \cite{PS15}. In all cases, maximizing directional derivatives had a crucial role.

Sets $N\subset \mathbb{R}^n$ containing a point of differentiability for every real-valued Lipschitz function are now called universal differentiability sets. The argument in \cite{Pre90} was greatly refined to show that $\mathbb{R}^{n}$, $n>1$, contains universal differentiability sets which are compact and of Hausdorff dimension one \cite{DM11, DM12}. This was improved to obtain a set which even has Minkowski dimension one \cite{DM14}. 

In the present article we show that one can adapt the ideas of \cite{Pre90} to the Heisenberg group. Our main result is Theorem \ref{maintheorem} which asserts the following: there is a Lebesgue measure zero set $N\subset \mathbb{H}^{n}$ such that every Lipschitz function $f\colon \mathbb{H}^{n} \to \mathbb{R}$ is Pansu differentiable at a point of $N$. This illustrates both the flexibility of Preiss' argument and the geometry of $\mathbb{H}^{n}$, which is rather far from being a Banach space. We now give an overview of the paper and information about the proof of Theorem \ref{maintheorem}. See Section \ref{prelim} for the relevant definitions.

In Section \ref{strongmaximal} we first introduce directional derivatives in horizontal directions for Lipschitz functions $f\colon \mathbb{H}^{n} \to \mathbb{R}$, and compare the supremum of directional derivatives with the Lipschitz constant. We then construct simple horizontal curves joining the origin to other points, and use these curves to study Pansu differentiability of the Carnot-Carath\'{e}odory distance. Finally we show that existence of a maximal horizontal directional derivative implies Pansu differentiability (Theorem \ref{maximalityimpliesdifferentiability}). This is an adaptation of a similar statement in Banach spaces \cite[Theorem 2.4]{Fit84}. We do not claim that such a maximal horizontal directional derivative exists; `almost maximal' horizontal directional derivatives have a role in the rest of the article.

In Section \ref{nullset} we define our `universal differentiability set' $N$, which may be chosen as any Lebesgue null $G_{\delta}$ set containing all horizontal lines joining points of $\mathbb{Q}^{2n+1}$. We construct useful horizontal curves inside this set, which allow us to modify a piece of a horizontal line to pass through a nearby point, without distorting the length or Lipschitz constant too much.

In Section \ref{sectionalmostmaxpansu} we first estimate how horizontal lines diverge in $\mathbb{H}^n$ and study some simple $\mathbb{H}$-linear maps. We then state Theorem \ref{almostmaximalityimpliesdifferentiability}, which is one of two theorems that taken together will prove Theorem \ref{maintheorem}. Theorem \ref{almostmaximalityimpliesdifferentiability} states that if $x\in N$ and $E$ is a horizontal direction at which the directional derivative $Ef(x)$ is `almost maximal', then $f$ is Pansu differentiable at $x$. This is an adaptation of \cite[Theorem 4.1]{Pre90}. Intuitively, almost maximality means that competing directional derivatives must satisfy an estimate which bounds changes in difference quotients by changes in directional derivatives. Such a bound is useful later when one wants to construct an almost maximal directional derivative. To prove Theorem \ref{almostmaximalityimpliesdifferentiability} one uses a contradiction argument. If $f$ is not differentiable at $x$ then there is some nearby point where the change in $f$ is too large. One modifies the line along which the directional derivative is large to form an auxillary curve inside $N$ which passes through the nearby point. On this curve we find a point and direction giving a larger directional derivative and satisfying the required bound on difference quotients. This gives a contradiction to almost maximality. New ideas in our adaptation to $\mathbb{H}^{n}$ include a restriction to horizontal directional derivatives and choosing an auxillary curve which is horizontal, with carefully estimated length and direction.

In Section \ref{construction} we adapt \cite[Theorem 3.1]{DM11} to show that one can actually construct an almost maximal directional derivative in the sense of Theorem \ref{almostmaximalityimpliesdifferentiability}. Proposition \ref{DoreMaleva} states that if $f_{0}\colon \mathbb{H}^{n}\to \mathbb{R}$ is Lipschitz then one can find $f\colon \mathbb{H}^{n} \to \mathbb{R}$ such that $f-f_{0}$ is $\mathbb{H}$-linear and $f$ has an almost maximal directional derivative at a point of $N$. This is done by constructing sequences $x_{n}\in N$, $E_{n}$ and $f_{n}$ such that $f_{n}-f_{0}$ is $\mathbb{H}$-linear and the directional derivatives $E_{n}f_{n}(x_{n})$ are closer and closer to being almost maximal compared to the allowed competitors. Then $x_{n} \to x_{\ast}$ and $f_{n} \to f$ for some $x_{\ast}$ and $f$ since at every stage the changes were small, while $E_{n} \to E$ for some $E$ since $\mathbb{H}$-linear perturbations are added to make directional derivatives in directions close to $E_{n}$ larger. We ensure $x_{\ast}\in N$ by using the fact that $N$ is $G_{\delta}$. Finally one must show that the directional derivative $Ef(x)$ exists and is almost maximal; to prove this it is crucial that at each stage we maximize over a constrainted set of points and directions. Our argument follows very closely that of \cite{DM11}, modified to use horizontal directions, $\mathbb{H}$-linear maps and H\"{o}lder equivalence of the Carnot-Carath\'{e}odory and Euclidean distance. Finally we observe that combining Theorem \ref{almostmaximalityimpliesdifferentiability} and Proposition \ref{DoreMaleva} gives Theorem \ref{maintheorem}.

One might ask if Theorem \ref{maintheorem} can be extended to general Carnot groups, or to construct compact sets of small dimension as in the Euclidean theory. Since this would require further new ideas and the proof of Theorem \ref{maintheorem} is already complicated, we do not address this here.

\medskip

\noindent \textbf{Acknowledgement.} This work was carried out with the support of the grant ERC ADG GeMeThNES. The authors thank Luigi Ambrosio for his support and David Preiss for suggesting that one first finds the analogue in $\mathbb{H}^{n}$ of the observation, previously applied in Banach spaces, that existence of a (truly, not almost) maximal directional derivative implies differentiability. 

The authors thank kind referees for suggesting numerous improvements to this article.

\section{Preliminaries}\label{prelim}
In this section we recall the Heisenberg group, horizontal curves, the Carnot-Carath\'{e}odory distance and Pansu differentiability \cite{BLU07, CDPT07, Gro96, LeD, Mon02, Vit14}.

Denote the Euclidean norm and inner product by $|\cdot|$ and $\langle \cdot, \cdot \rangle$ respectively. We represent points of $\mathbb{R}^{2n+1}$ as triples $(a,b,c)$, where $a, b\in \mathbb{R}^n$ and $c\in \mathbb{R}$.

\begin{definition}\label{Heisenberg}
The Heisenberg group $\mathbb{H}^n$ is $\mathbb{R}^{2n+1}$ equipped with the non commutative group law:
\[(a,b,c) (a',b',c') = (a+a', \, b+b', \, c+c'-2(\langle a, b'\rangle - \langle b, a'\rangle)).\]
\end{definition}

The identity element in $\mathbb{H}^n$ is $0$ and inverses are given by $x^{-1}=-x$.

\begin{definition}\label{Dilations}
For $r>0$ define dilations $\delta_{r}\colon \mathbb{H}^{n} \to \mathbb{H}^{n}$ by:
\[\delta_{r}(a,b,c)=(ra,rb,r^{2}c),\]
where $a, b\in \mathbb{R}^{n}$ and $c\in \mathbb{R}$.
\end{definition}

Dilations $\delta_{r}\colon \mathbb{H}^n \to \mathbb{H}^n$ and the projection $p\colon \mathbb{H}^n \to \mathbb{R}^{2n}$ onto the first $2n$ coordinates are group homomorphisms, where $\mathbb{R}^{2n}$ is considered as a group with the operation of addition. 

As sets there is no difference between $\mathbb{H}^{n}$ and $\mathbb{R}^{2n+1}$. Nevertheless, we sometimes think of elements of $\mathbb{H}^{n}$ as points and elements of $\mathbb{R}^{2n+1}$ as vectors. Let $e_{i}$ denote the standard basis vectors of $\mathbb{R}^{2n+1}$ for $1\leq i \leq 2n+1$. That is, $e_{i}$ has all coordinates equal to $0$ except for a $1$ in the $i$'th coordinate. Note that, in much of the literature on Carnot groups, the notation $\partial / \partial x_{i}$ is used instead of $e_{i}$. We next define a distinguished family of `horizontal' directions.

\begin{definition}\label{vectorfields}
For $1\leq i\leq n$ define vector fields on $\mathbb{H}^{n}$ by:
\[X_{i}(a,b,c)=e_{i}+2b_{i}e_{2n+1}, \quad Y_{i}(a,b,c)=e_{i+n}-2a_{i}e_{2n+1}.\]
Let $V=\mathrm{Span}\{X_{i}, Y_{i}: 1\leq i\leq n\}$ and $\omega$ be the inner product norm on $V$ making $\{X_{i}, Y_{i}: 1\leq i \leq n\}$ an orthonormal basis. We say that the elements of $V$ are horizontal vector fields or horizontal directions.
\end{definition}

An easy calculation shows that if $E\in V$ then
\begin{equation}\label{linestolines}x (tE(0)) = x+tE(x)\end{equation}
for any $x\in \mathbb{H}^n$ and $t\in \mathbb{R}$. That is, `horizontal lines' are preserved by group translations. If $E\in V$ then $E(0)$ is a vector $v\in \mathbb{R}^{2n+1}$ with $v_{2n+1}=0$. Conversely, for any such $v$ there exists $E\in V$ such that $E(0)=v$. If $E\in V$ then $p(E(x))$ is independent of $x$, so we can unambiguously define $p(E) \in \mathbb{R}^{2n}$. The norm $\omega$ is then equivalently given by $\omega(E)=|p(E)|$. 

We now use the horizontal directions to define horizontal curves and horizontal length in $\mathbb{H}^{n}$. Let $I$ denote a subinterval of $\mathbb{R}$. Recall that a map $\gamma\colon I \to \mathbb{R}^{2n+1}$ is absolutely continuous if it is differentiable almost everywhere and $\gamma(t)-\gamma(s)=\int_{s}^{t} \gamma'$ whenever $s, t \in I$.

\begin{definition}\label{horizontalcurve}
An absolutely continuous curve $\gamma\colon I \to \mathbb{H}^{n}$ is a horizontal curve if there is $h\colon I \to \mathbb{R}^{2n}$ such that for almost every $t\in I$:
\[\gamma'(t)=\sum_{i=1}^{n}(h_{i}(t)X_{i}(\gamma(t))+h_{n+i}(t)Y_{i}(\gamma(t))).\]
Define the horizontal length of such a curve by:
\[L_{\mathbb{H}}(\gamma)=\int_{I} |h|.\]
\end{definition}

Notice that in Definition \ref{horizontalcurve} we have $|(p \circ \gamma)'(t)|=|h(t)|$ for almost every $t$, so $L_{\mathbb{H}}(\gamma)$ is computed by integrating $|(p \circ \gamma)'(t)|$. That is, $L_{\mathbb{H}}(\gamma)=L_{\mathbb{E}}(p \circ \gamma)$, where $L_{\mathbb{E}}$ is the Euclidean length of a curve in Euclidean space. It can be shown that left group translations preserve horizontal lengths of horizontal curves.

In the next lemma we recall that horizontal curves are lifts of curves in $\mathbb{R}^{2n}$.

\begin{lemma}\label{lift}
An absolutely continuous curve $\gamma\colon I\to \mathbb{H}^{n}$ is a horizontal curve if and only if for almost every $t\in I$:
\[\gamma_{2n+1}'(t)=2\sum_{i=1}^{n} (\gamma_{i}'(t)\gamma_{n+i}(t)-\gamma_{n+i}'(t)\gamma_{i}(t)).\]
\end{lemma}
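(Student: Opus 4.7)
The plan is to prove both directions by direct computation, exploiting the explicit form of the vector fields $X_i$ and $Y_i$ in Definition \ref{vectorfields}. The key observation is that among the $2n+1$ scalar equations packaged in $\gamma'(t) = \sum_i h_i(t) X_i(\gamma(t)) + \sum_i h_{n+i}(t) Y_i(\gamma(t))$, the first $2n$ equations rigidly determine the controls $h_j(t) = \gamma_j'(t)$ for $1 \le j \le 2n$, so the whole vector identity collapses to a single scalar condition on $\gamma_{2n+1}'(t)$.

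For the forward direction, suppose $\gamma$ is horizontal with controls $h\colon I \to \mathbb{R}^{2n}$. Substituting $X_i(\gamma(t)) = e_i + 2\gamma_{n+i}(t) e_{2n+1}$ and $Y_i(\gamma(t)) = e_{i+n} - 2\gamma_i(t) e_{2n+1}$ into the defining equation and reading off components, I get $h_i(t) = \gamma_i'(t)$ and $h_{n+i}(t) = \gamma_{n+i}'(t)$ for $1 \le i \le n$ from the first $2n$ coordinates. The $(2n+1)$-th coordinate then reads
\[
\gamma_{2n+1}'(t) = \sum_{i=1}^n \bigl( 2 h_i(t) \gamma_{n+i}(t) - 2 h_{n+i}(t) \gamma_i(t) \bigr),
\]
which, after inserting $h_i = \gamma_i'$ and $h_{n+i} = \gamma_{n+i}'$, is exactly the claimed lift equation.

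For the converse, assume $\gamma$ is absolutely continuous and satisfies the lift equation almost everywhere. Define $h\colon I \to \mathbb{R}^{2n}$ by $h_j(t) = \gamma_j'(t)$ for $1 \le j \le 2n$ (well-defined almost everywhere by absolute continuity). Then by construction, for the first $2n$ coordinates the vector identity
\[
\gamma'(t) = \sum_{i=1}^n h_i(t) X_i(\gamma(t)) + \sum_{i=1}^n h_{n+i}(t) Y_i(\gamma(t))
\]
holds trivially, while the $(2n+1)$-th coordinate equation becomes precisely the lift condition assumed on $\gamma_{2n+1}'$. Hence $\gamma$ is horizontal with controls $h$.

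There is no real obstacle here: the lemma is essentially an unpacking of the definitions, and the only mild care needed is that the identifications and substitutions happen almost everywhere, which is guaranteed by the absolute continuity hypothesis on $\gamma$. I would present the argument in a single short paragraph verifying the component-by-component equivalence, perhaps tabulating the three coordinate blocks ($a$-coordinates $\gamma_1, \ldots, \gamma_n$, $b$-coordinates $\gamma_{n+1}, \ldots, \gamma_{2n}$, and the vertical coordinate $\gamma_{2n+1}$) for clarity.
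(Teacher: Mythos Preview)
Your proof is correct and follows essentially the same approach as the paper: substitute the explicit expressions for $X_i$ and $Y_i$ into the horizontality condition, read off $h_j=\gamma_j'$ from the first $2n$ coordinates, and observe that the $(2n+1)$-th coordinate equation is exactly the lift condition. If anything, you are slightly more explicit than the paper in spelling out the converse direction.
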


\begin{proof}
By definition, $\gamma$ is horizontal if and only if there exists $h \colon I \to \mathbb{R}^{2n}$ such that for almost every $t\in I$:
\[\gamma'(t)=\sum_{i=1}^n (h_{i}(t)X_{i}(\gamma(t))+h_{n+i}(t)Y_{i}(\gamma(t))).\]
Using Definition \ref{vectorfields}, the right hand side of this expression is exactly
\[(h_{1}(t), \ldots, h_{2n}(t), 2 \sum_{i=1}^{n} (h_{i}(t)\gamma_{n+i}(t)-2h_{n+i}(t)\gamma_{i}(t))).\]
By examining the initial coordinates we see $\gamma_{i}'(t)=h_{i}(t)$ for $1\leq i\leq 2n$. Hence:
\[\gamma_{2n+1}'(t)= 2\sum_{i=1}^{n} (\gamma_{i}'(t)\gamma_{n+i}(t)- \gamma_{n+i}'(t)\gamma_{i}(t))\]
for almost every $t\in I$.
\end{proof}

Any two points of $\mathbb{H}^{n}$ can be joined by a horizontal curve of finite horizontal length. This is a particular instance of Chow's Theorem in subriemannian geometry. We use this fact to define the Carnot-Carath\'{e}odory distance.

\begin{definition}\label{carnotdistance}
Define the Carnot-Carath\'{e}odory distance $d$ on $\mathbb{H}^{n}$ by:
\[d(x,y)=\inf \{L_{\mathbb{H}}(\gamma)\colon \gamma \mbox{ is a horizontal curve joining } x \mbox{ to }y\}.\]
Denote $d(x)=d(x,0)$ and $B_{\mathbb{H}}(x,r):=\{y\in \mathbb{H}^n \colon d(x,y)<r\}$.
\end{definition}

It is known that geodesics exist in the Heisenberg group. That is, the infimum in Definition \ref{carnotdistance} is actually a minimum. The Carnot-Carath\'{e}odory distance respects the group law and dilations \cite[(5.10) and (5.11)]{BLU07} - for every $g, x, y\in \mathbb{H}^{n}$ and $r>0$:
\begin{itemize}
\item $d(gx,gy)=d(x,y)$,
\item $d(\delta_{r}(x),\delta_{r}(y))=rd(x,y)$.
\end{itemize}

Notice $d(x,y)\geq |p(y)-p(x)|$, since the projection of a horizontal curve joining $x$ to $y$ is a curve in $\mathbb{R}^{2n}$ joining $p(x)$ to $p(y)$. Using the known fact that projections of geodesics in $\mathbb{H}^n$ are arcs of circles, it is also possible to show \cite[page 32]{CDPT07}:
\begin{equation}\label{squarerootinequality}
d(x)\geq \sqrt{|x_{2n+1}|}.
\end{equation}

The Carnot-Carath\'{e}odory distance and the Euclidean distance are topologically equivalent but not Lipschitz equivalent. However, they are H\"{o}lder equivalent on compact sets \cite[Corollary 5.2.10 and Proposition 5.15.1]{BLU07}.

\begin{proposition}\label{euclideanheisenberg}
Let $K\subset \mathbb{H}^{n}$ be a compact set. Then there exists a constant $C_{\mathrm{H}}=C_{\mathrm{H\ddot{o}lder}}(K) \geq 1$ such that for any $x, y\in K$:
\[|x-y|/C_{\mathrm{H}} \leq d(x,y)\leq C_{\mathrm{H}}|x-y|^{\frac{1}{2}}.\]
\end{proposition}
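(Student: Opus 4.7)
The plan is to exploit left-invariance of $d$ and reduce both inequalities to estimates for $d(0,z)$ with $z = x^{-1}y$. Writing $x=(a,b,c)$ and $y=(a',b',c')$, the group law in Definition \ref{Heisenberg} gives $p(z) = (a'-a,\, b'-b)$ and $z_{2n+1} = (c'-c) + 2(\langle a, b'\rangle - \langle b, a'\rangle)$. Since $\langle a,b'\rangle - \langle b, a'\rangle = \langle a, b'-b\rangle - \langle b, a'-a\rangle$ and $M := \sup_{x\in K}|x|$ is finite by Euclidean compactness, one obtains
\[|z_{2n+1} - (c'-c)| \leq 2\sqrt{2}\,M\,|p(z)| \leq 2\sqrt{2}\,M\,|x-y|.\]

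For the upper bound I would construct an explicit horizontal curve from $0$ to $z$ in two stages. The straight segment $t \mapsto (tp(z), 0)$, $t\in[0,1]$, has vanishing vertical derivative and so is horizontal by Lemma \ref{lift}, with length $|p(z)|$; it ends at $(p(z),0)$. After left-translating by $(p(z),0)$ it remains to bound $d(0, (0,0,z_{2n+1}))$, for which I would use a circular loop of radius $r$ in one pair of horizontal coordinates (say $a_1,b_1$): a direct computation via Lemma \ref{lift} shows such a loop accumulates $\pm 4\pi r^2$ in the vertical coordinate at length $2\pi r$, so choosing $4\pi r^2 = |z_{2n+1}|$ and the appropriate orientation yields $d(0,(0,0,z_{2n+1})) \leq \sqrt{\pi|z_{2n+1}|}$. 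Combined with the cross-term estimate this gives
\[d(0,z) \leq |p(z)| + \sqrt{\pi|z_{2n+1}|} \leq |x-y| + \sqrt{\pi(1+2\sqrt{2}M)\,|x-y|},\]
and since $|x-y| \leq \mathrm{diam}(K)^{1/2}\,|x-y|^{1/2}$ for $x,y\in K$, the bound $d(x,y) \leq C_{\mathrm{H}}\,|x-y|^{1/2}$ follows.

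For the lower bound I would combine $d(x,y) \geq |p(z)|$ (the projection to $\mathbb{R}^{2n}$ of any horizontal curve from $x$ to $y$ joins $p(x)$ to $p(y)$) with the vertical estimate $d(x,y) \geq \sqrt{|z_{2n+1}|}$ from \eqref{squarerootinequality}. The cross-term identity then yields $|c'-c| \leq |z_{2n+1}| + 2\sqrt{2}M|p(z)| \leq d(x,y)^2 + 2\sqrt{2}M\,d(x,y)$. Using the upper bound just established, $d$ is bounded on $K$ by some $D$, so $|c'-c| \leq (D + 2\sqrt{2}M)\,d(x,y)$. Substituting into $|x-y|^2 = |p(z)|^2 + (c'-c)^2$ produces $|x-y| \leq C\, d(x,y)$ for a constant $C$ depending only on $K$, as required.

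The delicate step is the explicit loop construction: confirming horizontality via Lemma \ref{lift} and correctly computing the net change in the vertical coordinate. Everything else amounts to bookkeeping with the group law, the projection inequality, the vertical bound \eqref{squarerootinequality}, and the cross-term identity above.
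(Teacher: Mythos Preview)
Your proof is correct. The paper does not actually prove this proposition: it is quoted from \cite[Corollary 5.2.10 and Proposition 5.15.1]{BLU07}, so there is nothing to compare against. Your argument is a clean, self-contained route: the reduction to $z=x^{-1}y$ via left-invariance, the explicit ``segment plus loop'' upper bound, and the lower bound from $d\geq |p(z)|$ together with \eqref{squarerootinequality} all go through exactly as you describe. The loop computation is the only place where care is needed, and your constants are right: with the convention of Lemma~\ref{lift} a closed planar circle of radius $r$ lifts to vertical increment $\pm 4\pi r^2$ at length $2\pi r$, giving $d\big(0,(0,0,z_{2n+1})\big)\leq \sqrt{\pi|z_{2n+1}|}$. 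One cosmetic point: for the constant $C_{\mathrm H}\geq 1$ to cover both inequalities simultaneously you should take the maximum of the two constants produced by your upper and lower bounds, which is implicit but worth stating.
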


The Koranyi distance is Lipschitz equivalent to the Carnot-Carath\'{e}odory distance and given by the formula:
\begin{equation}\label{Koranyi}
d_{K}(x,y)=\|x^{-1}y\|_{K}, \mbox{ where } \|(a,b,c)\|_{K}=(|(a,b)|^{4}+c^{2})^{\frac{1}{4}}.
\end{equation}
We use the Koranyi distance occasionally to simplify some calculations.

We now make some simple observations about the distances introduced. They behave very simply if we follow a fixed horizontal direction.

\begin{lemma}\label{horizontaldistances}
If $E\in V$ then:
\begin{itemize}
\item $|E(0)|=\omega(E)=d(E(0))$,
\item $d(x,x+tE(x))=t\omega(E)$ for any $x\in \mathbb{H}^n$ and $t\in \mathbb{R}$.
\end{itemize}
\end{lemma}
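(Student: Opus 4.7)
My plan is to write $E$ in terms of the orthonormal basis and translate everything to explicit coordinate calculations, then use left-invariance of $d$ together with \eqref{linestolines} to reduce the second bullet to the first.

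First, I would write $E = \sum_{i=1}^n (\alpha_i X_i + \beta_i Y_i)$, so that $E(0) = (\alpha_1,\dots,\alpha_n,\beta_1,\dots,\beta_n,0)$ because $X_i(0) = e_i$ and $Y_i(0) = e_{n+i}$. Then $|E(0)|^2 = \sum_i (\alpha_i^2 + \beta_i^2) = \omega(E)^2$ directly from Definition \ref{vectorfields}, giving the identity $|E(0)| = \omega(E)$.

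Next I would show $d(E(0)) = |E(0)|$ via two matching inequalities. For the upper bound, consider the straight-line curve $\gamma \colon [0,1] \to \mathbb{H}^n$ with $\gamma(t) = tE(0)$. Since the last coordinate of $E(0)$ is zero, $\gamma_{2n+1}'(t) \equiv 0$ and the right hand side in Lemma \ref{lift} also vanishes, so $\gamma$ is horizontal. Using $L_{\mathbb{H}}(\gamma) = L_{\mathbb{E}}(p\circ\gamma) = |E(0)|$ yields $d(E(0)) \leq |E(0)|$. For the lower bound, apply $d(x,y) \geq |p(y) - p(x)|$ (stated just before \eqref{squarerootinequality}) to get $d(E(0)) \geq |p(E(0))| = |E(0)|$. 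This completes the first bullet.

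For the second bullet, by \eqref{linestolines} we have $x + tE(x) = x(tE(0))$, so left-invariance of $d$ gives $d(x, x+tE(x)) = d(0, tE(0)) = d(tE(0))$. Because $E(0)$ has vanishing last coordinate, $\delta_{|t|}(E(0)) = |t|E(0)$ when $t \geq 0$, and $\delta_{|t|}(-E(0)) = -|t|E(0) = tE(0)$ when $t \leq 0$; in either case the dilation property $d(\delta_r y) = r\, d(y)$ combined with the first bullet (applied to $E$ or to $-E$, noting $\omega(-E) = \omega(E)$) yields $d(tE(0)) = |t|\omega(E)$, which is the claimed formula (with $t$ understood as $|t|$, or restricted to $t \geq 0$).

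There is essentially no obstacle here; the only point that requires a little care is handling the sign of $t$, since dilations only make sense for positive parameters, so one must split into the cases $t \geq 0$ and $t \leq 0$ and invoke the first bullet for $\pm E$.
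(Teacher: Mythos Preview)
Your proof is correct and follows essentially the same approach as the paper: both establish $|E(0)|=\omega(E)$ via the coordinate description of $E(0)$, bound $d(E(0))$ above by the straight horizontal segment and below by $|p(\cdot)|$, and reduce the second bullet to the first via \eqref{linestolines} and left-invariance. You are in fact more careful than the paper about the sign of $t$; the paper's statement and proof tacitly assume $t\geq 0$ (or should read $|t|\omega(E)$), exactly as you note.
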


\begin{proof}
The first equality is true because $\omega(E)=|p(E)|$, as noted earlier, and $|p(E)|=|E(0)|$, since $E(0)\in \mathbb{R}^{2n+1}$ has final coordinate equal to $0$.

The inequality $d(E(0))\leq \omega(E)$ is trivial since $t\mapsto tE(0)$, $t\in [0,1]$, is a horizontal curve joining $0$ to $E(0)$ of horizontal length exactly $\omega(E)$. 

Suppose $\gamma$ is a horizontal curve joining $0$ and $E(0)$. Then $L_{\mathbb{H}}(\gamma) = L_{\mathbb{E}} (p\circ \gamma)$ as noted in the discussion after Definition \ref{horizontalcurve}. Since $p\circ \gamma$ is a curve joining $0$ and $p(E)$ we deduce:
\[L_{\mathbb{E}}(p\circ \gamma)\geq |p(E)|=\omega(E).\]
This holds for any horizontal curve $\gamma$ joining $0$ to $E(0)$, so $d(E(0))\geq \omega(E)$. 

For the final statement we calculate as follows:
\begin{align*}
d(x,x+tE(x)) &= d(x,x(tE(0)))\\
&=d(tE(0))\\
&=t\omega(E).
\end{align*}
\end{proof}

If $f\colon \mathbb{H}^{n} \to \mathbb{R}$ or $\gamma\colon \mathbb{R} \to \mathbb{H}^{n}$ we denote the Lipschitz constant (not necessarily finite) of $f$ or $\gamma$ with respect to $d$ (in the domain or target respectively) by $\mathrm{Lip}_{\mathbb{H}}(f)$ and $\mathrm{Lip}_{\mathbb{H}}(\gamma)$. If we use the Euclidean distance then we use the notation $\mathrm{Lip}_{\mathbb{E}}(f)$ and $\mathrm{Lip}_{\mathbb{E}}(\gamma)$. Throughout this article `Lipschitz' means with respect to the Carnot-Carath\'{e}odory distance if the domain or target is $\mathbb{H}^n$, unless otherwise stated. For horizontal curves we have the following relation between Lipschitz constants.

\begin{lemma}\label{lipschitzhorizontal}
Suppose $\gamma \colon I \to \mathbb{H}^{n}$ is a horizontal curve. Then:
\[\mathrm{Lip}_{\mathbb{H}}(\gamma) =  \mathrm{Lip}_{\mathbb{E}}(p \circ \gamma).\]
\end{lemma}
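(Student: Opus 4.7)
The plan is to prove the equality by establishing both inequalities separately, relying only on facts already stated: that horizontal length equals Euclidean length of the projection ($L_{\mathbb{H}}(\gamma) = L_{\mathbb{E}}(p \circ \gamma)$), that the Carnot-Carath\'{e}odory distance is infimal horizontal length, and the observation (made just after Proposition \ref{euclideanheisenberg}) that $d(x,y) \geq |p(y) - p(x)|$. Write $L := \mathrm{Lip}_{\mathbb{E}}(p \circ \gamma)$ and $M := \mathrm{Lip}_{\mathbb{H}}(\gamma)$.

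For the direction $M \leq L$, I would fix $s, t \in I$ with $s < t$ and use that the restriction $\gamma|_{[s,t]}$ is itself a horizontal curve joining $\gamma(s)$ to $\gamma(t)$. Definition \ref{carnotdistance} then gives
\[
d(\gamma(s), \gamma(t)) \leq L_{\mathbb{H}}(\gamma|_{[s,t]}) = L_{\mathbb{E}}((p\circ\gamma)|_{[s,t]}),
\]
using the identity noted after Definition \ref{horizontalcurve}. Because $p \circ \gamma \colon I \to \mathbb{R}^{2n}$ is $L$-Lipschitz in the Euclidean sense, its Euclidean length on $[s,t]$ is at most $L(t-s)$ (the standard one-line fact that the length of a Lipschitz curve is bounded by its Lipschitz constant times the parameter length, seen by integrating $|(p\circ\gamma)'| \leq L$). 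Dividing by $t - s$ and taking the supremum over all pairs $s \neq t$ yields $M \leq L$.

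For the direction $L \leq M$, the point is that projection onto $\mathbb{R}^{2n}$ is 1-Lipschitz from $(\mathbb{H}^n, d)$ to $(\mathbb{R}^{2n}, |\cdot|)$. Precisely, for $s, t \in I$,
\[
|p(\gamma(s)) - p(\gamma(t))| \leq d(\gamma(s), \gamma(t)) \leq M|s - t|,
\]
where the first inequality is the observation recalled above and the second is the definition of $M$. Dividing by $|s - t|$ and taking a supremum gives $L \leq M$.

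There is no real obstacle here; the only point requiring any care is the standard estimate $L_{\mathbb{E}}(\eta|_{[s,t]}) \leq \mathrm{Lip}_{\mathbb{E}}(\eta)(t-s)$ used in the first direction, and the symmetric fact that projection to $\mathbb{R}^{2n}$ does not increase Carnot-Carath\'{e}odory distance, used in the second. Both are routine and immediate from the definitions already in the paper.
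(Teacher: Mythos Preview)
Your proof is correct and follows essentially the same approach as the paper: one inequality from the fact that $\gamma|_{[s,t]}$ is an admissible horizontal curve joining $\gamma(s)$ to $\gamma(t)$ together with $L_{\mathbb{H}}(\gamma)=L_{\mathbb{E}}(p\circ\gamma)$, and the other from the $1$-Lipschitz property of the projection $p$. The only (harmless) slip is the location of the inequality $d(x,y)\geq |p(y)-p(x)|$, which in the paper is recorded just after Definition~\ref{carnotdistance}, not after Proposition~\ref{euclideanheisenberg}.
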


\begin{proof}
Suppose $s, t\in I$ with $s<t$. Then:
\[|p(\gamma(t))-p(\gamma(s))| \leq d(\gamma(t),\gamma(s)) \leq \mathrm{Lip}_{\mathbb{H}}(\gamma)|t-s|.\]
Hence $\mathrm{Lip}_{\mathbb{E}}(p \circ \gamma) \leq \mathrm{Lip}_{\mathbb{H}}(\gamma)$. For the opposite inequality, notice $\gamma|_{[s,t]}$ is a horizontal curve joining $\gamma(s)$ to $\gamma(t)$. Since $|(p\circ \gamma)'|\leq \mathrm{Lip}_{\mathbb{E}}(p\circ \gamma)$, we can estimate as follows:
\begin{align*}
d(\gamma(s),\gamma(t)) &\leq L_{\mathbb{H}}(\gamma |_{[s,t]})\\
&= \int_{s}^{t} |(p\circ \gamma)'|\\
&\leq \mathrm{Lip}_{\mathbb{E}}(p\circ \gamma)(t-s).
\end{align*}
Hence $\mathrm{Lip}_{\mathbb{H}}(\gamma) \leq \mathrm{Lip}_{\mathbb{E}}(p \circ \gamma)$, which proves the lemma.
\end{proof}

Lebesgue measure $\mathcal{L}^{2n+1}$ is the natural Haar measure on $\mathbb{H}^{n}$. It is compatible with group translations and dilations \cite[page 44]{BLU07} - for every $g\in \mathbb{H}^{n}$, $r>0$ and $A\subset \mathbb{H}^{n}$:
\begin{itemize}
\item $\mathcal{L}^{2n+1}(\{gx\colon x\in A\})=\mathcal{L}^{2n+1}(A)$,
\item $\mathcal{L}^{2n+1}(\delta_{r}(A))=r^{2n+2}\mathcal{L}^{2n+1}(A)$.
\end{itemize}

We have defined group translations, dilations, distance and measure on $\mathbb{H}^n$. We can now introduce Pansu differentiability and state Pansu's theorem \cite{Pan89}. Intuitively, one replaces Euclidean objects by the corresponding ones in $\mathbb{H}^n$.

\begin{definition}\label{pansudifferentiability}
A function $L\colon \mathbb{H}^{n}\to \mathbb{R}$ is $\mathbb{H}$-linear if $L(xy)=L(x)+L(y)$ and $L(\delta_{r}(x))=rL(x)$ for all $x, y\in \mathbb{H}^{n}$ and $r>0$.

Let $f\colon \mathbb{H}^{n}\to \mathbb{R}$ and $x\in \mathbb{H}^{n}$. We say that $f$ is Pansu differentiable at $x$ if there is a $\mathbb{H}$-linear map $L \colon \mathbb{H}^{n}\to \mathbb{R}$ such that:
\[\lim_{y \to x} \frac{|f(y)-f(x)-L(x^{-1}y)|}{d(x,y)}=0.\]
In this case we say that $L$ is the Pansu derivative of $f$.
\end{definition}

Clearly a $\mathbb{H}$-linear map is Pansu differentiable at every point. Pansu's theorem is the natural version of Rademacher's theorem in $\mathbb{H}^{n}$.

\begin{theorem}[Pansu]
A Lipschitz function $f\colon \mathbb{H}^{n} \to \mathbb{R}$ is Pansu differentiable Lebesgue almost everywhere.
\end{theorem}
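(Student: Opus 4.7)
The plan is to follow the classical two-step reduction: first produce horizontal directional derivatives Lebesgue-a.e.\ by a Fubini argument; second, upgrade this pointwise information to full Pansu differentiability by a blow-up.

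For the first step I would observe that the integral curves of $X_i$ and $Y_i$ are Euclidean straight lines: from $X_i(a,b,c)=e_i+2b_ie_{2n+1}$ the flow starting at $(a,b,c)$ is $t\mapsto(a+te_i,\,b,\,c+2tb_i)$, and analogously for $Y_i$. By equation \eqref{linestolines} and Lemma \ref{horizontaldistances}, each such segment has $d$-length equal to its Euclidean parameter, so $f$ restricted to any integral curve is $\mathrm{Lip}_{\mathbb{H}}(f)$-Lipschitz in $t$. The classical one-dimensional Rademacher theorem then gives differentiability on each line, and Fubini, applied in the global linear coordinates that trivialise the $X_i$-foliation of $\mathbb{R}^{2n+1}$ (and separately the $Y_i$-foliation), promotes this to existence of the horizontal directional derivatives $X_if(x)$ and $Y_if(x)$ at Lebesgue-almost every $x\in\mathbb{H}^n$.

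At such a point $x$ define the candidate Pansu derivative by
\[L(a,b,c) := \sum_{i=1}^n \bigl(a_iX_if(x) + b_iY_if(x)\bigr).\]
A direct check against Definition \ref{Heisenberg} shows $L$ is $\mathbb{H}$-linear: the correction term $-2(\langle a,b'\rangle-\langle b,a'\rangle)$ in the group law lies only in the $e_{2n+1}$ component, on which $L$ vanishes, giving $L(xy)=L(x)+L(y)$; and $L(\delta_rx)=rL(x)$ is immediate since $L$ is Euclidean-linear in the first $2n$ coordinates. I would further restrict $x$ to the full-measure subset of Lebesgue points of the horizontal gradient $(X_1f,\dots,Y_nf)$ with respect to averages over $d$-balls $B_{\mathbb{H}}(x,r)$; this is permitted because Lebesgue measure is doubling for $d$, by left-invariance of $d$ together with $\mathcal{L}^{2n+1}(\delta_rA)=r^{2n+2}\mathcal{L}^{2n+1}(A)$.

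For the blow-up, consider $f_r(y):=(f(x\delta_r(y))-f(x))/r$. Left-invariance and $1$-homogeneity of $d$ give $\mathrm{Lip}_{\mathbb{H}}(f_r)\le\mathrm{Lip}_{\mathbb{H}}(f)$ uniformly in $r$, so Arzelà-Ascoli on $d$-compacta extracts uniform limits $g$ along any sequence $r_k\to 0$. Pansu differentiability at $x$ amounts to showing $g=L$ independently of subsequence, so that $f_r\to L$ uniformly on $d$-compact sets, which by Definition \ref{pansudifferentiability} is precisely what is required. To identify $g$, I would integrate along a fixed horizontal curve $\sigma\colon[0,1]\to\mathbb{H}^n$ with $\sigma(0)=0$ and control $h$: using that $X_i,Y_i$ are left-invariant and that $\delta_r$ acts compatibly with horizontality, the curve $t\mapsto x\delta_r\sigma(t)$ is itself horizontal with control $rh$, so whenever the horizontal derivatives of $f$ are defined along the path,
\[f_r(\sigma(1)) = \int_0^1\sum_{i=1}^n\bigl[h_i(t)X_if(x\delta_r\sigma(t)) + h_{n+i}(t)Y_if(x\delta_r\sigma(t))\bigr]\,dt.\]
The hardest step, and the main obstacle, will be passing to the limit $r\to 0$ on the right-hand side: a Fubini argument on thin horizontal tubes around $\sigma$, combined with the Lebesgue-point property of the horizontal gradient at $x$, should force the integral to converge to $\int_0^1 L(\sigma'(t))\,dt$, which by Euclidean linearity of $L$ equals $L(\sigma(1))$. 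Since Chow's theorem places every $y\in\mathbb{H}^n$ on such a horizontal curve from $0$, this identifies $g=L$ identically, completing the proof.
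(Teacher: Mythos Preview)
The paper does not prove this statement; it is quoted as Pansu's theorem with a reference to \cite{Pan89} and serves only as background for the main result. There is therefore no proof in the paper to compare against.

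On its own merits, your proposal follows one of the standard routes to Pansu's theorem (blow-up combined with the Lebesgue-point property of the horizontal gradient), and the first step together with the setup of the second step are correct. The genuine difficulty is exactly where you flag it. Your integral identity
\[f_r(\sigma(1)) = \int_0^1\sum_{i=1}^n\bigl(h_i(t)\,X_if(x\delta_r\sigma(t)) + h_{n+i}(t)\,Y_if(x\delta_r\sigma(t))\bigr)\,dt\]
is not justified as written: the pointwise horizontal derivatives $X_if,Y_if$ exist only Lebesgue-a.e., and a single horizontal curve is a null set, so there is no reason they are defined along $x\delta_r\sigma$. What \emph{is} automatic is that $(f\circ(x\delta_r\sigma))'(t)$ exists for a.e.\ $t$, but decomposing this directional derivative as a linear combination of the separate $X_if,Y_if$ at that point is precisely the step that requires upgrading a.e.\ information to information along curves. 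A cleaner way to close the argument avoids fixed curves: from the Lebesgue-point hypothesis one obtains $\nabla_H f_r \to (X_1f(x),\dots,Y_nf(x))$ in $L^1_{\mathrm{loc}}$, so every uniform subsequential limit $g$ of $(f_r)$ has constant distributional horizontal gradient equal to that of $L$; since the horizontal fields are bracket-generating, $g-L$ is constant, and $g(0)=L(0)=0$ forces $g=L$. Your ``thin horizontal tubes'' idea can be made to work, but it effectively reproves this $L^1_{\mathrm{loc}}$ convergence and would have to be carried out in detail to close the gap.
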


We can now state formally our main result.

\begin{theorem}\label{maintheorem}
There is a Lebesgue measure zero set $N\subset \mathbb{H}^n$ such that every Lipschitz function $f\colon \mathbb{H}^{n} \to \mathbb{R}$ is Pansu differentiable at a point of $N$.
\end{theorem}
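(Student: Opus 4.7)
The plan is to follow the roadmap sketched in the introduction, which is an adaptation to $\mathbb{H}^n$ of the Preiss--Dore--Maleva strategy for constructing small universal differentiability sets in $\mathbb{R}^n$. The argument has three conceptual pieces: (a) choosing a suitable null set $N$ that is rich enough in horizontal curves, (b) showing that almost maximality of a horizontal directional derivative at a point of $N$ implies Pansu differentiability there (Theorem \ref{almostmaximalityimpliesdifferentiability}), and (c) constructing, for any prescribed Lipschitz $f_0$, a perturbation $f = f_0 + L$ by an $\mathbb{H}$-linear map $L$ which attains such an almost maximal horizontal directional derivative at some point of $N$ (Proposition \ref{DoreMaleva}). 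Combining (b) and (c) gives Pansu differentiability of $f$ at a point of $N$; since $L$ is $\mathbb{H}$-linear and hence everywhere Pansu differentiable, $f_0 = f - L$ inherits Pansu differentiability at that point, which is exactly Theorem \ref{maintheorem}.

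First, I would fix $N$ once and for all as any Lebesgue null $G_\delta$ subset of $\mathbb{H}^n$ containing every horizontal line joining two points of $\mathbb{Q}^{2n+1}$. Such an $N$ exists since each horizontal line is Lebesgue null in $\mathbb{R}^{2n+1}$ and there are only countably many such lines; the $G_\delta$ property will matter later to ensure that a limit point stays inside $N$. The reason for including all these horizontal lines is that the construction in Section \ref{nullset} then produces, inside $N$, short horizontal curves that agree with a given horizontal line off a small parameter interval but pass through a prescribed nearby point, with controlled length and controlled Lipschitz constant. These perturbation curves are the geometric backbone of the contradiction argument.

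Next, to prove Theorem \ref{almostmaximalityimpliesdifferentiability}, I would argue by contradiction. Assume $x \in N$, $E$ is a horizontal direction, and $Ef(x)$ is ``almost maximal'' in the precise quantitative sense, but $f$ is not Pansu differentiable at $x$ with the candidate $\mathbb{H}$-linear derivative determined by $E$. Pansu non-differentiability yields a point $y$ with $d(x,y)$ small where the discrepancy $|f(y) - f(x) - L(x^{-1}y)|$ is too large relative to $d(x,y)$. Using the horizontal perturbation lemma from Section \ref{nullset}, I would replace a piece of the horizontal line through $x$ in direction $E$ by a horizontal curve inside $N$ passing through $y$, keeping the length and direction perturbation small and controlled. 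A mean value type estimate along this curve then produces some $x' \in N$ and horizontal direction $E'$, both close to $x$ and $E$ respectively, such that $E'f(x')$ exceeds $Ef(x)$ by more than the almost maximality slack allows, giving the required contradiction. New ingredients compared to the Banach space argument are the restriction to horizontal directions, the careful length and direction estimate for the auxiliary horizontal curve, and the systematic use of Proposition \ref{euclideanheisenberg} together with \eqref{squarerootinequality} to pass between Euclidean and Carnot--Carath\'eodory quantities.

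Finally, to prove Proposition \ref{DoreMaleva}, I would follow the iterative construction of Dore--Maleva. Starting from $f_0$, inductively produce sequences $x_k \in N$, horizontal directions $E_k$ with $\omega(E_k) = 1$, and $\mathbb{H}$-linear perturbations $L_k$ with $f_k = f_0 + L_k$, by maximizing $E f_k(x)$ over a shrinking constraint set of admissible points $x$ and directions $E$; at each step add a tiny $\mathbb{H}$-linear correction $L_{k+1} - L_k$ that boosts directional derivatives in directions close to $E_k$ so that the next maximizer is even closer to being globally almost maximal. Choosing the corrections to be summably small ensures $L_k \to L$, hence $f_k \to f = f_0 + L$ uniformly on compacts; the constraint sets, chosen compact and nested, force $x_k \to x_\ast \in N$ (using the $G_\delta$ structure of $N$) and $E_k \to E_\ast$; and the limiting directional derivative $E_\ast f(x_\ast)$ exists and is almost maximal in the precise sense required by Theorem \ref{almostmaximalityimpliesdifferentiability}.

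The main obstacle I expect is step (b): carrying out the contradiction argument for Theorem \ref{almostmaximalityimpliesdifferentiability} in a way that the auxiliary perturbation curve is simultaneously (i) horizontal, (ii) contained in $N$, (iii) passes through the prescribed bad point $y$, and (iv) has both its total length and its average horizontal direction quantitatively close to those of the unperturbed horizontal line. Because the Carnot--Carath\'eodory distance is only H\"older equivalent to the Euclidean distance, small Euclidean modifications can blow up in the CC metric and vice versa, so the error bookkeeping has to be done throughout in the correct metric. This is where the geometry of $\mathbb{H}^n$ forces genuinely new estimates beyond the Banach/Euclidean case, and where the precise form of the almost maximality condition has to be calibrated to match the losses incurred by the horizontal perturbation.
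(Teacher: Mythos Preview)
Your proposal is correct and follows essentially the same approach as the paper: fix $N$ as in Lemma \ref{uds}, combine Theorem \ref{almostmaximalityimpliesdifferentiability} (almost maximality implies Pansu differentiability) with Proposition \ref{DoreMaleva} (existence of an almost maximal horizontal directional derivative for a suitable $\mathbb{H}$-linear perturbation of $f_0$), and conclude by noting that $f_0 = f - L$ is Pansu differentiable wherever $f$ is since $L$ is $\mathbb{H}$-linear. Your identification of the main technical obstacle---constructing an auxiliary horizontal curve inside $N$ through the ``bad'' point with controlled Lipschitz constant and direction in the Carnot--Carath\'eodory metric---is exactly the content of Lemma \ref{newcurveg} and Lemma \ref{closedirectioncloseposition}, which drive the contradiction in the proof of Theorem \ref{almostmaximalityimpliesdifferentiability}.
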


\section{Maximality of directional derivatives implies Pansu differentiability}\label{strongmaximal}
We begin this section by defining directional derivatives in horizontal directions and comparing them to the Lipschitz constant (Lemma \ref{lipismaximal}). We then construct simple horizontal curves connecting the origin to other points and estimate their horizontal length and direction (Lemma \ref{goodcurve}). We use these curves to investigate Pansu differentiability of the Carnot-Carath\'{e}odory distance (Lemma \ref{differentiabilityofdistance}). Finally we prove that existence of a maximal horizontal directional derivative implies Pansu differentiability (Theorem \ref{maximalityimpliesdifferentiability}).

Proposition \ref{euclideanheisenberg} implies that curves in $\mathbb{H}^{n}$ which are Lipschitz with respect to the Carnot-Carath\'{e}odory distance are locally Lipschitz with respect to the Euclidean distance. Hence they are differentiable (in the usual sense) almost everywhere. We now show that derivatives of Lipschitz functions in horizontal directions can be defined by composing with any Lipschitz curve with tangent of the correct direction. Throughout this article $C$ will denote a constant that may change from line to line but remains bounded.
 
\begin{lemma}\label{welldefined}
Let $g, h\colon I \to \mathbb{H}^{n}$ be Lipschitz horizontal curves which are differentiable at $c\in I$ with $g(c)=h(c)$ and $g'(c)=h'(c)$. Let $f\colon \mathbb{H}^{n}\to \mathbb{R}$ be Lipschitz. If $(f\circ g)'(c)$ exists then $(f\circ h)'(c)$ exists and $(f\circ g)'(c)=(f\circ h)'(c)$.
\end{lemma}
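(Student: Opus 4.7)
The plan is to reduce the lemma to the quantitative estimate $d(g(t), h(t)) = o(|t - c|)$, and to obtain that estimate by exploiting horizontality together with an integration-by-parts argument. First, I replace $(g, h, f)$ by $(g(c)^{-1} g,\, g(c)^{-1} h,\, y \mapsto f(g(c) y))$ and shift the parameter so that $c = 0$ and $g(0) = h(0) = 0$. Left-translation preserves horizontality, the Carnot--Carath\'{e}odory distance, and Lipschitz constants, and this substitution leaves the derivatives $(f \circ g)'(c)$ and $(f \circ h)'(c)$ unchanged. Under the reduction, the common derivative $v := g'(0) = h'(0)$ satisfies $v_{2n+1} = 0$: Lemma \ref{lift} together with $g(0) = 0$ and Euclidean-boundedness of $g'$ (which follows from Proposition \ref{euclideanheisenberg}) shows that $g_{2n+1}(t)$ is the integral on $[0,t]$ of an $O(s)$ function, hence $g_{2n+1}(t) = O(t^2)$ and therefore $v_{2n+1} = 0$.

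The lemma is now an immediate consequence of the estimate
\[
d(g(t), h(t)) = o(|t|) \quad \text{as } t \to 0.
\]
Granting this, Lipschitzness of $f$ gives
\[
\left| \frac{f(h(t)) - f(h(0))}{t} - \frac{f(g(t)) - f(g(0))}{t} \right| \leq \mathrm{Lip}_{\mathbb{H}}(f) \cdot \frac{d(g(t), h(t))}{|t|} \to 0,
\]
so $(f \circ h)'(0)$ exists and equals $(f \circ g)'(0)$.

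To prove the distance estimate I pass to the Koranyi distance $d_K$, which is Lipschitz-equivalent to $d$. Writing $g(t) = (a_1, b_1, c_1)$, $h(t) = (a_2, b_2, c_2)$, the group law gives
\[
g(t)^{-1} h(t) = \bigl(a_2 - a_1,\, b_2 - b_1,\, (c_2 - c_1) + 2\langle a_1, b_2 - b_1 \rangle - 2\langle b_1, a_2 - a_1 \rangle\bigr).
\]
Since $g$ and $h$ share derivative $v$ at $0$, one has $(a_2 - a_1, b_2 - b_1) = o(|t|)$ while $a_1, b_1 = O(|t|)$, so the cross-term is $O(|t|) \cdot o(|t|) = o(t^2)$. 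The crucial step is showing the vertical discrepancy $c_2 - c_1 = g_{2n+1}(t) - h_{2n+1}(t)$ is $o(t^2)$. Using Lemma \ref{lift} and writing $g_i(s) = s v_i + r_i^g(s)$, $h_i(s) = s v_i + r_i^h(s)$ with $r_i^g, r_i^h = o(s)$, a direct expansion rewrites the integrand of $g_{2n+1}(t) - h_{2n+1}(t)$ as $2s \sum_i \bigl(v_{n+i}(g_i' - h_i') - v_i(g_{n+i}' - h_{n+i}')\bigr) + o(s)$. Integration by parts,
\[
\int_0^t s (g_i' - h_i')(s)\, ds = t(g_i(t) - h_i(t)) - \int_0^t (g_i - h_i)(s)\, ds = t \cdot o(t) - o(t^2) = o(t^2),
\]
kills the polynomial part, and $\int_0^t o(s)\, ds = o(t^2)$ handles the remainder; hence $c_2 - c_1 = o(t^2)$. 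Substituting into the Koranyi formula yields $\|g(t)^{-1} h(t)\|_K^4 = o(t^4) + o(t^4) = o(t^4)$, so $d(g(t), h(t)) = o(|t|)$ as required.

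The main obstacle is exactly this sharp $o(t^2)$ bound on $c_2 - c_1$: differentiability of $g$ and $h$ at $0$ alone gives only $o(|t|)$ in the last coordinate, which via the Koranyi formula would yield only $d = o(|t|^{1/2})$, not the $o(|t|)$ we need. The extra power of $t$ is extracted by combining horizontality (which expresses the vertical coordinate as a symplectic-area integral of the projection) with integration by parts that transfers a derivative off the bounded quantity $\Delta_i' = g_i' - h_i'$ onto the small quantity $\Delta_i = g_i - h_i = o(s)$.
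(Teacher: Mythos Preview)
Your proof is correct and follows essentially the same strategy as the paper: reduce to $c=0$ and $g(0)=h(0)=0$, then show $d(g(t),h(t))=o(|t|)$ via the Koranyi distance, with the crux being the $o(t^2)$ bound on the vertical discrepancy extracted from horizontality (Lemma~\ref{lift}). The only technical difference lies in that vertical estimate: the paper bounds the integrand for $h_{2n+1}-g_{2n+1}$ directly as a product of a bounded derivative factor with an $o(s)$ difference factor and integrates, whereas you isolate the leading contribution $2s\sum_i\bigl(v_{n+i}\Delta_i'-v_i\Delta_{n+i}'\bigr)$ and dispatch it by integration by parts---slightly different bookkeeping that reaches the same $o(t^2)$ conclusion.
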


\begin{proof}
For convenience assume that $c=0$ and $I$ is a neighbourhood of $0$. Using left group translations we may also assume that $g(0)=h(0)=0$. Since $f$ is Lipschitz, to prove the lemma it suffices to check that $d(g(t),h(t))/t\to 0$ as $t\to 0$. For this we use the Koranyi distance \eqref{Koranyi}. An easy calculation gives:
\begin{align*}
d_{K}(g(t),h(t)) & \leq C |h(t)-g(t)|\\
& \quad + C\Big| h_{2n+1}(t) - g_{2n+1}(t) + 2\sum_{i=1}^{n} (g_{i}(t)h_{n+i}(t) - g_{n+i}(t)h_{i}(t)) \Big|^{\frac{1}{2}}.
\end{align*}
Since $g(0)=h(0)=0$ and $g'(0)=h'(0)$ we see:
\begin{equation}\label{quotientstozero}
|h(t)-g(t)|/t \to |h'(0)-g'(0)|=0 \qquad \mbox{ as } t\to 0.
\end{equation}
Similarly for each $1\leq i \leq n$:
\[ (g_{i}(t)h_{n+i}(t) - g_{n+i}(t)h_{i}(t))/t^{2} \to g_{i}'(0)h_{n+i}'(0)-g_{n+i}'(0)h_{i}'(0)=0.\]
Hence it suffices to show that $(h_{2n+1}(t)-g_{2n+1}(t))/t^{2}\to 0$ as $t\to 0$. Since $h$ and $g$ are Lipschitz we use Lemma \ref{lipschitzhorizontal} to see that for $1\leq i\leq 2n$ and some constant $C$:
\[|h_{i}'(s)-g_{i}'(s)|\leq \mathrm{Lip}_{\mathbb{E}}(p\circ h - p \circ g)= \mathrm{Lip}_{\mathbb{H}}(h-g)\leq C.\]
Suppose without loss of generality that $t>0$. We use Lemma \ref{lift} to estimate as follows:
\begin{align*}
\frac{|h_{2n+1}(t)-g_{2n+1}(t)|}{t^2} & \leq \frac{2}{t^2}\sum_{i=1}^{n} \int_{0}^{t} |(h_{i}'-g_{i}')(h_{n+i}-g_{n+i}) - (h_{n+i}'-g_{n+i}')(h_{i}-g_{i})| \\
& \leq \frac{C}{t^2}\sum_{i=1}^{n} \int_{0}^{t} (|h_{n+i}-g_{n+i}|+|h_{i}-g_{i}|).
\end{align*}
Let $\varepsilon>0$. By \eqref{quotientstozero} we know $|h(s)-g(s)|\leq \varepsilon s$ for sufficiently small $s>0$. Hence for sufficiently small $t>0$:
\[ \frac{|h_{2n+1}(t)-g_{2n+1}(t)|}{t^2} \leq \frac{C}{t^2}\sum_{i=1}^{n} \int_{0}^{t} 2\varepsilon s \dd s \leq C \varepsilon,\]
for a new constant $C$ depending on $n$. This shows $(h_{2n+1}(t)-g_{2n+1}(t))/t^{2}\to 0$ as $t\to 0$, so concludes the proof.
\end{proof}

\begin{definition}\label{defdirectionalderivative}
Let $f\colon \mathbb{H}^{n}\to \mathbb{R}$ be a Lipschitz function and $E\in V$. Define $Ef(x):=(f\circ \gamma)'(t)$ whenever it exists, where $\gamma$ is any Lipschitz horizontal curve with $\gamma(t)=x$ and $\gamma'(t)=E(x)$.
\end{definition}

Notice that Lemma \ref{welldefined} implies Definition \ref{defdirectionalderivative} makes sense. Suppose $f$ is a Lipschitz function and $\gamma$ is a Lipschitz horizontal curve. Then $f\circ \gamma \colon \mathbb{R}\to \mathbb{R}$ is Lipschitz, so differentiable almost everywhere. Hence Lipschitz functions have many directional derivatives in horizontal directions. We often use horizontal lines $\gamma(t)=x+tE(x)$ to calculate directional derivatives when they exist:
\[Ef(x)=\lim_{t\to 0} \frac{f(x+tE(x))-f(x)}{t}.\]

Lipschitz constants with respect to the Euclidean distance are given by the supremum of directional derivatives over directions of Euclidean length $1$. We now prove an analogue of this for the Carnot-Carath\'{e}odory distance.

\begin{lemma}\label{lipismaximal}
Suppose $f\colon \mathbb{H}^n\to \mathbb{R}$ is Lipschitz. Then:
\[\mathrm{Lip}_{\mathbb{H}}(f)=\sup\{|Ef(x)| \colon x\in \mathbb{H}^{n}, \, E\in V, \, \omega(E)=1, \, Ef(x) \mbox{ exists}\}.\]
\end{lemma}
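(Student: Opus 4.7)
The plan is to prove both inequalities separately, with the nontrivial direction being $\mathrm{Lip}_{\mathbb{H}}(f) \leq S$ where $S$ denotes the supremum on the right-hand side.

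The easy direction is $S \leq \mathrm{Lip}_{\mathbb{H}}(f)$. If $E \in V$ with $\omega(E) = 1$ and $Ef(x)$ exists, we use the horizontal line $\gamma(t) = x + tE(x)$, which by Lemma \ref{horizontaldistances} satisfies $d(x, x + tE(x)) = |t|$. Then
\[|Ef(x)| = \lim_{t \to 0}\frac{|f(x + tE(x)) - f(x)|}{|t|} = \lim_{t \to 0}\frac{|f(x + tE(x)) - f(x)|}{d(x, x+tE(x))} \leq \mathrm{Lip}_{\mathbb{H}}(f).\]
Taking the supremum over admissible $x$ and $E$ gives $S \leq \mathrm{Lip}_{\mathbb{H}}(f)$.

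For the reverse inequality, fix $x, y \in \mathbb{H}^n$ with $x \neq y$. Since geodesics exist in $\mathbb{H}^n$, choose a horizontal curve $\gamma \colon [0, T] \to \mathbb{H}^n$ joining $x$ to $y$ with $T = L_{\mathbb{H}}(\gamma) = d(x, y)$, parametrized by arc length so that the associated $h \colon [0,T] \to \mathbb{R}^{2n}$ in Definition \ref{horizontalcurve} satisfies $|h(t)| = 1$ for a.e.\ $t$. By Lemma \ref{lipschitzhorizontal}, $\mathrm{Lip}_{\mathbb{H}}(\gamma) = \mathrm{Lip}_{\mathbb{E}}(p\circ \gamma) = 1$, so $f \circ \gamma \colon [0, T] \to \mathbb{R}$ is Lipschitz with constant $\mathrm{Lip}_{\mathbb{H}}(f)$ and hence differentiable a.e. Also, by Proposition \ref{euclideanheisenberg} applied to the compact image of $\gamma$, the curve $\gamma$ is Euclidean Lipschitz and thus differentiable a.e. in the classical sense. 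At a.e.\ $t_0 \in [0, T]$ where both $(f \circ \gamma)'(t_0)$ and $\gamma'(t_0)$ exist with $|h(t_0)| = 1$, define the constant horizontal vector field
\[E_{t_0} = \sum_{i=1}^n \bigl(h_i(t_0) X_i + h_{n+i}(t_0) Y_i\bigr) \in V, \qquad \omega(E_{t_0}) = |h(t_0)| = 1,\]
so that $\gamma'(t_0) = E_{t_0}(\gamma(t_0))$. Comparing $\gamma$ near $t_0$ with the horizontal line $s \mapsto \gamma(t_0) + (s - t_0) E_{t_0}(\gamma(t_0))$ using Lemma \ref{welldefined} shows that $E_{t_0} f(\gamma(t_0))$ exists and equals $(f\circ \gamma)'(t_0)$, so
\[|(f \circ \gamma)'(t_0)| = |E_{t_0}f(\gamma(t_0))| \leq S.\]

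Integrating along $\gamma$, we conclude
\[|f(y) - f(x)| = \Bigl|\int_0^T (f\circ\gamma)'(t)\,\mathrm{d}t\Bigr| \leq S \cdot T = S \cdot d(x, y),\]
so $\mathrm{Lip}_{\mathbb{H}}(f) \leq S$, finishing the proof. The main obstacle is the middle step: justifying that at a.e.\ $t_0$ the classical derivative $(f\circ\gamma)'(t_0)$ coincides with a horizontal directional derivative of unit norm. This requires simultaneously invoking classical differentiability of $\gamma$ (via the Hölder comparison in Proposition \ref{euclideanheisenberg}), the arc-length normalization $|h(t_0)| = 1$, and the curve-independence of the directional derivative from Lemma \ref{welldefined}.
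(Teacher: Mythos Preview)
Your proof is correct and follows essentially the same approach as the paper: both directions are handled identically, with the nontrivial inequality obtained by taking an arc-length parametrized horizontal curve (a geodesic) from $x$ to $y$, identifying $(f\circ\gamma)'(t)$ at almost every $t$ with a unit horizontal directional derivative via Lemma~\ref{welldefined} (equivalently Definition~\ref{defdirectionalderivative}), and integrating. The paper's write-up is slightly terser in that it appeals directly to Definition~\ref{defdirectionalderivative} rather than re-deriving the comparison with the horizontal line, but the content is the same.
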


\begin{proof}
Temporarily denote the right side of the above equality by $\mathrm{Lip}_{\mathrm{D}}(f)$. Fix $x, y\in \mathbb{H}^n$ and a Lipschitz horizontal curve $\gamma \colon [0,d(x,y)]\to \mathbb{H}^{n}$ joining $x$ to $y$ such that $|(p\circ \gamma)'(t)|=1$ for almost every $t$. Let $G$ be the set of $t\in [0,d(x,y)]$ for which:
\begin{itemize}
\item $(f\circ \gamma)'(t)$ exists,
\item $\gamma'(t)$ exists,
\item $\gamma'(t)\in \mathrm{Span}\{X_{i}(\gamma(t)), Y_{i}(\gamma(t))\colon 1\leq i \leq n\}$,
\item $|(p\circ \gamma)'(t)|=1$.
\end{itemize}
Since $\gamma$ is a horizontal curve and $f\circ \gamma$ is Lipschitz, we know that $G$ has full measure. We estimate as follows:
\begin{align*}
|f(x)-f(y)|&= \Big|\int_{0}^{d(x,y)} (f\circ \gamma)' \Big|\\
&\leq d(x,y)\sup \{ |(f\circ \gamma)'(t)|\colon t\in G\}\\
&\leq d(x,y)\mathrm{Lip}_{\mathrm{D}}(f).
\end{align*}
Here we used Definition \ref{defdirectionalderivative}: $\gamma'(t)\in \mathrm{Span}\{X_{i}(\gamma(t)), Y_{i}(\gamma(t))\colon 1\leq i \leq n\}$ implies that there exists $E\in V$ with $E(\gamma(t))=\gamma'(t)$, and $|(p\circ \gamma)'(t)|=1$ then implies $\omega(E)=1$. Hence $\mathrm{Lip}_{\mathbb{H}}(f)\leq \mathrm{Lip}_{\mathrm{D}}(f)$.

For the opposite inequality fix $x\in \mathbb{H}^{n}$ and $E\in V$ such that $\omega(E)=1$ and $Ef(x)$ exists. Use Lemma \ref{horizontaldistances} to estimate as follows:
\begin{align*}
|Ef(x)|&= \Big| \lim_{t\to 0} \frac{f(x+tE(x))-f(x)}{t} \Big|\\
&\leq \limsup_{t \to 0} \frac{\mathrm{Lip}_{\mathbb{H}}(f)d(x,x+tE(x))}{t}\\
&=\mathrm{Lip}_{\mathbb{H}}(f).
\end{align*}
Hence $\mathrm{Lip}_{\mathrm{D}}(f) \leq \mathrm{Lip}_{\mathbb{H}}(f)$ which concludes the proof.
\end{proof}

The Carnot-Carath\'{e}odory distance $d$ is invariant under left group translations. Hence to understand $d$ it suffices to understand $d(x)=d(x,0)$ for $x\in \mathbb{H}^{n}$. For this purpose we construct explicit Lipschitz horizontal curves joining $0$ to points $x\in \mathbb{H}^{n}$. Our curves are simple concatenations of straight lines, but their Lipschitz constants and directions are sufficiently controlled for our applications.

\begin{lemma}\label{goodcurve}
Suppose $y\in \mathbb{H}^{n}$ with $p(y)\neq 0$. Write $y=(a,b,c)$ with $a, b\in \mathbb{R}^{n}$ and $c\in \mathbb{R}$. Denote $L=|p(y)|$ and define $\gamma\colon [0,1] \to \mathbb{H}^{n}$ by:
\[\gamma(t)= \begin{cases} t\Big(a-\frac{bc}{L^2},b+\frac{ac}{L^2},0\Big) &\mbox{if } 0\leq t\leq 1/2,\\
\frac{1}{2}\Big(a-\frac{bc}{L^2},b+\frac{ac}{L^2},0\Big)+\Big(t-\frac{1}{2}\Big)\Big(a+\frac{bc}{L^2},b-\frac{ac}{L^2},2c\Big) &\mbox{if }1/2 < t \leq 1. \end{cases}\]
Then:
\begin{enumerate}
\item $\gamma$ is a Lipschitz horizontal curve joining $(0,0,0)\in \mathbb{H}^n$ to $y=(a,b,c)\in \mathbb{H}^n$,
\item $\mathrm{Lip}_{\mathbb{H}}(\gamma) \leq L\Big(1+\frac{c^2}{L^4}+\frac{4c^2}{L^2}\Big)^{\frac{1}{2}}$,
\item $\gamma'(t)$ exists and $|\gamma'(t)-(a,b,0)|\leq \frac{c}{L}(1+4L^{2})^{\frac{1}{2}}$ for $t\in [0,1]\setminus \{1/2\}$ .
\end{enumerate}
We denote such a curve $\gamma$ by $\gamma_{y}$.
\end{lemma}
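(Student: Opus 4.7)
The plan is to treat the three claims in order. Claim (1) breaks into a few routine checks (endpoints, continuity at the break point, and the horizontal lift condition) plus one genuine algebraic identity on the second segment; claims (2) and (3) then follow by computing the piecewise constant derivative $\gamma'(t)$ and taking Euclidean norms.

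For (1) the endpoint identities $\gamma(0)=0$ and $\gamma(1)=(a,b,c)=y$ are immediate from the formulas, and continuity at $t=1/2$ holds because both branches evaluate there to $\frac{1}{2}(a-bc/L^2,\,b+ac/L^2,\,0)$. To verify that $\gamma$ is horizontal I would apply Lemma \ref{lift} on each open subinterval. Setting $u=a-bc/L^2$, $v=b+ac/L^2$, $\tilde u=a+bc/L^2$, $\tilde v=b-ac/L^2$, the derivative is constant on each piece, namely $\gamma'(t)=(u,v,0)$ for $t\in(0,1/2)$ and $\gamma'(t)=(\tilde u,\tilde v,2c)$ for $t\in(1/2,1)$. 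On the first piece $\gamma_{2n+1}\equiv 0$ and the lifting identity reduces to $0=2t\sum_i(u_iv_i-v_iu_i)$, which is trivial. On the second piece the $(t-1/2)$ cross terms cancel by antisymmetry, and Lemma \ref{lift} reduces to the purely algebraic identity
\[2c=\sum_{i=1}^{n}(\tilde u_i v_i-\tilde v_i u_i).\]
Expanding the four products, the $a_ib_i$ contributions and the $a_ib_ic^2/L^4$ contributions cancel in pairs, leaving $(2c/L^2)(|a|^2+|b|^2)=2c$ by $L^2=|p(y)|^2$. This cancellation, built into the signs of the $bc/L^2$ and $ac/L^2$ corrections in the definition of $\gamma$, is the one step that is not automatic and constitutes the main obstacle of the proof.

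For (2) and (3) I would compute $\gamma'(t)-(a,b,0)$ explicitly on each piece. On the first piece it equals $(-bc/L^2,\,ac/L^2,\,0)$, with squared Euclidean norm $(|a|^2+|b|^2)c^2/L^4=c^2/L^2$. On the second piece it equals $(bc/L^2,\,-ac/L^2,\,2c)$, with squared norm $c^2/L^2+4c^2=(c^2/L^2)(1+4L^2)$; this dominates the first-piece estimate and yields (3). For (2), Lemma \ref{lipschitzhorizontal} gives $\mathrm{Lip}_{\mathbb{H}}(\gamma)=\mathrm{Lip}_{\mathbb{E}}(p\circ\gamma)\leq\sup_t|(p\circ\gamma)'(t)|\leq\sup_t|\gamma'(t)|$, and a direct expansion shows $|\gamma'(t)|^2\leq L^2+c^2/L^2+4c^2=L^2(1+c^2/L^4+4c^2/L^2)$ on each piece, which is exactly the stated bound.
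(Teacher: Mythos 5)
Your proposal is correct and follows essentially the same route as the paper: the same key algebraic identity (the cancellation that yields $2c$ on the second segment) and the same norm computations for (2) and (3), with the only cosmetic difference that you verify horizontality via the lift characterization of Lemma \ref{lift} while the paper exhibits the coefficients of $X_i,Y_i$ directly — these amount to the identical calculation.
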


\begin{proof}
Notice $\gamma(0)=0$ and $\gamma(1)=(a,b,c)$. Recall that for $1\leq i\leq n$:
\[X_{i}(a',b',c')=e_{i}+2b_{i}'e_{2n+1}, \qquad Y_{i}(a',b',c')=e_{i+n}-2a_{i}'e_{2n+1}.\]
For $t\in (0,1/2)$ clearly
\[\gamma'(t)=\Big(a-\frac{bc}{L^2},b+\frac{ac}{L^2},0\Big)\]
and an easy calculation shows that
\[  \gamma'(t) = \sum_{i=1}^{n} \Big(a_{i}-\frac{b_{i}c}{L^2}\Big)X_{i}(\gamma(t))+\Big(b_{i}+\frac{a_{i}c}{L^2}\Big)Y_{i}(\gamma(t)).\]
For $t\in (1/2,1)$ we have:
\[\gamma'(t)=\Big(a+\frac{bc}{L^2},b-\frac{ac}{L^2},2c\Big).\]
We verify that for $t\in (1/2,1)$:
\[\gamma'(t)= \sum_{i=1}^{n} \Big(a_{i}+\frac{b_{i}c}{L^2}\Big)X_{i}(\gamma(t))+\Big(b_{i}-\frac{a_{i}c}{L^2}\Big)Y_{i}(\gamma(t)).\]
Validity of this equality for the first $2n$ coordinates is clear. The final coordinate of the right side is given by:
\begin{align*}
&\sum_{i=1}^{n} \Big(a_{i}+\frac{b_{i}c}{L^2}\Big)(2\gamma_{n+i}(t)) + \Big(b_{i}-\frac{a_{i}c}{L^2}\Big)(-2\gamma_{i}(t))\\
&\qquad =\sum_{i=1}^{n} \Big(a_{i}+\frac{b_{i}c}{L^2}\Big) \Big( \Big(b_{i}+\frac{a_{i}c}{L^2}\Big) + (2t-1)\Big( b_{i}-\frac{a_{i}c}{L^2} \Big) \Big)\\
&\qquad \quad+ \Big( b_{i}-\frac{a_{i}c}{L^2} \Big) \Big( -\Big( a_{i}-\frac{b_{i}c}{L^2} \Big) - (2t-1)\Big( a_{i}+\frac{b_{i}c}{L^2} \Big) \Big)\\
&\qquad=2\sum_{i=1}^{n} \Big(a_{i}+\frac{b_{i}c}{L^2}\Big) \frac{a_{i}c}{L^2} + \Big( b_{i}-\frac{a_{i}c}{L^2} \Big) \frac{b_{i}c}{L^2}\\
&\qquad=2\sum_{i=1}^{n} \Big(\frac{a_{i}^2c}{L^2} + \frac{b_{i}^{2}c}{L^2} \Big)\\
&\qquad=2c.
\end{align*}
Hence $\gamma$ is a horizontal curve. Since $\gamma$ is Lipschitz with respect to the Euclidean distance, Lemma \ref{lipschitzhorizontal} implies that $\gamma$ is a Lipschitz horizontal curve. This proves (1).

A straightforward computation shows that:
\[|\gamma'(t)|= \begin{cases} L\Big(1+\frac{c^2}{L^4} \Big)^{\frac{1}{2}} &\mbox{if } 0\leq t< 1/2,\\
L\Big(1+\frac{c^2}{L^4}+\frac{4c^2}{L^2}\Big)^{\frac{1}{2}} &\mbox{if }1/2 < t \leq 1. \end{cases}\]
Using Lemma \ref{lipschitzhorizontal} this gives the desired estimate of the Lipschitz constant:
\[\mathrm{Lip}_{\mathbb{H}}(\gamma)\leq \mathrm{Lip}_{\mathbb{E}}(\gamma)\leq L\Big(1+\frac{c^2}{L^4}+\frac{4c^2}{L^2}\Big)^{\frac{1}{2}}.\]
This proves (2). The estimate in (3) is also straightforward.
\end{proof}

Next we will study the Carnot-Carath\'{e}odory distance near points of the form $u=E(0)$ for some $E\in V$.

\begin{lemma}\label{differentiabilityofdistance}
Fix $u_{1}, u_{2} \in \mathbb{R}^n$ not both zero and let $u=(u_{1},u_{2},0) \in \mathbb{H}^{n}$. Then:
\begin{enumerate}
\item $d(uz) \geq d(u)+ \langle z, u/d(u)\rangle$ for any $z\in \mathbb{H}^{n}$,
\item $d(uz)=d(u)+\langle z, u/d(u)\rangle+ o(d(z))$ as $z\to 0$. That is, the Pansu derivative of $d$ at u is $L(x):=\langle x , u/d(u) \rangle$.
\end{enumerate}
\end{lemma}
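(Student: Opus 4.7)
The plan is to prove (1) first by projecting to $\mathbb{R}^{2n}$, and then to combine (1) with an upper bound obtained from a concrete competing curve to establish (2). Write $\tilde u = (u_1, u_2) \in \mathbb{R}^{2n}$; by Lemma \ref{horizontaldistances}, $|\tilde u| = d(u)$. Since the projection $p$ is a group homomorphism, $p(uz) = \tilde u + p(z)$, and $d(uz) \geq |p(uz)|$ because any horizontal curve joining $0$ to $uz$ projects to a Euclidean curve joining $0$ to $p(uz)$ of the same length. Applying the Cauchy--Schwarz inequality in $\mathbb{R}^{2n}$ in the form $|a+b| \geq \langle a+b, a/|a|\rangle = |a| + \langle b, a/|a|\rangle$, with $a = \tilde u$ and $b = p(z)$, yields
\[ d(uz) \geq |p(uz)| \geq d(u) + \langle p(z), \tilde u / d(u)\rangle = d(u) + \langle z, u/d(u)\rangle, \]
where the last equality uses $u_{2n+1} = 0$. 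This is (1).

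For (2), part (1) already supplies the lower bound, so the task reduces to proving the matching upper bound $d(uz) \leq d(u) + \langle z, u/d(u)\rangle + o(d(z))$. For $z$ small enough that $p(uz) \neq 0$, apply Lemma \ref{goodcurve} to the point $uz$ to obtain a Lipschitz horizontal curve from $0$ to $uz$ defined on $[0,1]$; its Lipschitz constant gives
\[ d(uz) \leq L \sqrt{1 + c^2/L^4 + 4c^2/L^2}, \qquad L := |p(uz)|, \quad c := (uz)_{2n+1}. \]
A Taylor expansion of the square root applied to $L^2 = d(u)^2 + 2\langle \tilde u, p(z)\rangle + |p(z)|^2$ produces
\[ L = d(u) + \langle p(z), \tilde u/d(u)\rangle + O(|p(z)|^2) = d(u) + \langle z, u/d(u)\rangle + O(d(z)^2), \]
again using $u_{2n+1} = 0$ and $|p(z)| \leq d(z)$.

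The main analytic point is to control the correction factor $\sqrt{1 + c^2/L^4 + 4c^2/L^2}$. Using the Heisenberg group law together with $u_{2n+1} = 0$, one has $c = z_{2n+1} - 2\langle u_1, z_b\rangle + 2\langle u_2, z_a\rangle$, so by Cauchy--Schwarz and inequality \eqref{squarerootinequality},
\[ |c| \leq |z_{2n+1}| + 2d(u)|p(z)| \leq d(z)^2 + 2 d(u)\, d(z). \]
Since $L \to d(u) > 0$ as $z \to 0$, this forces $c^2/L^2$ and $c^2/L^4$ to be $O(d(z)^2)$, whence the square root factor equals $1 + O(d(z)^2)$. Multiplying with the expansion for $L$ gives $d(uz) \leq d(u) + \langle z, u/d(u)\rangle + o(d(z))$, which completes (2). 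The claim that $L(x) := \langle x, u/d(u)\rangle$ is $\mathbb{H}$-linear is then immediate: $u_{2n+1} = 0$ forces $L$ to depend only on the first $2n$ coordinates of its argument, which are additive under the group law and scale linearly under $\delta_r$.

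The main obstacle is the cross-term appearing in $c$ due to the noncommutativity of the group law: \textit{a priori} it is only $O(d(z))$ rather than $o(d(z))$, so it cannot be discarded directly. The saving point is that this $O(d(z))$ contribution to $c$ becomes $O(d(z)^2)$ once divided by $L^2$, where $L$ is bounded below by a positive constant near $u$; the square root correction is then harmless. The other ingredients --- the projection bound, the Cauchy--Schwarz step, and the Taylor expansion of $L$ --- are routine once Lemma \ref{goodcurve} is in hand.
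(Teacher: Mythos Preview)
Your proof is correct and follows essentially the same approach as the paper: both use the projection bound $d(uz)\geq |p(uz)|$ together with an inner-product inequality for (1), and Lemma~\ref{goodcurve} for the upper bound in (2), controlling the vertical component $c$ by $O(d(z))$ and the horizontal part $L=|p(uz)|$ by a first-order expansion. The only cosmetic differences are that the paper normalizes $d(u)=1$ via dilations and works with explicit constants rather than $O(\cdot)$ notation.
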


\begin{proof}
We may assume that $d(u)=1$ throughout the proof, since the general statement can be deduced using dilations. To prove (1) first recall $d(x)\geq |p(x)|$ for all $x\in \mathbb{H}^n$, while Lemma \ref{horizontaldistances} shows $d(u)=|p(u)|$ for our particular choice of $u$. Clearly also $\langle p(z),p(u)\rangle = \langle z,u\rangle$ for such $u$. We use Pythagoras' theorem and $d(u)=|p(u)|=1$ to estimate as follows:
\begin{align*}
d(uz) &\geq |p(uz)|\\
&= |p(u)+p(z)|\\
&= | p(u)(1+\langle p(z), p(u) \rangle ) + (p(z)-\langle p(z), p(u) \rangle p(u) )|\\
&\geq |p(u)| (1+\langle p(z),p(u) \rangle )\\
&= |p(u)|+\langle p(z), p(u)\rangle\\
&= d(u)+\langle z, u\rangle.
\end{align*}

To prove (2) it suffices to show that $d(uz)\leq d(u)+\langle z, u/d(u)\rangle+ o(d(z))$ as $z\to 0$. Let $(a,b,c)=uz$ and $L=|p(uz)|=|p(u)+p(z)|$. Assume $d(z)\leq 1/2$. Using $|p(u)|=1$ and $|p(z)|\leq d(z)\leq 1/2$, we see $1/2\leq L\leq 2$. Using the formula for the group law and $|z_{2n+1}|\leq d(z)^2$ from \eqref{squarerootinequality}, we can estimate $c$ as follows:
\begin{align*}
|c|&\leq |z_{2n+1}| + 4|p(u)||p(z)|\\
&\leq d(z)^2 + 4d(z)\\
&\leq 5d(z).
\end{align*}
Lemma \ref{goodcurve} and the definition of the Carnot-Carath\'{e}odory distance gives:
\begin{align*}
d(zw) &\leq L\Big(1+\frac{c^2}{L^4}+\frac{4c^2}{L^2}\Big)^{\frac{1}{2}}\\
&\leq L(1+800d(z)^2)^{\frac{1}{2}}\\
& \leq L+o(d(z)).
\end{align*}
To conclude the proof of (2) we claim that $L\leq 1+\langle z,u\rangle + o(d(z))$. Estimate as follows:
\begin{align*}
L&=|p(u)+p(z)|\\
&=|p(u)(1+\langle p(z),p(u)\rangle) + (p(z)-\langle p(z), p(u)\rangle p(u)) |\\
&=( (1+\langle p(z),p(u) \rangle)^{2} + |p(z)-\langle p(z),p(u) \rangle p(u)|^{2} )^{\frac{1}{2}}\\
&\leq ( (1+\langle p(z),p(u) \rangle)^{2} + 4d(z)^{2} )^{\frac{1}{2}}\\
&=(1+\langle p(z),p(u) \rangle)\Big(1+\frac{4d(z)^{2}}{(1+\langle p(z),p(u) \rangle)^{2}}\Big)^{\frac{1}{2}}\\
&\leq (1+\langle z,u \rangle)\Big(1+\frac{2d(z)^{2}}{(1+\langle z,u \rangle)^{2}}\Big).
\end{align*}
The claim then follows since $d(z)/(1+\langle z, u \rangle)\leq 2d(z)\to 0$ as $d(z)\to 0$.
\end{proof}

We now use Lemma \ref{differentiabilityofdistance} to show that existence of a maximal horizontal directional derivative implies Pansu differentiability. This is an adaptation of \cite[Theorem 2.4]{Fit84}. By Lemma \ref{lipismaximal}, existence of a maximal horizontal directional derivative is equivalent to the agreement of a directional derivative with the Lipschitz constant.

\begin{theorem}\label{maximalityimpliesdifferentiability}
Let $f\colon \mathbb{H}^n \to \mathbb{R}$ be Lipschitz, $x\in \mathbb{H}^n$ and $E\in V$ with $\omega(E)=1$. Suppose $Ef(x)$ exists and $Ef(x)=\mathrm{Lip}_{\mathbb{H}}(f)$. Then $f$ is Pansu differentiable at $x$ with derivative $L(x):=\mathrm{Lip}_{\mathbb{H}}(f)\langle x, E(0) \rangle = \mathrm{Lip}_{\mathbb{H}}(f)\langle p(x), p(E) \rangle$.
\end{theorem}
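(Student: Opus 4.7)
First reduce to the case $x = 0$, $f(0) = 0$: setting $\tilde f(y) := f(xy) - f(x)$ preserves the Lipschitz constant $K := \mathrm{Lip}_{\mathbb{H}}(f)$ by left-invariance of $d$, and by \eqref{linestolines} the horizontal line $t \mapsto tE(0)$ through $0$ corresponds under $y \mapsto xy$ to $t \mapsto x + tE(x)$ through $x$, giving $E\tilde f(0) = Ef(x) = K$. Pansu differentiability of $f$ at $x$ with derivative $L$ is equivalent to that of $\tilde f$ at $0$ with the same $L$. Write $u := E(0)$, so $|u| = d(u) = \omega(E) = 1$ by Lemma \ref{horizontaldistances}. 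The candidate derivative $L(y) := K\langle y, u\rangle = K\langle p(y), p(u)\rangle$ is $\mathbb{H}$-linear: since $p$ is a group homomorphism into $(\mathbb{R}^{2n}, +)$, $L(yz) = L(y) + L(z)$, and $L(\delta_r(y)) = rL(y)$ follows from $u$ having vanishing final coordinate. The goal becomes to show $|\tilde f(z) - L(z)| = o(d(z))$ as $z \to 0$.

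The key analytic input is Lemma \ref{differentiabilityofdistance} applied to the horizontal unit vectors $\pm u$. Combining it with the dilation identity $d(w) = t\, d(\delta_{1/t}(w))$ and the fact that dilations are group homomorphisms yields, for each $t > 0$,
\begin{equation*}
d(z, \pm tu) \;=\; t \mp \langle z, u \rangle + d(z)\, \alpha_{\pm}(d(z)/t),
\end{equation*}
where $\alpha_{\pm}(s) \to 0$ as $s \to 0^+$. Indeed, $d(z, tu) = d((-tu)z) = t\,d((-u)\,\delta_{1/t}(z))$, and Lemma \ref{differentiabilityofdistance}(2) applied with its ``$u$'' equal to $-u$ and its ``$z$'' equal to $\delta_{1/t}(z)$ gives $d((-u)\delta_{1/t}(z)) = 1 - \langle z, u\rangle/t + o(d(z)/t)$; multiplying by $t$ yields the displayed expansion, and $-tu$ is symmetric.

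Since $E\tilde f(0) = K$ along the line $t \mapsto tu$, we have $\tilde f(\pm tu) = \pm Kt + \psi_{\pm}(t)$ with $\psi_{\pm}(t)/t \to 0$. The Lipschitz bound $|\tilde f(z) - \tilde f(y)| \leq K d(z,y)$ with $y = tu$ (for a lower bound) and $y = -tu$ (for an upper bound), together with the expansion above, gives
\begin{equation*}
\psi_+(t) - K d(z)\, \alpha_+(d(z)/t) \;\leq\; \tilde f(z) - L(z) \;\leq\; \psi_-(t) + K d(z)\, \alpha_-(d(z)/t).
\end{equation*}

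To finish, given $\varepsilon > 0$, choose $s_0 > 0$ so that $|\alpha_{\pm}(s)| \leq \varepsilon/(2K)$ for $s \leq s_0$, and then $t_1 > 0$ so that $|\psi_{\pm}(t)| \leq (\varepsilon s_0 / 2)\, t$ for $t \leq t_1$. For any $z$ with $d(z) \leq s_0 t_1$, set $t := d(z)/s_0 \leq t_1$; then $d(z)/t = s_0$, so both error terms are bounded by $\varepsilon d(z)/2$, yielding $|\tilde f(z) - L(z)| \leq \varepsilon d(z)$. The principal obstacle is exactly this balancing: the directional-derivative error demands $t$ small, while the distance-expansion error demands $t/d(z)$ bounded below. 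Taking $t$ linearly proportional to $d(z)$, with a constant of proportionality depending on $\varepsilon$, reconciles the two.
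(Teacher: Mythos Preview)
Your proof is correct and follows essentially the same route as the paper's: compare $f$ at the target point with $f$ at nearby points $\pm tu$ on the horizontal line via the Lipschitz bound, expand the distance $d(z,\pm tu)$ using Lemma~\ref{differentiabilityofdistance} after a dilation, and choose $t$ proportional to $d(z)$ (your $t=d(z)/s_0$ matches the paper's $t=\alpha^{-1}d(w)$) to balance the directional-derivative error against the distance-expansion error. The only cosmetic differences are your preliminary reduction to $x=0$ and your packaging of the errors into functions $\alpha_\pm,\psi_\pm$; note that your $\alpha_\pm$ should strictly be read as uniform upper bounds (sups over $d(\cdot)\le s$) rather than exact functions of $d(z)/t$, and the trivial case $K=0$ should be excluded at the start.
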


\begin{proof}
Let $0<\varepsilon\leq 1/2$. Using Lemma \ref{differentiabilityofdistance}, we can choose $0<\alpha \leq \varepsilon$ such that whenever $d(z) \leq \alpha$:
\[ d(E(0)z) - d(E(0)) \leq \langle z, E(0) \rangle + \varepsilon d(z).\]
Use existence of $Ef(x)$ to fix $\delta>0$ such that whenever $|t|\leq \delta$:
\[|f(x+tE(x))-f(x)-tEf(x)|\leq \alpha^{2}|t|.\]

Suppose that $0<d(w) \leq \alpha \delta$ and $t=\alpha^{-1}d(w)$. Then $0<t\leq \delta$, $d(\delta_{t^{-1}}(w))=\alpha$ and $2d(w)=2\alpha t\leq t$. Recall that $\omega(E)=1$ implies $d(E(0))=1$. We use also left invariance of the Carnot-Carath\'{e}odory distance to estimate as follows:
\begin{align*}
f(xw)-f(x) &= (f(xw) - f(x-tE(x))) + (f(x-tE(x)) - f(x))\\
&\leq \mathrm{Lip}_{\mathbb{H}}(f)d(xw,x -tE(x)) - tEf(x) + \alpha^{2}t\\
&= \mathrm{Lip}_{\mathbb{H}}(f)d(xw,x(-tE(0))) - tEf(x) + \alpha^{2}t\\
&= \mathrm{Lip}_{\mathbb{H}}(f)d((tE(0))w) - t\mathrm{Lip}_{\mathbb{H}}(f) + \alpha^{2}t\\
&= t \mathrm{Lip}_{\mathbb{H}}(f)( d(E(0)\delta_{t^{-1}}(w)) - d(E(0))) + \alpha^{2}t\\
&\leq t \mathrm{Lip}_{\mathbb{H}}(f)( \langle \delta_{t^{-1}}(w), E(0) \rangle + \varepsilon d(\delta_{t^{-1}}(w))) + \alpha^{2}t\\
&\leq \mathrm{Lip}_{\mathbb{H}}(f)\langle w, E(0) \rangle + \varepsilon \mathrm{Lip}_{\mathbb{H}}(f) d(w) + \alpha d(w)\\
&\leq \mathrm{Lip}_{\mathbb{H}}(f)\langle w, E(0) \rangle + \varepsilon (\mathrm{Lip}_{\mathbb{H}}(f) +1)d(w).
\end{align*}
For the opposite inequality we have:
\begin{align*}
f(xw)-f(x) &= (f(xw)-f(x+tE(x))) + (f(x+tE(x))-f(x))\\
&\geq -\mathrm{Lip}_{\mathbb{H}}(f)d(xw,x+tE(x)) + tEf(x) - \alpha^{2}t\\
&= -\mathrm{Lip}_{\mathbb{H}}(f)d(xw,x(tE(0))) + tEf(x) - \alpha^{2}t\\
&= -\mathrm{Lip}_{\mathbb{H}}(f) d((-tE(0))w)+ t\mathrm{Lip}_{\mathbb{H}}(f) - \alpha^{2}t\\
&= - t\mathrm{Lip}_{\mathbb{H}}(f)(d((-E(0))\delta_{t^{-1}}(w)) - d(E(0))) - \alpha^{2}t\\
&\geq -t\mathrm{Lip}_{\mathbb{H}}(f)(\langle \delta_{t^{-1}}(w), -E(0) \rangle + \varepsilon d(\delta_{t^{-1}}(w))) - \alpha^{2}t\\
&\geq -\mathrm{Lip}_{\mathbb{H}}(f)\langle w, -E(0) \rangle - \varepsilon \mathrm{Lip}_{\mathbb{H}}(f)d(w) - \alpha^{2}t\\
&\geq \mathrm{Lip}_{\mathbb{H}}(f)\langle w, E(0) \rangle - \varepsilon\mathrm{Lip}_{\mathbb{H}}(f)d(w)-\alpha d(w)\\
&=\mathrm{Lip}_{\mathbb{H}}(f)\langle w, E(0)\rangle - \varepsilon(\mathrm{Lip}_{\mathbb{H}}(f)+1)d(w).
\end{align*}
This shows that $d(w) \leq \alpha\delta$ implies:
\[|f(xw)-f(x)-\mathrm{Lip}_{\mathbb{H}}(f)\langle w, E(0) \rangle |\leq (\mathrm{Lip}_{\mathbb{H}}(f)+1)\varepsilon d(w).\]
Hence $f$ is Pansu differentiable at $x$ with derivative $\mathrm{Lip}_{\mathbb{H}}(f)\langle \cdot,E(0) \rangle$.
\end{proof}

An arbitrary Lipschitz function may not have a maximal horizontal directional derivative as in Theorem \ref{maximalityimpliesdifferentiability}. Construction of almost locally maximal horizontal directional derivatives inside a measure zero set, and deducing Pansu differentiability at such points, is the content of the rest of the paper.

\section{The universal differentiability set and horizontal curves}\label{nullset}
In this section we identify our measure zero universal differentiability set (Lemma \ref{uds}) and construct useful horizontal curves inside this set (Lemma \ref{newcurveg}). Recall that a set in a topological space is $G_{\delta}$ if it is a countable intersection of open sets.

\begin{lemma}\label{uds}
There is a Lebesgue measure zero $G_{\delta}$ set $N\subset \mathbb{H}^{n}$ containing all straight lines which are also horizontal curves and join pairs of points of $\mathbb{Q}^{2n+1}$. Any such set contains the image of:
\begin{enumerate}
\item the line $x+tE(x)$ whenever $x\in \mathbb{Q}^{2n+1}$ and $E\in V$ is a linear combination of $\{X_{i}, Y_{i}: 1\leq i\leq n\}$ with rational coefficients,
\item all curves of the form $x\gamma_{y}$ for $x, y\in \mathbb{Q}^{2n+1}$ with $p(y)\neq 0$, where $\gamma_{y}$ is the curve constructed in Lemma \ref{goodcurve}.
\end{enumerate}
\end{lemma}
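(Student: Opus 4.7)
The strategy breaks into three clean pieces: (i) construct $N$ by enclosing a suitable countable union of line segments in a $G_\delta$ null set, then (ii) verify property (1) by expressing the horizontal line through a rational point in a rational horizontal direction as a countable union of such rational segments, and (iii) verify property (2) by using the explicit Heisenberg group law to check that left-translation of the two affine pieces of $\gamma_y$ is again piecewise affine with rational breakpoints.

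\textbf{Construction of $N$.} Let $\mathcal{S}$ be the collection of straight line segments $[p,q] := \{p+t(q-p):t\in[0,1]\}$ such that $p,q\in\mathbb{Q}^{2n+1}$ and $[p,q]$ is a horizontal curve in $\mathbb{H}^{n}$. Since $\mathbb{Q}^{2n+1}\times\mathbb{Q}^{2n+1}$ is countable, $\mathcal{S}$ is countable, and $S:=\bigcup_{\sigma\in\mathcal{S}}\sigma$ is a countable union of line segments in $\mathbb{R}^{2n+1}$ (with $2n+1\ge 3$), hence $\mathcal{L}^{2n+1}(S)=0$. By outer regularity of Lebesgue measure, for each $k\in\mathbb{N}$ pick an open set $U_{k}\supset S$ with $\mathcal{L}^{2n+1}(U_{k})<1/k$, and set $N:=\bigcap_{k}U_{k}$. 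Then $N$ is $G_\delta$, $\mathcal{L}^{2n+1}(N)=0$, and $S\subset N$, so $N$ has the required defining property. (Any other set satisfying the defining property of course contains $S$, so the verifications of (1) and (2) need only show that the given curves lie in $S$.)

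\textbf{Verification of (1).} Let $x\in\mathbb{Q}^{2n+1}$ and $E=\sum_{i=1}^{n}(q_{i}X_{i}+r_{i}Y_{i})$ with $q_{i},r_{i}\in\mathbb{Q}$. By Definition \ref{vectorfields},
\[
E(x)=\bigl(q_{1},\dots,q_{n},r_{1},\dots,r_{n},\; 2\textstyle\sum_{i=1}^{n}(q_{i}x_{n+i}-r_{i}x_{i})\bigr)\in\mathbb{Q}^{2n+1}.
\]
By \eqref{linestolines} the map $s\mapsto x+sE(x)$ is the left translate by $x$ of the horizontal line $s\mapsto sE(0)$, hence is a horizontal curve. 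For any $t\in\mathbb{Q}$ the segment with endpoints $x$ and $x+tE(x)$ therefore lies in $\mathcal{S}$, and taking the union over a sequence $t\to\pm\infty$ in $\mathbb{Q}$ covers the whole line.

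\textbf{Verification of (2).} Write $y=(a,b,c)\in\mathbb{Q}^{2n+1}$ with $p(y)\ne 0$, so $L^{2}=\sum_{i}(a_{i}^{2}+b_{i}^{2})\in\mathbb{Q}_{>0}$. The breakpoints $\gamma_{y}(0)=0$, $\gamma_{y}(1/2)=\tfrac{1}{2}(a-bc/L^{2},\,b+ac/L^{2},\,0)$, and $\gamma_{y}(1)=y$ all lie in $\mathbb{Q}^{2n+1}$, and the same is true of the left translates $x,\,x\gamma_{y}(1/2),\,xy$ by the formula in Definition \ref{Heisenberg}. On each subinterval $[0,1/2]$ and $[1/2,1]$ the curve $\gamma_{y}$ is affine with constant velocity $v$, and writing $\gamma_{y}(t)=\gamma_{y}(t_0)+(t-t_0)v$ one sees from Definition \ref{Heisenberg} that every coordinate of $x\gamma_{y}(t)$ is an affine function of $t$ — the twisted term is linear in $(\alpha(t),\beta(t))$, which are themselves affine. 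So $x\gamma_{y}$ consists of two Euclidean straight segments with rational endpoints. Since $\gamma_{y}$ is horizontal (Lemma \ref{goodcurve}) and horizontality is preserved by left translation, each segment belongs to $\mathcal{S}\subset N$.

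\textbf{Main obstacle.} Nothing here is deep; the only point needing care is verifying in (2) that Heisenberg left translation preserves the piecewise-affine structure of $\gamma_y$ in $\mathbb{R}^{2n+1}$. This is immediate from the explicit group law, since the nonlinear (symplectic) correction in the last coordinate depends only on the first $2n$ coordinates, which themselves depend affinely on $t$.
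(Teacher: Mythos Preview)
Your proof is correct and follows essentially the same approach as the paper: construct $N$ by covering a countable union of horizontal lines through rational points by open sets of shrinking measure, then verify (1) and (2) by checking that the relevant curves are (unions of) horizontal line segments with rational endpoints. You supply more detail than the paper does---in particular the explicit check that left translation preserves the affine structure of $\gamma_y$ and that $\gamma_y(1/2)\in\mathbb{Q}^{2n+1}$ because $L^2\in\mathbb{Q}$---but the argument is the same.
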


\begin{proof}
There exist open sets of of arbitrarily small measure containing a fixed line. By taking a union of countably many such open sets with decreasing measure, we find an open set of arbitrarily small measure containing all horizontal lines joining points of $\mathbb{Q}^{2n+1}$. By taking a countable intersection of such sets with measures converging to $0$, we obtain the required measure zero $G_{\delta}$ set $N$.

The horizontal line $x+tE(x)$ joins $x$ to $x+E(x)$ so its image is a subset of $N$ whenever $x\in \mathbb{Q}^{2n+1}$ and $E\in V$ is a linear combination of $\{X_{i}, Y_{i}: 1\leq i\leq n\}$ with rational coefficients. By examining the formula for $\gamma_{y}$ in Lemma \ref{goodcurve}, we see each curve $x \gamma_{y}$, for $x, y\in \mathbb{Q}^n$ with $p(y)\neq 0$, is a union of two such horizontal lines. This proves the lemma.
\end{proof}

To prove Theorem \ref{almostmaximalityimpliesdifferentiability} (an almost maximal directional derivative implies Pansu differentiability) we will modify horizontal line segments (along which a Lipschitz function will have a large directional derivative) to pass through nearby points (which intuitively show non-Pansu differentiability at some point). In the next lemma we see how to do this without changing the length or direction of the line too much.

\begin{lemma}\label{newcurveg}
Given $\eta>0$, there is $0<\Delta(\eta)<1/2$ and $C_{\mathrm{m}}=C_{\mathrm{modify}}\geq 1$ such that the following holds whenever $0<\Delta<\Delta(\eta)$. Suppose:
\begin{itemize}
\item $x, u\in \mathbb{H}^n$ with $d(u)\leq 1$,
\item $E\in V$ with $\omega(E)=1$,
\item $0<r<\Delta$ and $s:=r/\Delta$.
\end{itemize}
Then there is a Lipschitz horizontal curve $g \colon \mathbb{R} \to \mathbb{H}^n$ such that:
\begin{enumerate}
\item $g(t)=x+tE(x)$ for $|t|\geq s$,
\item $g(\zeta)=x\delta_{r}(u)$, where $\zeta:=r\langle u,E(0)\rangle$,
\item $\mathrm{Lip}_{\mathbb{H}}(g)\leq 1+\eta \Delta$,
\item $g'(t)$ exists and $|(p\circ g)'(t)- p(E)| \leq C_{\mathrm{m}}\Delta$ for $t\in \mathbb{R}$ outside a finite set.
\end{enumerate}
Suppose additionally $x, u\in \mathbb{Q}^{2n+1}$, $E$ is a linear combination of $\{X_{i}, Y_{i}: 1\leq i\leq n\}$ with rational coefficients and $r, s \in \mathbb{Q}$. Then $g$ is a concatenation of curves from Lemma \ref{uds}(1,2).
\end{lemma}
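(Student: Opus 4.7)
My plan is to reduce to $x=0$ by left invariance and assemble $g$ from three pieces: the line $tE(0)$ on $|t| \geq s$, and in the central window two translated copies of the curve $\gamma_y$ of Lemma \ref{goodcurve} parametrized linearly on $[-s,\zeta]$ and $[\zeta,s]$. Writing $v = p(E)$ (so $|v|=1$ and $E(0)=(v,0)$), the natural endpoints are $y_1 = sE(0)\cdot\delta_r(u)$ and $y_2 = \delta_r(u)^{-1}\cdot sE(0)$: since $sE(0) = (-sE(0))^{-1}$, left-translating $\gamma_{y_1}$ by $-sE(0)$ sends $0$ to $-sE(0)$ and $y_1$ to $\delta_r(u)$, and similarly $\delta_r(u)\cdot\gamma_{y_2}$ goes from $\delta_r(u)$ to $sE(0)$. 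A direct computation with the group law gives $p(y_1) = sv + rp(u)$ (analogously for $y_2$), with norm between $s-r$ and $s+r$, in particular nonzero since $r = s\Delta < s$, so $\gamma_{y_i}$ is defined by Lemma \ref{goodcurve}. The same computation, together with $|u_{2n+1}| \leq d(u)^2 \leq 1$ from \eqref{squarerootinequality}, gives $|(y_i)_{2n+1}| \leq r^2 + 2rs \leq 3rs$. Properties (1)--(2) and continuity at the junctions $\pm s,\zeta$ follow by construction; horizontality follows from Lemma \ref{goodcurve} and left invariance of horizontality.

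For (3), Lemma \ref{lipschitzhorizontal} gives $\mathrm{Lip}_{\mathbb{H}}(\gamma_{y_i}) = \mathrm{Lip}_{\mathbb{E}}(p\circ\gamma_{y_i})$, and a direct calculation from the piecewise formula in Lemma \ref{goodcurve} yields $|(p\circ\gamma_{y_i})'(t)| = L_i\sqrt{1 + c_i^2/L_i^4}$, where $L_i := |p(y_i)|$ and $c_i := (y_i)_{2n+1}$. The key identity $L_1^2 = (\zeta+s)^2 + r^2(|p(u)|^2 - \langle p(u),v\rangle^2)$ (and its analogue for $L_2$) together with $\zeta+s \geq s(1-\Delta)$ gives $L_i/(\zeta+s) \leq 1 + 2\Delta^2$, while $|c_i| \leq 3rs$ and $L_i \geq s(1-\Delta)$ force $c_i^2/L_i^4 \leq 144\Delta^2$. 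Since linear reparametrization on an interval of length $\zeta+s$ (resp.\ $s-\zeta$) divides the Lipschitz constant by that factor, each middle piece has $\mathrm{Lip}_{\mathbb{H}} \leq (1+2\Delta^2)(1+72\Delta^2) \leq 1+75\Delta^2$. The Lipschitz constant of a continuous concatenation equals the maximum of the piecewise constants by the triangle inequality, and the line pieces have constant exactly $1$; hence choosing $\Delta(\eta) := \min(1/2, \eta/75)$ gives $\mathrm{Lip}_{\mathbb{H}}(g) \leq 1+\eta\Delta$.

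On the line pieces $g'(t) = E(0)$, so $(p\circ g)'(t) - p(E) = 0$. On the middle pieces, $(p\circ g)'(t)$ equals $p(y_i)/(\zeta+s)$ plus a perpendicular correction in $\mathbb{R}^{2n}$ of norm at most $c_i/L_i$, each divided by the reparametrization factor; the identity $|rp(u) - \zeta v|^2 = r^2|p(u)|^2 - \zeta^2 \leq r^2$ yields $|p(y_1)/(\zeta+s) - v| \leq 2\Delta$, while the correction contributes at most $12\Delta$, so (4) holds with $C_{\mathrm{m}} = 14$. The only points where $g'$ fails to exist are the five candidates $\pm s, \zeta$ and the two midpoints of the sub-curves $\gamma_{y_i}$, a finite set. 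For the rationality claim, $y_1, y_2$ are products and dilates of rationals, the midpoints of $\gamma_{y_i}$ involve only $L_i^2 = |p(y_i)|^2 \in \mathbb{Q}$ and so are rational, and each of the six resulting straight segments joins points of $\mathbb{Q}^{2n+1}$ along a horizontal direction with rational coefficients in $\{X_i,Y_i\}$, placing it in $N$ by Lemma \ref{uds}(1,2).

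The main obstacle is the $\Delta^2$-order estimate in (3): to trade one factor of $\Delta$ for $\eta$ one must exploit near-parallel geometry and show that both $L_i/(\zeta+s) - 1$ and $c_i/L_i^2$ are of order $\Delta^2$, not merely $\Delta$. This is forced by the algebraic identity $L_1^2 - (\zeta+s)^2 = r^2(|p(u)|^2 - \langle p(u), v\rangle^2)$, which cancels the linear-in-$r$ terms between $L_1^2$ and $(\zeta+s)^2$ and leaves only the quadratic remainder, and by the fact that the last coordinate of $y_i$ is already $O(rs)$ while $L_i^2$ is $\Theta(s^2)$.
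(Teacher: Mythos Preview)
Your proof is correct and follows essentially the same approach as the paper: reduce to $x=0$ by left invariance, keep the line $tE(0)$ for $|t|\geq s$, and on each of $[-s,\zeta]$ and $[\zeta,s]$ use a left-translated, linearly reparametrized copy of the curve $\gamma_y$ from Lemma~\ref{goodcurve}, with $y$ equal to the group-difference of the endpoints. Your orthogonal identity $L_1^2-(\zeta+s)^2=r^2(|p(u)|^2-\langle p(u),v\rangle^2)$ is exactly the paper's decomposition $p((sE(0))\delta_r(u))=(s+\zeta)p(E)+(rp(u)-\zeta p(E))$, and your bound $|c_i|\leq 3rs$ plays the role of the paper's $|c|\leq 5rs$; your constants ($\Delta(\eta)=\eta/75$, $C_{\mathrm m}=14$) are sharper than the paper's ($\eta/1632$, $184$) only because you invoke $\mathrm{Lip}_{\mathbb H}(\gamma_y)=\mathrm{Lip}_{\mathbb E}(p\circ\gamma_y)=L\sqrt{1+c^2/L^4}$ directly via Lemma~\ref{lipschitzhorizontal} rather than the slightly looser bound of Lemma~\ref{goodcurve}(2).
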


\begin{proof}
The distance $d$ is invariant under left group translations and the group law is linear in the first $2n$ coordinates. Hence to prove (1)--(4) we can assume $x=0$. From the proof it will be clear that this also suffices for the final statement.

For $|t|\geq s$ the curve $g(t)$ is explicitly defined by (1) and satisfies (3) and (4). To define $g(t)$ for $|t|<s$ we consider the two cases $-s< t\leq \zeta$ and $\zeta \leq t < s$. These are similar so we show how to define the curve for $-s< t\leq \zeta$. By using left group translations by $\pm sE(0)$ and reparameterizing the curve, it suffices to show the following claim.

\medskip

\emph{Claim.} Suppose $0< \Delta< \eta/1632$. Then there exists a Lipschitz horizontal curve $\varphi \colon [0,s+\zeta] \to \mathbb{H}^n$ such that $\varphi(0)=0$, $\varphi(s+\zeta)=(sE(0))\delta_{r}(u)$ and:
\begin{enumerate}
\item[(A)] $\mathrm{Lip}_{\mathbb{H}}(\varphi)\leq 1+\eta \Delta$,
\item[(B)] $\varphi'(t)$ exists and $|(p\circ \varphi)'(t)-p(E)| \leq 184 \Delta$ for $t\in [0,s+\zeta]$ outside a finite set.
\end{enumerate}

\medskip

\emph{Proof of Claim.}
Let $(a,b,c):=(sE(0))\delta_{r}(u)$ and $L:=|(a,b)|$. Observe:
\[L=|sp(E) + rp(u)|=s|p(E)+\Delta p(u)|.\]
Our assumptions imply $|p(E)|=\omega(E)=1$ and $|p(u)|\leq d(u)\leq 1$. Using also \eqref{squarerootinequality} gives $|u_{2n+1}|\leq 1$. Since $0<\Delta< 1/2$ we deduce $s/2 \leq L \leq 2s$. Definition \ref{Heisenberg} and Definition \ref{Dilations} give:
\[|c|\leq r^2 |u_{2n+1}| + 4rs\leq r^2+4rs\leq 5rs.\]
Lemma \ref{goodcurve} provides a Lipschitz horizontal curve $\gamma \colon [0,1] \to \mathbb{H}^{n}$ joining $0$ to $(a,b,c)$ such that:
\begin{enumerate}
\item[(A')] $\mathrm{Lip}_{\mathbb{H}}(\gamma) \leq L\Big( 1 + \frac{c^2}{L^4} + \frac{4c^2}{L^2} \Big)^{\frac{1}{2}}$,
\item[(B')] $\gamma'(t)$ exists and $|\gamma'(t)-(a,b,0)|\leq \frac{c}{L} (1+4L^{2})^{\frac{1}{2}}$ for $t\in [0,1]\setminus \{1/2\}$.
\end{enumerate}
We verify the claim with $\varphi\colon [0,s+\zeta]\to \mathbb{H}^{n}$ defined by $\varphi(t)=\gamma(t/(s+\zeta))$. Notice that $\varphi$ is a Lipschitz horizontal curve with $\varphi(0)=0$ and $\varphi(s+\zeta)=(sE(0))\delta_{r}(u)$. 

\medskip

\emph{Proof of (A).} We first develop the estimate (A'). For this we use our estimates of $c$ and $L$, the inequality $s<1$, and the equality $r=\Delta s$:
\begin{align*}
\mathrm{Lip}_{\mathbb{H}}(\gamma) &\leq L\Big( 1 + \frac{c^2}{L^4} + \frac{4c^2}{L^2} \Big)^{\frac{1}{2}}\\
&\leq L\Big( 1+400(r^2/s^2)+400r^2\Big)^{\frac{1}{2}}\\
&\leq L\Big( 1+800\Delta^2\Big)^{\frac{1}{2}}\\
&\leq L + 800s\Delta^2.
\end{align*}
To estimate $L=|p( (sE) (\delta_{r}(u))  )|$ more carefully, first recall:
\[\zeta=r\langle u,E(0)\rangle= r\langle p(u),p(E)\rangle.\]
We use the orthogonal decomposition:
\begin{equation}\label{orthogonaldecomposition}
p( (sE) \delta_{r}(u) )=(s+\zeta)p(E) + (rp(u)-\zeta p(E)).
\end{equation}
Using $d(u)\leq 1$ and $\omega(E)\leq 1$ gives $|\zeta|\leq r\leq s/2$ and $|rp(u)-\zeta p(E)|\leq 2r$. We estimate as follows:
\begin{align*}
L&= ( (s+\zeta)^2 + |rp(u)-\zeta p(E)|^2 )^{\frac{1}{2}}\\
&\leq ( (s+\zeta)^{2} + 4r^2 )^{\frac{1}{2}}\\
&= ( (s+\zeta)^{2} + 4\Delta^2 s^2 )^{\frac{1}{2}}\\
&= (s+\zeta)(1+4\Delta^{2}s^2/(s+\zeta)^{2} )^{\frac{1}{2}}\\
&\leq (s+\zeta)(1+16\Delta^2)^{\frac{1}{2}}\\
&\leq (s+\zeta)(1+8\Delta^2)\\
&\leq s+\zeta+16s\Delta^{2}.
\end{align*}
Putting together these estimates gives:
\begin{align*}
\mathrm{Lip}_{\mathbb{H}}(\varphi)&\leq \mathrm{Lip}_{\mathbb{H}}(\gamma)/(s+\zeta)\\
&\leq (s+\zeta+816s\Delta^2)/(s+\zeta)\\
& \leq 1+1632\Delta^2.
\end{align*}
Hence $\mathrm{Lip}_{\mathbb{H}}(\varphi)\leq 1+\eta \Delta$ since $0< \Delta< \eta/1632$. This proves (A).

\medskip

\emph{Proof of (B).} The decomposition \eqref{orthogonaldecomposition} implies:
\[|p((sE(0))\delta_{r}(u)) - (s+\zeta)p(E)| \leq 2r = 2\Delta s.\]
Since $E \in V$ and $(a,b,c)=(sE(0))\delta_{r}(u)$ we deduce:
\[|(a,b,0)-(s+\zeta)E(0)| \leq 2\Delta s.\]
Using (B') shows that for $t\in [0,1]\setminus \{1/2\}$:
\begin{align*}
|\gamma'(t) - (s+\zeta)E(0)|&\leq 2\Delta s + \frac{c}{L}(1+4L^{2})^{\frac{1}{2}}\\
&\leq 2\Delta s + 10\Delta s (1+16s^2)^{\frac{1}{2}}\\
&\leq 92 \Delta s.
\end{align*}
Hence for $t\in [0,s+\zeta]\setminus \{ (s+\zeta)/2  \}$:
\begin{align*}
|(p\circ \varphi)'(t) - p(E)|&\leq |\varphi'(t) - E(0)|\\
&\leq 92\Delta s/(s+\zeta)\\
&\leq 184\Delta.
\end{align*}
This verifies (B). The final statement of the lemma is clear from the construction.
\end{proof}

\section{Almost maximality of directional derivatives implies Pansu differentiability}\label{sectionalmostmaxpansu}
In this section we first estimate how Lipschitz horizontal curves with the same starting point and moving in similar directions stay close together (Lemma \ref{closedirectioncloseposition}). We then give simple properties of the maps $x\mapsto \langle x, E(0) \rangle$ (Lemma \ref{lemmascalarlip}) and quote a mean value estimate (Lemma \ref{preissmeanvalue}) by Preiss \cite[Lemma 3.4]{Pre90}. Finally we show that existence of an almost locally maximal horizontal directional derivative implies Pansu differentiability (Theorem \ref{almostmaximalityimpliesdifferentiability}).

\begin{lemma}\label{closedirectioncloseposition}
Given $S>0$, there is a constant $C_{\mathrm{a}}=C_{\mathrm{angle}}(S)\geq 1$ for which the following is true. Suppose:
\begin{itemize}
\item $g, h \colon I \to \mathbb{H}^{n}$ are Lipschitz horizontal curves with $\mathrm{Lip}_{\mathbb{H}}(g), \mathrm{Lip}_{\mathbb{H}}(h)\leq S$,
\item $g(c)=h(c)$ for some $c\in I$,
\item there exists $0\leq A\leq 1$ such that $|(p\circ g)'(t) - (p\circ h)'(t)| \leq A$ for almost every $t\in I$.
\end{itemize}
Then $d(g(t), h(t))\leq C_{\mathrm{a}}\sqrt{A}|t-c|$ for every $t\in I$.
\end{lemma}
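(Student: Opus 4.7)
The plan is to estimate the Koranyi distance $d_K$ from \eqref{Koranyi}, which is Lipschitz equivalent to $d$ and has an explicit algebraic form, then transfer the bound back to $d$. Write $u = p\circ g$, $v = p\circ h$, and $w = v-u \colon I \to \mathbb{R}^{2n}$. By Lemma \ref{lipschitzhorizontal} and the hypotheses, $|u'|, |v'|\le S$ and $|w'|\le A$ almost everywhere, and $w(c)=0$ since $g(c)=h(c)$. Decomposing $u = (u^a, u^b)$ and $v = (v^a, v^b)$ in $\mathbb{R}^n\times \mathbb{R}^n$, a direct computation from Definition \ref{Heisenberg} gives that $p(g(t)^{-1}h(t)) = w(t)$ while the last coordinate of $g(t)^{-1}h(t)$ equals
\[ F(t) := (h_{2n+1}(t)-g_{2n+1}(t)) + 2\langle u^a(t), w^b(t)\rangle - 2\langle u^b(t), w^a(t)\rangle. \]
Hence $d_K(g(t),h(t))^4 = |w(t)|^4 + F(t)^2$. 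Since $|w(t)|\le A|t-c|$ and $A\le 1$, the first term is at most $A^2|t-c|^4$, so the whole task reduces to showing $|F(t)| \le C(S)\, A\, |t-c|^2$.

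To bound $F$, I would apply Lemma \ref{lift} to write $h_{2n+1}(t)-g_{2n+1}(t)$ as an integral from $c$ to $t$, substitute $v = u+w$ and expand. A purely quadratic "$w$" piece appears and is directly $O(A^2|t-c|^2)$, together with four mixed integrals of the type $\int_c^t \langle (u^a)', w^b\rangle$ and $\int_c^t \langle (w^a)', u^b\rangle$ (and their $u^b,w^a$ analogues). Integration by parts on all four of them, legitimate since $w(c)=0$, produces boundary contributions that sum to zero by symmetry of the inner product, and leaves integrals of the form $\int_c^t \langle u^a, (w^b)'\rangle$ etc. In each such integral I would split $u^a(s) = u^a(c) + (u^a(s)-u^a(c))$: the constant piece integrates explicitly to $\langle u^a(c), w^b(t)\rangle$ using $w(c)=0$, while the remainder is bounded by $\int_c^t S|s-c|\cdot A\,ds = O(SA|t-c|^2)$. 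Adding back the algebraic correction $2\langle u^a(t),w^b(t)\rangle - 2\langle u^b(t),w^a(t)\rangle$ in the definition of $F$, the $u(c)$-dependent boundary pieces fuse into terms like $2\langle u^a(t)-u^a(c), w^b(t)\rangle$, which are themselves $O(SA|t-c|^2)$. Collecting, $|F(t)| \le C(S)(A+A^2)|t-c|^2 \le C(S)\, A\, |t-c|^2$; hence $d_K(g(t),h(t)) \le C'(S)\sqrt{A}|t-c|$, and Lipschitz equivalence with $d$ finishes the proof.

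The main obstacle is that the hypotheses give no control on $|u(c)|$ or $|v(c)|$, so a naive bound on the mixed integrals produces error terms proportional to $|u(c)|$, which cannot be absorbed into the final estimate. The key fact is that the skew-symmetric correction $2(\langle u^a, w^b\rangle - \langle u^b, w^a\rangle)$ built into $F$ is precisely the piece of the Heisenberg law that twists the last coordinate, and it is designed exactly to cancel the $u(c)$-dependent boundary terms produced by the lifting formula of Lemma \ref{lift}. Organizing this cancellation is the heart of the argument; once in place, every remaining estimate reduces to a straightforward bound of the form $\int |s-c|\,ds$, and the assumption $A\le 1$ lets one absorb the $A^2$ contribution into $A$.
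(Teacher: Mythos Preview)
Your approach is correct and reaches the conclusion, but the integration-by-parts step as you describe it is a no-op: when you integrate each of the four mixed terms by parts and the boundary contributions cancel, the resulting four integrals are simply a permutation of the original four (by symmetry of the inner product, $-\int\langle u^a,(w^b)'\rangle$ equals $\int T_4$, and so on). What actually does the work is the splitting $u(s)=u(c)+(u(s)-u(c))$ applied directly to the two problematic integrals $\int\langle(w^a)',u^b\rangle$ and $-\int\langle(w^b)',u^a\rangle$; the other two mixed integrals are already $O(SA|t-c|^2)$ since $|w|\le A|s-c|$ and $|u'|\le S$. The constant-$u(c)$ pieces then fuse with the algebraic correction exactly as you say, leaving $2\langle u^a(t)-u^a(c),w^b(t)\rangle-2\langle u^b(t)-u^b(c),w^a(t)\rangle=O(SA|t-c|^2)$. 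A quicker route to the same cancellation is simply to differentiate $F$: one checks that
\[
F'(s)=4\langle(u^a)',w^b\rangle-4\langle(u^b)',w^a\rangle+2\langle(w^a)',w^b\rangle-2\langle(w^b)',w^a\rangle,
\]
from which $|F'(s)|\le C(S)A|s-c|$ is immediate and integrating gives the bound on $F$.

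The paper's argument is considerably shorter because it first uses left-invariance of $d$ to translate so that $g(c)=h(c)=0$. This forces $u(c)=0$, hence $|g_j(s)|\le S|s-c|$ for $1\le j\le 2n$, and every cross term in the Koranyi expression becomes trivially $O(SA|s-c|^2)$ with no cancellation argument at all; the obstacle you identify (no control on $|u(c)|$) simply disappears. Your proof, by working without this reduction, makes explicit \emph{why} the Heisenberg twist in the last coordinate is exactly the term needed to absorb the basepoint dependence---which is conceptually illuminating---while the paper hides this inside the group-invariance of $d$, buying brevity at the cost of that insight.
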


\begin{proof}
Assume $c=0\in I$ and, using left group translations, $g(0)=h(0)=0$. We estimate using the equivalent Koranyi distance \eqref{Koranyi}:
\begin{align}\label{angleestimate}
d(g(t),h(t))&\leq C d_{K}(g(t),h(t)) \\
&\leq C \sum_{i=1}^{2n} |h_{i}(t)-g_{i}(t)|\nonumber \\
&\quad +C \Big| h_{2n+1}(t) - g_{2n+1}(t) + 2\sum_{i=1}^{n} (g_{i}(t)h_{n+i}(t)-g_{n+i}(t)h_{i}(t)) \Big|^{\frac{1}{2}}.\nonumber
\end{align}
Let $1\leq j\leq 2n$. Using $|(p\circ g)'(t) - (p\circ h)'(t)| \leq A$ almost everywhere implies $|h_{j}(t)-g_{j}(t)|\leq A|t|$ for every $t\in I$. Lemma \ref{lipschitzhorizontal} and $\mathrm{Lip}_{\mathbb{H}}(g)\leq S$ give the inequality $\mathrm{Lip}_{\mathbb{E}}(g_{j})\leq S$. Using also $g(0)=0$ then gives $|g_{j}(t)|\leq S|t|$ for $t\in I$. For $1\leq i\leq n$ and $t\in I$:
\begin{align*}
|g_{i}(t)h_{n+i}(t)-g_{n+i}(t)h_{i}(t)| &= |g_{i}(t)(h_{n+i}(t)-g_{n+i}(t)) + g_{n+i}(t)(g_{i}(t)-h_{i}(t))| \\
&\leq S|t| |h_{n+i}(t)-g_{n+i}(t)| + S|t| |g_{i}(t)-h_{i}(t)|\\
&\leq SAt^2.
\end{align*} 
We estimate the final term using Lemma \ref{lift}:
\begin{align*}
|h_{2n+1}(t)-g_{2n+1}(t)| & \leq 2\sum_{i=1}^{n} \int_{0}^{t} |(h_{i}'-g_{i}')(h_{n+i}-g_{n+i}) - (h_{n+i}'-g_{n+i}')(h_{i}-g_{i})| \\
& \leq 4A^2 \sum_{i=1}^{n} \int_{0}^{t} s \dd s\\
& = 2nA^2t^2.
\end{align*}
Combining our estimates of each term in \eqref{angleestimate} gives $d(g(t),h(t))\leq C\sqrt{A}|t|$ for $t\in I$, where $C$ is a constant depending on $S$.
\end{proof}

We will use the maps $x\mapsto \langle x, E(0) \rangle$ for $E\in V$ both as Pansu derivatives and as perturbations to construct an almost maximal directional derivative in the proof of Proposition \ref{DoreMaleva}. We now give simple properties of these maps.

\begin{lemma}\label{lemmascalarlip}
Suppose $E\in V$ with $\omega(E)=1$ and let $L\colon \mathbb{H}^n \to \mathbb{R}$ be the function $L(x)=\langle x, E(0) \rangle$. Then:
\begin{enumerate}
\item $L$ is $\mathbb{H}$-linear and $\mathrm{Lip}_{\mathbb{H}}(L) = 1$,
\item for $x\in\mathbb{H}^n$ and $\widetilde E\in V$:
\[\widetilde{E}L(x)=L(\widetilde{E}(0))=\langle p(\widetilde{E}), p(E) \rangle.\]
\end{enumerate}
\end{lemma}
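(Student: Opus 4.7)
My plan is to handle the two parts separately, exploiting the fact that $E(0)\in\mathbb{R}^{2n+1}$ has vanishing last coordinate (since $E\in V$), so that $L(x)=\langle x,E(0)\rangle=\langle p(x),p(E)\rangle$ depends only on the horizontal projection of $x$.

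For part (1), I would first verify $\mathbb{H}$-linearity by direct computation. Writing out the group law from Definition \ref{Heisenberg}, the first $2n$ coordinates of $xy$ are just the sum of the first $2n$ coordinates of $x$ and $y$; the twisted term appears only in the last coordinate, which $E(0)$ ignores. Hence $L(xy)=\langle p(x)+p(y),p(E)\rangle=L(x)+L(y)$. For dilations, Definition \ref{Dilations} shows $p(\delta_r x)=r\,p(x)$, so $L(\delta_r x)=rL(x)$. For the Lipschitz estimate, the inequality $d(x,y)\ge|p(y)-p(x)|$ noted right after Definition \ref{carnotdistance}, combined with Cauchy--Schwarz and $|p(E)|=\omega(E)=1$, yields
\[|L(x)-L(y)|=|\langle p(x)-p(y),p(E)\rangle|\le|p(x)-p(y)|\le d(x,y),\]
so $\mathrm{Lip}_{\mathbb{H}}(L)\le 1$. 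To see this is sharp, I would plug in the pair $0$ and $tE(0)$ for $t>0$: Lemma \ref{horizontaldistances} gives $d(tE(0))=t\omega(E)=t$, while $L(tE(0))=t|p(E)|^2=t$, so the ratio equals $1$.

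For part (2), I would compute $\widetilde{E}L(x)$ using Definition \ref{defdirectionalderivative} with the horizontal line $\gamma(t)=x+t\widetilde{E}(x)$, which is a Lipschitz horizontal curve with $\gamma(0)=x$ and $\gamma'(0)=\widetilde{E}(x)$. The key observation, already noted in the paper right after \eqref{linestolines}, is that $p(\widetilde{E}(x))=p(\widetilde{E})$ is independent of $x$. Therefore
\[L(x+t\widetilde{E}(x))=\langle x+t\widetilde{E}(x),E(0)\rangle=L(x)+t\langle p(\widetilde{E}),p(E)\rangle,\]
since the $(2n{+}1)$-st coordinate of $E(0)$ vanishes and the first $2n$ components of $\widetilde{E}(x)$ coincide with $p(\widetilde{E})$. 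Dividing by $t$ and taking the limit gives $\widetilde{E}L(x)=\langle p(\widetilde{E}),p(E)\rangle$, and the same computation applied at $x=0$ identifies this quantity with $L(\widetilde{E}(0))$.

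There is no real obstacle here; the whole lemma is essentially bookkeeping between the vector field $E\in V$, the vector $E(0)\in\mathbb{R}^{2n+1}$ whose last coordinate is zero, and the horizontal projection $p(E)\in\mathbb{R}^{2n}$. The mildly subtle point is part (1)'s lower bound on the Lipschitz constant, which I intend to obtain cleanly by invoking Lemma \ref{horizontaldistances} rather than attempting a general reverse inequality.
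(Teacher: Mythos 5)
Your proposal is correct and follows essentially the same route as the paper's proof: $\mathbb{H}$-linearity via the Euclidean structure of the first $2n$ coordinates, the upper Lipschitz bound via $|\langle\cdot,E(0)\rangle|\le|p(\cdot)|\le d(\cdot)$, sharpness via Lemma \ref{horizontaldistances} applied to points along the direction $E$, and part (2) by the difference quotient along the horizontal line $t\mapsto x+t\widetilde{E}(x)$. No gaps.
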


\begin{proof}
Since $E\in V$ we know that the final coordinate of $E(0)$ is $0$. Suppose $x, y\in \mathbb{H}^n$ and $r>0$. In the first $2n$ coordinates the group product and dilations are Euclidean, hence:
\[L(x)+L(y)=\langle x+y, E(0) \rangle =  \langle xy, E(0) \rangle =L(xy)\]
and
\[L(\delta_{r}(x))=\langle \delta_{r}(x), E(0) \rangle = r\langle x, E(0) \rangle = rL(x).\]
This shows that $L$ is $\mathbb{H}$-linear.

It follows from $\mathbb{H}$-linearity that $L(x)-L(y)= L(xy^{-1})$. To bound the Lipschitz constant from above it is enough to use $\omega(E)=1$ to observe:
\[|\langle x, E(0) \rangle |\leq |p(x)|\leq d(x).\]
Conversely, $L(E(0))-L(0)=\langle E(0), E(0) \rangle =1$, so equals $d(E(0))$ by Lemma \ref{horizontaldistances}. This proves (1).

To prove (2) we observe:
\begin{align*}
\widetilde{E}L(x)&=\lim_{t\to 0} \frac{L(x + t\widetilde{E}(x)) - L(x)}{t}\\
&=\lim_{t\to 0}\Big\langle \frac{x+t \widetilde{E}(x) -x}{t},E(0) \Big\rangle\\
&=\langle \widetilde{E}(x),E(0)\rangle\\
&=\langle \widetilde{E}(0), E(0)\rangle.
\end{align*}
\end{proof}

A key feature of the special pairs used to define almost maximal directional derivatives will be that changes in difference quotients are bounded by changes in directional derivatives. We use the following lemma \cite[Lemma 3.4]{Pre90}.

\begin{lemma}\label{preissmeanvalue}
Suppose $|\zeta|<s<\rho$, $0<v<1/32$, $\sigma>0$ and $L>0$ are real numbers. Let $\varphi, \psi\colon \mathbb{R} \to \mathbb{R}$ satisfy $\mathrm{Lip}_{\mathbb{E}}(\varphi)+\mathrm{Lip}_{\mathbb{E}}(\psi)\leq L$, $\varphi(t)=\psi(t)$ for $|t|\geq s$ and $\varphi(\zeta)\neq \psi(\zeta)$. Suppose, moreover, that $\psi'(0)$ exists and that
\[|\psi(t)-\psi(0)-t\psi'(0)|\leq \sigma L|t|\]
whenever $|t|\leq \rho$,
\[\rho\geq s\sqrt{(sL)/(v|\varphi(\zeta)-\psi(\zeta)|)},\]
and
\[\sigma \leq v^{3}\Big( \frac{\varphi(\zeta)-\psi(\zeta)}{sL} \Big)^{2}.\]
Then there is $\tau\in (-s,s)\setminus \{\zeta\}$ such that $\varphi'(\tau)$ exists,
\[\varphi'(\tau)\geq \psi'(0)+v|\varphi(\zeta)-\psi(\zeta)|/s,\]
and
\[|(\varphi(\tau+t)-\varphi(\tau))-(\psi(t)-\psi(0))|\leq 4(1+20v)\sqrt{(\varphi'(\tau)-\psi'(0))L}|t|\]
for every $t\in \mathbb{R}$.
\end{lemma}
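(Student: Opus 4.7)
The plan is to treat this as a delicate quantitative maximum principle: the difference $\varphi-\psi$ is forced to vanish outside $[-s,s]$ but carries a non‑trivial value at $\zeta$, while $\psi$ is nearly affine near $0$; together these constraints must produce a point $\tau$ where $\varphi' - \psi'(0)$ is quantitatively large and, in addition, where the full increment of $\varphi$ at $\tau$ matches that of $\psi$ at $0$ with an optimal square‑root rate. Throughout, assume without loss of generality that $D:=\varphi(\zeta)-\psi(\zeta)>0$ (otherwise replace $\varphi,\psi$ by $-\varphi,-\psi$). Introduce the auxiliary function $G(t):=\varphi(t)-\psi'(0)t$, so that $G'(\tau)=\varphi'(\tau)-\psi'(0)$, and reformulate both conclusions in terms of $G$ and of the small error term $\tilde\psi(t):=\psi(t)-\psi(0)-\psi'(0)t$, which satisfies $|\tilde\psi(t)|\le \sigma L|t|$ on $[-\rho,\rho]$.

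The first step is the lower bound in (i). Since $\varphi=\psi$ at $\pm s$, the $\sigma$‑hypothesis gives $|G(\pm s)-\psi(0)|\le \sigma Ls$ and $|G(\zeta)-\psi(0)-D|\le \sigma Ls$. The numerical hypothesis $\sigma\le v^{3}(D/sL)^{2}$ together with $v<1/32$ and the trivial bound $D\le 2sL$ yields $2\sigma L s \le 4v^{3}D \ll D$, so $G(\zeta)-G(-s)\ge (1-4v^{3})D$. Absolute continuity of $G$ on $[-s,\zeta]$ then produces $\tau_{0}\in(-s,\zeta)$ where $G'(\tau_{0})$ exists and $G'(\tau_{0})\ge (1-4v^{3})D/(2s)$, which for $v<1/32$ easily exceeds $vD/s$.

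The second step, which is the main content, is the simultaneous extraction of the increment bound (ii). The standard maximal‑slope technique is to select $\tau$ not merely as an arbitrary point of large derivative but as a maximizer of an auxiliary functional of the form $F_{\beta}(t):=G(t)-\beta t$, for a slope parameter $\beta$ chosen just below the sharpest chord slope available. At an interior maximizer $\tau\ne\zeta$ one automatically gets $G'(\tau)\ge\beta$ and one‑sided estimates $G(\tau+t)-G(\tau)\le\beta t$ for $t$ with $\tau+t\in[-s,s]$; combining with the Lipschitz lower bound $G(\tau+t)-G(\tau)\ge -L|t|$ and applying triangle inequality against $|\tilde\psi(t)|\le\sigma L|t|$ converts this into an upper bound on $|\varphi(\tau+t)-\varphi(\tau)-(\psi(t)-\psi(0))|$ in the regime $|t|\le\rho$. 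For the complementary regime $|t|>\rho$, or when $\tau+t\notin[-s,s]$, one exploits the identity $\varphi=\psi$ outside $[-s,s]$ and the global $L$-Lipschitz property, together with the inequality $\rho\ge s\sqrt{sL/(vD)}$, to check that the square‑root bound $4(1+20v)\sqrt{G'(\tau) L}\,|t|$ has already grown large enough to absorb the trivial linear bound $2L|t|$ inherited from the Lipschitz estimates.

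The main obstacle is the sharp calibration that yields exactly the constant $4(1+20v)$ and the square‑root rate $\sqrt{(\varphi'(\tau)-\psi'(0))L}$. The $1/2$ exponent is not accidental: one is balancing two competing linear errors whose product is controlled, and the geometric mean emerges as the optimum. Concretely, $\rho\ge s\sqrt{sL/(vD)}$ is precisely what forces the regime $|t|\le\rho$ to be wide enough for the small error $\sigma L|t|$ to be dominated by $\sqrt{\beta L}\,|t|$ when $\beta\gtrsim vD/s$, while $\sigma \le v^{3}(D/sL)^{2}$ controls the approximation error at the scale $|t|\sim s$. Pushing the maximality inequality through with all constants tracked, choosing $\beta$ optimally, and verifying that no factor worse than $4(1+20v)$ is acquired in any of the triangle inequalities is the bulk of the argument; the numerology has evidently been engineered so the estimates close exactly.
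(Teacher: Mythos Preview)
The paper does not prove this lemma at all: it is quoted verbatim as \cite[Lemma~3.4]{Pre90} and used as a black box, so there is no ``paper's own proof'' to compare your proposal against. What can be assessed is whether your sketch stands on its own as a proof of Preiss's lemma.

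Your Step~1 is fine: a mean-value argument on $G=\varphi-\psi'(0)\,\cdot$ over $[-s,\zeta]$ does produce a point of derivative at least $vD/s$. The difficulty is entirely in Step~2, and here the sketch has a real gap. Maximizing $F_\beta(t)=G(t)-\beta t$ for a single fixed $\beta$ gives, at an interior maximizer $\tau$, the one-sided inequality $G(\tau+t)-G(\tau)\le\beta t$ for all admissible $t$. That is a linear bound with constant $\beta\approx G'(\tau)$, not $\sqrt{G'(\tau)L}$; and for the opposite inequality you invoke only the crude Lipschitz bound $G(\tau+t)-G(\tau)\ge -L|t|$, which is of order $L$, again not $\sqrt{G'(\tau)L}$. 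Neither side, as written, produces the square-root rate, and your appeal to ``the geometric mean emerges as the optimum'' does not identify any concrete mechanism that interpolates between the two linear estimates. In Preiss's actual argument the selection of $\tau$ is more elaborate: one works with a one-parameter family of constraints and chooses $\tau$ so that a certain maximality holds \emph{at the scale matched to} $G'(\tau)$ itself, which is what couples the derivative gain to the increment bound and forces the $\sqrt{\,\cdot\,}$. Your proposal names the right ingredients (maximization, balance of scales, the role of the hypotheses on $\rho$ and $\sigma$) but does not supply the selection device that makes the two conclusions hold simultaneously with the stated constant; as it stands it is an outline rather than a proof.
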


\begin{remark}\label{meanvalueremark}
By examining the proof of Lemma \ref{preissmeanvalue} in \cite{Pre90} it is easy to see that $\tau$ can additionally be chosen outside a given Lebesgue measure zero subset of $\mathbb{R}$. A stronger observation, that $\tau$ can be chosen outside a set of sufficiently small yet positive measure, is used in \cite{Pre90} to prove \cite[Theorem 6.3]{Pre90}.
\end{remark}

We can now prove that existence of an almost locally maximal horizontal directional derivative implies Pansu differentiability. The argument is based on that of \cite[Theorem 4.1]{Pre90}, but we use our analysis of the Carnot-Carath\'{e}odory distance and use exclusively horizontal curves and directional derivatives in horizontal directions. 

\begin{notation}\label{D^f}
Fix a Lebesgue null $G_{\delta}$ set $N\subset \mathbb{H}^n$ as in Lemma \ref{uds} for the remainder of the article. For any Lipschitz function $f:\mathbb{H}^n\to \mathbb{R}$ define:
\[D^{f}:=\{ (x,E) \in N\times V\colon \omega(E)=1,\, Ef(x) \mbox{ exists}\}.\]
\end{notation}

\begin{theorem}\label{almostmaximalityimpliesdifferentiability}
Let $f\colon \mathbb{H}^n\to \mathbb{R}$ be a Lipschitz function with $\mathrm{Lip}_{\mathbb{H}}(f) \leq 1/2$. Suppose $(x_{\ast}, E_{\ast})\in D^{f}$. Let $M$ denote the set of pairs $(x,E)\in D^{f}$ such that $Ef(x)\geq E_{\ast}f(x_{\ast})$ and
\begin{align*}
& |(f(x + tE_{\ast}(x))-f(x)) - (f(x_{\ast} + tE_{\ast}(x_{\ast}))-f(x_{\ast}))| \\
& \qquad \leq 6|t| (  (Ef(x)-E_{\ast}f(x_{\ast}))\mathrm{Lip}_{\mathbb{H}}(f))^{\frac{1}{4}}
\end{align*}
for every $t\in (-1,1)$. If
\[\lim_{\delta \downarrow 0} \sup \{Ef(x)\colon (x,E)\in M \mbox{ and }d(x,x_{\ast})\leq \delta\}\leq E_{\ast}f(x_{\ast})\]
then $f$ is Pansu differentiable at $x_{\ast}$ with Pansu derivative
\[L(x)=E_{\ast}f(x_{\ast})\langle x , E_{\ast}(0) \rangle=E_{\ast}f(x_{\ast})\langle p(x) , p(E_{\ast}) \rangle.\]
\end{theorem}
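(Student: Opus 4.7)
I plan to argue by contradiction, adapting the strategy of \cite[Theorem 4.1]{Pre90}. If $f$ were not Pansu differentiable at $x_*$ with derivative $L$, there would exist $\epsilon_0>0$ and $w_k\to 0$ satisfying $|f(x_*w_k)-f(x_*)-L(w_k)|\geq \epsilon_0 d(w_k)$. Writing $w_k = \delta_{r_k}(u_k)$ with $d(u_k)=1$, my goal is to construct for every large $k$ a competitor pair $(y_k, F_k)\in M$ with $d(y_k, x_*)\to 0$ yet $F_k f(y_k)\geq E_*f(x_*)+c$ for a fixed $c>0$; this contradicts the almost-maximality hypothesis.

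The first main step is to modify the horizontal line $t \mapsto x_*+tE_*(x_*)$ using Lemma \ref{newcurveg} at parameters $(x_*, u_k, E_*, r_k, \Delta)$, for a small fixed $\Delta>0$ and auxiliary $\eta>0$. The resulting Lipschitz horizontal curve $g_k$ agrees with the line outside $[-s_k, s_k]$, where $s_k := r_k/\Delta$, passes through $x_*w_k$ at $t=\zeta_k:=r_k\langle u_k, E_*(0)\rangle$, and has horizontal tangent within $C_\mathrm{m}\Delta$ of $p(E_*)$. Setting $\psi(t):=f(x_*+tE_*(x_*))-f(x_*)$ and $\varphi_k(t):=f(g_k(t))-f(x_*)$, the existence of $\psi'(0)=E_*f(x_*)$ together with $L(w_k)=\zeta_k E_*f(x_*)$ and the contradiction hypothesis give $|\varphi_k(\zeta_k)-\psi(\zeta_k)|\geq(\epsilon_0/2)r_k$ for large $k$. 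I then apply Lemma \ref{preissmeanvalue} with a fixed small $v<1/32$ and $\sigma, \rho$ chosen compatibly with the Taylor remainder of $\psi$ at $0$; this produces $\tau_k\in(-s_k,s_k)$ where $\varphi_k'(\tau_k)$ exists,
\[\varphi_k'(\tau_k)\geq E_*f(x_*)+\frac{v\epsilon_0}{2}\Delta,\]
and the global comparison $|(\varphi_k(\tau_k+t)-\varphi_k(\tau_k))-(\psi(t)-\psi(0))|\leq 4(1+20v)\sqrt{(\varphi_k'(\tau_k)-E_*f(x_*))L}\,|t|$ holds.

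The second main step is to verify that the pair $(g_k(\tau_k), F_k)$, where $F_k\in V$ is the horizontal direction parallel to $g_k'(\tau_k)$ with $\omega(F_k)=1$, lies in $M$. By Lemma \ref{welldefined}, $F_k f(g_k(\tau_k))=\varphi_k'(\tau_k)/|(p\circ g_k)'(\tau_k)|$, and choosing $\eta$ small relative to $v\epsilon_0$ preserves most of the Preiss gain: $F_k f(g_k(\tau_k))\geq E_*f(x_*)+(v\epsilon_0/4)\Delta$ for small $\Delta$. Also $d(g_k(\tau_k), x_*)\to 0$, since $g_k$ stays within $O(s_k)$ of $x_*$ on $[-s_k, s_k]$. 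For the $M$-inequality, I apply Lemma \ref{closedirectioncloseposition} with $A=O(\Delta)$ to compare $g_k$ to the line $g_k(\tau_k)+tE_*(g_k(\tau_k))$: the two curves stay within $C_\mathrm{a}\sqrt{C_\mathrm{m}\Delta}\,|t|$, and combining this with Preiss' comparison yields LHS of the $M$-inequality of order $\sqrt{\Delta}\,|t|$. Since $(F_k f(g_k(\tau_k))-E_*f(x_*))\mathrm{Lip}_\mathbb{H}(f)\sim\Delta$ lies in $(0,1)$, the target RHS is of order $\Delta^{1/4}|t|\geq\sqrt{\Delta}|t|$, so the $M$-bound holds for $\Delta$ small, completing the contradiction.

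The hard part will be ensuring $g_k(\tau_k)\in N$: since $x_*$ is merely in $N$ and need not be rational, the curve $g_k$ is not automatically a concatenation of curves from Lemma \ref{uds}. My plan is to run the entire construction with rational approximations $\widetilde x_*, \widetilde u_k, \widetilde E_*, \widetilde r_k, \widetilde s_k$ of the data, so that the final clause of Lemma \ref{newcurveg} applies and the whole of $g_k$ lies in $N$; the contradiction hypothesis transfers via the Lipschitz continuity of $f$ with a penalty absorbed by the factor $\epsilon_0/2$, and the flexibility afforded by Remark \ref{meanvalueremark} allows $\tau_k$ to be chosen outside the finite set where $g_k'$ may fail to exist. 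A further technical issue is jointly calibrating $v, \eta, \sigma, \rho, \Delta$ so that Preiss' hypotheses persist as $r_k\to 0$, the Preiss gain $v\epsilon_0\Delta/2$ dominates the $\eta\Delta$-renormalization cost, and the $\sqrt{\Delta}$-tangent deviation remains dominated by $\Delta^{1/4}$ in the $M$-bound.
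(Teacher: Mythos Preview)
Your overall strategy matches the paper's proof closely: contradiction, perturbed horizontal curve via Lemma \ref{newcurveg}, application of Lemma \ref{preissmeanvalue}, and verification of $M$-membership via Lemma \ref{closedirectioncloseposition}. You have also correctly identified that rational approximations $\widetilde x_*,\widetilde E_*$ are forced in order to keep the image of $g$ inside $N$.

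There is, however, one genuine gap that the paper handles and your plan does not. Once $g$ is built from the rational data, it satisfies $g(t)=\widetilde x_*+t\widetilde E_*(\widetilde x_*)$ for $|t|\ge s$, not $x_*+tE_*(x_*)$. With your choice $\psi(t)=f(x_*+tE_*(x_*))-f(x_*)$ you therefore lose the hypothesis $\varphi(t)=\psi(t)$ for $|t|\ge s$ in Lemma \ref{preissmeanvalue}. You cannot repair this by switching to $\widetilde\psi(t)=f(\widetilde x_*+t\widetilde E_*(\widetilde x_*))$, because $\widetilde E_*f(\widetilde x_*)$ need not exist, so neither $\widetilde\psi'(0)$ nor the required estimate $|\widetilde\psi(t)-\widetilde\psi(0)-t\widetilde\psi'(0)|\le\sigma L|t|$ is available.

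The paper's resolution is to introduce an auxiliary Lipschitz horizontal curve $h$ with $h(t)=\widetilde x_*+t\widetilde E_*(\widetilde x_*)$ for $|t|\ge s$, $h(t)=x_*+tE_*(x_*)$ for $|t|\le s/2$, and a controlled interpolation (built from Lemma \ref{goodcurve}) on $s/2<|t|<s$ whose horizontal tangent stays within an arbitrarily small tolerance of $p(E_*)$. One then takes $\psi=f\circ h$: this restores $\varphi=\psi$ for $|t|\ge s$, keeps $\psi'(0)=E_*f(x_*)$, and the required bound $|\psi(t)-\psi(0)-t\psi'(0)|\le\sigma L|t|$ for $|t|\le\rho$ follows by combining the directional-derivative estimate on $[-s/2,s/2]$ with Lemma \ref{closedirectioncloseposition} on the interpolation pieces. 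The same closeness of $h$ to the line $x_*+tE_*(x_*)$ is used once more at the end to convert the Preiss comparison (which is against $\psi=f\circ h$) into the $M$-inequality (which is against $f(x_*+tE_*(x_*))$). Your plan should incorporate this intermediate curve; everything else you wrote goes through essentially as in the paper.
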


\begin{remark}
Since we will apply Lemma \ref{preissmeanvalue}, it may seem more intuitive to instead bound $|(f(x + tE(x))-f(x)) - (f(x_{\ast}+tE_{\ast}(x_{\ast}))-f(x_{\ast}))|$ in the statement of Theorem \ref{almostmaximalityimpliesdifferentiability}. The precise form in Theorem \ref{almostmaximalityimpliesdifferentiability} will be useful when we construct an almost locally maximal horizontal directional derivative in Proposition \ref{DoreMaleva}.
\end{remark}

We will prove Theorem \ref{almostmaximalityimpliesdifferentiability} by contradiction. We first use Lemma \ref{newcurveg} to modify the line $x_{\ast} + tE_{\ast}(x_{\ast})$ to form a Lipschitz horizontal curve $g$ in $N$ which passes through a nearby point showing non-Pansu differentiability at $x_{\ast}$. We then apply Lemma \ref{preissmeanvalue} with $\varphi=f\circ g$ to obtain a large directional derivative along $g$ and estimates for difference quotients in the new direction. We then develop these estimates to show that the new point and direction form a pair in $M$. This shows that there is a nearby pair in $M$ giving a larger directional derivative than $(x_{\ast},E_{\ast})$, a contradiction.

\begin{proof}[Proof of Theorem \ref{almostmaximalityimpliesdifferentiability}]
We can assume $\mathrm{Lip}_{\mathbb{H}}(f)>0$ since otherwise the statement is trivial. Let $\varepsilon>0$ and fix various parameters as follows.

\vspace{0.2cm}

\emph{Parameters.} Choose:
\begin{enumerate}
\item $0< v<1/32$ such that $4(1+20v)\sqrt{(2+v)/(1-v)}+v < 6$,
\item $\eta=\varepsilon v^{3}/3200$,
\item $\Delta(\eta/2)$, $C_{\mathrm{m}}$ and $C_{\mathrm{a}}=C_{\mathrm{angle}}(2)$ using Lemma \ref{newcurveg} and Lemma \ref{closedirectioncloseposition},
\item rational $0< \Delta < \min \{\eta v^2,\, \Delta(\eta/2),\, \varepsilon v^{5}/(8C_{\mathrm{m}}^{2}C_{\mathrm{a}}^{4}\mathrm{Lip}_{\mathbb{H}}(f)^3) \}$,
\item $\sigma=9\varepsilon^{2}v^{5}\Delta^2/256$,
\item $0<\rho<1$ such that
\begin{equation}\label{directionaldifferentiability}
|f(x_{\ast} + tE_{\ast}(x_{\ast})) - f(x_{\ast})-tE_{\ast}f(x_{\ast})|\leq \sigma \mathrm{Lip}_{\mathbb{H}}(f)|t|
\end{equation}
for every $|t|\leq \rho$,
\item $0<\delta < \rho \sqrt{3\varepsilon v\Delta^{3}}/4$ such that
\[Ef(x)<E_{\ast}f(x_{\ast})+\varepsilon v\Delta/2\]
whenever $(x,E)\in M$ and $d(x,x_{\ast})\leq 4\delta(1+1/\Delta)$.
\end{enumerate}

To prove Pansu differentiability of $f$ at $x_{\ast}$ we will show:
\[|f(x_{\ast}\delta_{t}(h))-f(x_{\ast})-tE_{\ast}f(x_{\ast})\langle h, E_{\ast}(0) \rangle |\leq \varepsilon t\]
whenever $d(h) \leq 1$ and $0<t<\delta$. Suppose this is not true. Then there exists $u\in \mathbb{Q}^{2n+1}$ with $d(u) \leq 1$ and rational $0<r<\delta$ such that:
\begin{equation}\label{badpoint}
|f(x_{\ast}\delta_{r}(u))-f(x_{\ast})-rE_{\ast}f(x_{\ast})\langle u, E_{\ast}(0) \rangle|> \varepsilon r.
\end{equation}
Let $s=r/ \Delta \in \mathbb{Q}$. We next construct Lipschitz horizontal curves $g$ and $h$ for which we can apply Lemma \ref{preissmeanvalue} with $\varphi:=f\circ g$ and $\psi:=f\circ h$.

\medskip

\emph{Construction of $g$.} To ensure that the image of $g$ is a subset of the set $N$, we first introduce rational approximations to $x_{\ast}$ and $E_{\ast}$. 

Since the Carnot-Carath\'{e}odory and Euclidean distances are topologically equivalent, $\mathbb{Q}^{2n+1}$ is dense in $\mathbb{R}^{2n+1}$ with respect to the distance $d$. The set
\[ \{E\in V\colon \omega(E)=1, \, E \mbox{ a rational linear combination of }X_{i}, Y_{i},\, 1\leq i\leq n\}\]
is dense in $\{E\in V\colon \omega(E)=1\}$ with respect to the norm $\omega$. To see this, suppose $E\in V$ satisfies $\omega(E)=1$. The Euclidean sphere $\mathbb{S}^{2n-1} \subset \mathbb{R}^{2n}$ contains a dense set $S$ of points with rational coordinates. This fact is well known, e.g. one can use stereographic projection. Let $q=(q_1,\ldots, q_{2n})\in \mathbb{S}^{2n-1}$ be the coefficients of $E$ in the basis $\{X_{i}, Y_{i}: 1\leq i\leq n\}$. Take $\widetilde{q}\in S$ such that $| q-\widetilde{q}|$ is small and define the rational approximation of $E$ as the linear combination of $\{X_{i}, Y_{i}: 1\leq i\leq n\}$ with coefficients $\widetilde{q}_i$. 

Define
\begin{equation}\label{A1}A_{1}=(\eta \Delta/C_{\mathrm{a}})^{2}\end{equation}
and
\begin{equation}\label{A2}A_{2}=\Big(6- \Big( 4(1+20v) \Big( \frac{2+v}{1-v} \Big)^{\frac{1}{2}}+v\Big)\Big)^{2} \frac{(  \varepsilon v\Delta/2   )\mathrm{Lip}_{\mathbb{H}}(f))^{\frac{1}{2}}}{C_{\mathrm{a}}^{2}\mathrm{Lip}_{\mathbb{H}}(f)^{2}}.\end{equation}
Notice $A_{1}, A_{2}>0$ using, in particular, our choice of $v$. Choose $\widetilde{x}_{\ast}\in \mathbb{Q}^{2n+1}$ and $\widetilde{E}_{\ast}\in V$ with $\omega( \widetilde{E}_{\ast})=1$, a rational linear combination of $\{X_{i}, Y_{i}: 1\leq i\leq n\}$, sufficiently close to $x_{\ast}$ and $E_{\ast}$ to ensure:
\begin{equation}\label{nowlistingthese} d(\widetilde{x}_{\ast}\delta_{r}(u),x_{\ast})\leq 2r,\end{equation}
\begin{equation}\label{lista} d(\widetilde{x}_{\ast}\delta_{r}(u), x_{\ast}\delta_{r}(u))\leq \sigma r,\end{equation}
\begin{equation}\label{listvector1} \omega(\widetilde{E}_{\ast}-E_{\ast})\leq \min \{ \sigma, \, C_{\mathrm{m}}\Delta,\, A_{1},\, A_{2} \},\end{equation}
which is possible since all terms on the right side of the above inequalities are strictly positive.
Additionally, we choose $\widetilde{x}_{\ast}$ and $\widetilde{E}_{\ast}$ sufficiently close to $x_{\ast}$ and $E_{\ast}$ so that if $a,b \in \mathbb{R}^{n}$ and $c\in \mathbb{R}$ are defined by
\[(a,b,c)=(x_{\ast}+ (s/2)E_{\ast}(x_{\ast}))^{-1}(\widetilde{x}_{\ast}+s\widetilde{E}_{\ast}(\widetilde{x}_{\ast})),\]
then:
\begin{equation}\label{weird1}(a^2+b^2)^{\frac{1}{2}}\left(1+ \frac{c^2}{(a^2+b^2)^2} + \frac{4c^2}{(a^2+b^2)} \right)^{\frac{1}{2}}\leq \frac{s}{2}\left(1+\frac{\eta \Delta}{2}\right)\end{equation}
and
\begin{equation}\label{weird2} \frac{c}{(a^2+b^2)^{\frac{1}{2}}}(1+4(a^2+b^2))^{\frac{1}{2}}\leq \frac{s}{2} \min \{A_{1},\, A_{2}\}.\end{equation}
To show these requirements can be satisfied, we remark that
\begin{align*}
(x_{\ast}+ (s/2)E_{\ast}(x_{\ast}))^{-1}(x_{\ast}+sE_{\ast}(x_{\ast})) &= (x_{\ast}\exp((s/2)E_{\ast}))^{-1} (x_{\ast}\exp(sE_{\ast}))\\
&= \exp(-(s/2)E_{\ast})x_{\ast}^{-1}x_{\ast}\exp(sE_{\ast})\\
&=\exp((s/2)E_{\ast})\\
&=(s/2)E_{\ast}(0),
\end{align*}
which has final coordinate zero and satisfies $|p((s/2)E_{\ast}(0))|=s/2$.
We also choose $\widetilde{x}_{\ast}$ and $\widetilde{E}_{\ast}$ so that \eqref{weird1} and \eqref{weird2} hold if instead:
\[(a,b,c)=(\widetilde{x}_{\ast} -s\widetilde{E}_{\ast}(\widetilde{x}_{\ast}))^{-1} (x_{\ast} -(s/2)E_{\ast}(x_{\ast})),\]
with a similar justification.

Note $0<r<\Delta$ and recall $s=r/\Delta$ is rational. To construct $g$ we apply Lemma \ref{newcurveg} with the following parameters:
\begin{itemize}
\item $\eta, r, \Delta$ and $u$ as defined above in \eqref{badpoint},
\item $x=\widetilde{x}_{\ast}$ and $E=\widetilde{E}_{\ast}$.
\end{itemize}
This gives a Lipschitz horizontal curve $g\colon \mathbb{R} \to \mathbb{H}^{n}$ with the following properties:
\begin{enumerate}
\item $g(t)=\widetilde{x}_{\ast} + t\widetilde{E}_{\ast}(x_{\ast})$ for $|t|\geq s$,
\item $g(\zeta)=\widetilde{x}_{\ast}\delta_{r}(u)$, where $\zeta := r\langle u,\widetilde{E}_{\ast}(0)\rangle$,
\item $\mathrm{Lip}_{\mathbb{H}}(g)\leq 1+\eta\Delta/2$,
\item $g'(t)$ exists and $|(p\circ g)'(t) - p(\widetilde{E}_{\ast})| \leq C_{\mathrm{m}}\Delta$ for $t\in \mathbb{R}$ outside a finite set.
\end{enumerate}
Since the relevant quantities were rational and the set $N$ was chosen using Lemma \ref{uds}, we also know that the image of $g$ is contained in $N$.

\medskip

\emph{Construction of $h$.} Denote $q_1=x_{\ast}+ (s/2)E_{\ast}(x_{\ast})$ and $q_2=\widetilde{x}_{\ast}+s\widetilde{E}_{\ast}(\widetilde{x}_{\ast})$. Applying Lemma \ref{goodcurve} with $(a,b,c)=q_{1}^{-1}q_{2}$ and using inequalities \eqref{weird1} and \eqref{weird2} gives $\widetilde{h}_{1}\colon [0,1]\to \mathbb{H}^{n}$ such that:
\begin{enumerate}
\item $\widetilde{h}_{1}$ is a Lipschitz horizontal curve joining $0$ to $q_{1}^{-1}q_{2}$,
\item $\mathrm{Lip}_{\mathbb{H}}(\widetilde{h}_{1})\leq (s/2)(1+\eta\Delta/2)$,
\item $\widetilde{h}_{1}'(t)$ exists and satisfies
\[|(p\circ \widetilde{h}_{1})'(t)-(s/2)p(E_{\ast})|\leq (s/2) \min \{A_{1}, A_{2}\}\]
for $t\in [0,1]\setminus \{1/2\}$.
\end{enumerate}
Define $h_{1}\colon [s/2,s]\to \mathbb{H}^{n}$ by:
\[h_{1}(t)=q_{1}\widetilde{h}_{1}((2/s)(t-s/2)).\]
Then $h_{1}$ is a Lipschitz horizontal curve joining $q_1$ to $q_2$ with
\[\mathrm{Lip}_{\mathbb{H}}(h_{1})\leq 1+\eta\Delta/2.\]
The derivative $h_{1}'(t)$ exists and satisfies
\[|(p\circ h_{1})'(t)- p(E_{\ast})|\leq \min \{A_{1}, A_{2}\}\]
for $t\in [s/2,s]\setminus \{3s/4\}$, since left translations act linearly in the first $2n$ coordinates.

Similarly, there is a Lipschitz horizontal curve $h_{2}\colon [-s,-s/2]\to \mathbb{H}^{n}$ joining $\widetilde{x}_{\ast} -s\widetilde{E}_{\ast}(\widetilde{x}_{\ast})$ to $x_{\ast} -(s/2)E_{\ast}(x_{\ast})$ satisfying
\[\mathrm{Lip}_{\mathbb{H}}(h_{2})\leq 1+\eta\Delta/2\]
and
\[|(p\circ h_{2})'(t)- p(E_{\ast})|\leq \min \{A_{1}, A_{2}\}\]
for $t\in [-s,-s/2]\setminus \{-3s/4\}$. 

Define a Lipschitz horizontal curve $h\colon \mathbb{R}\to \mathbb{H}^{n}$ by:
\[h(t)= \begin{cases} \widetilde{x}_{\ast} + t\widetilde{E}_{\ast}(\widetilde{x}_{\ast}) &\mbox{if }|t|\geq s,\\
x_{\ast} + tE_{\ast}(x_{\ast}) &\mbox{if }|t|\leq s/2,\\
h_{1}(t) &\mbox{if }s/2<t<s,\\
h_{2}(t) &\mbox{if }-s<t<-s/2.\end{cases}\]
Using the inequalities $\mathrm{Lip}_{\mathbb{H}}(h_{1}), \mathrm{Lip}_{\mathbb{H}}(h_{2})\leq 1+\eta\Delta/2$ and $\omega(E_{\ast}), \omega(\widetilde{E}_{\ast})\leq 1$ gives:
\[\mathrm{Lip}_{\mathbb{H}}(h)\leq 1+\eta\Delta/2.\]
Also $h'(t)$ exists for $t\in \mathbb{R}\setminus \{\pm  3s/4, \pm s/2, \pm s\}$ and
\[|(p\circ h)'(t)-p(E_{\ast})|\leq \min \{A_{1}, A_{2} \},\]
using the corresponding bounds for $h_{1}, h_{2}$ and $\omega(E_{\ast}-\widetilde{E}_{\ast})\leq \min \{A_{1}, A_{2} \}$ from \eqref{listvector1}.

\medskip

\emph{Application of Lemma \ref{preissmeanvalue}.} We now prove that the assumptions of Lemma \ref{preissmeanvalue} hold with $L:=(2+\eta \Delta)\mathrm{Lip}_{\mathbb{H}}(f)$, $\varphi:=f\circ g$ and $\psi:=f\circ h$. The inequalities $|\zeta|<s<\rho$, $0<v<1/32$ and the equality $\varphi(t)=\psi(t)$ for $|t|\geq s$ are clear. Using $\mathrm{Lip}_{\mathbb{H}}(g), \mathrm{Lip}_{\mathbb{H}}(h) \leq 1+\eta\Delta/2$ gives $\mathrm{Lip}_{\mathbb{E}}(\varphi)+\mathrm{Lip}_{\mathbb{E}}(\psi)\leq L$. 

Notice that \eqref{lista} implies:
\[ |f(\widetilde{x}_{\ast}\delta_{r}(u)) - f(x_{\ast}\delta_{r}(u))| \leq \sigma r \mathrm{Lip}_{\mathbb{H}}(f).\]
Since $|\zeta|\leq r\leq \rho$, we may substitute $t=\zeta$ in \eqref{directionaldifferentiability} to obtain
\begin{align*}
|f(x_{\ast} + \zeta E_{\ast}(x_{\ast}))-f(x_{\ast})-\zeta E_{\ast}f(x_{\ast})| &\leq \sigma \mathrm{Lip}_{\mathbb{H}}(f)|\zeta| \\
&\leq \sigma r\mathrm{Lip}_{\mathbb{H}}(f).
\end{align*}
Next note that \eqref{listvector1} implies $|\widetilde{E}_{\ast}(0)-E_{\ast}(0)|\leq \sigma$. We use also $\zeta=r\langle u, \widetilde{E}_{\ast}(0)\rangle$ to estimate as follows:
\begin{align*}
|\zeta E_{\ast}f(x_{\ast}) - r\langle u,E_{\ast}(0)\rangle E_{\ast}f(x_{\ast})| & = r|E_{\ast}f(x_{\ast})||\langle u, \widetilde{E}_{\ast}(0) -E_{\ast}(0)\rangle|\\
&\leq r\mathrm{Lip}_{\mathbb{H}}(f)|\widetilde{E}_{\ast}(0)-E_{\ast}(0)|\\
&\leq \sigma r \mathrm{Lip}_{\mathbb{H}}(f).
\end{align*}
Hence we obtain,
\begin{equation}\label{yetanother}|f(x_{\ast} + \zeta E_{\ast}(x_{\ast}))-f(x_{\ast})-r\langle u,E_{\ast}(0)\rangle E_{\ast}f(x_{\ast})| \leq 2\sigma r\mathrm{Lip}_{\mathbb{H}}(f).\end{equation}

Notice $|\zeta|\leq r=\Delta s\leq s/2$, so the definition of the $h$ gives $h(\zeta)=x_{\ast}+\zeta E_{\ast}(x_{\ast})$. The definition of $g$ gives $g(\zeta)=\widetilde{x}_{\ast}\delta_{r}(u)$. Using also \eqref{badpoint} and \eqref{yetanother}, we can estimate as follows:
\begin{align}\label{differenceofcomposition}
|\varphi(\zeta)-\psi(\zeta)|&= |f(g(\zeta)) - f(h(\zeta))| \nonumber \\
&= |f(\widetilde{x}_{\ast}\delta_{r}(u)) - f(x_{\ast}+\zeta E_{\ast}(x_{\ast}))| \nonumber\\
& \geq |f(x_{\ast}\delta_{r}(u)) - f(x_{\ast} + \zeta E_{\ast}(x_{\ast}))| - |f(\widetilde{x}_{\ast}\delta_{r}(u)) - f(x_{\ast}\delta_{r}(u))|\nonumber \\
& \geq |f(x_{\ast}\delta_{r}(u))-f(x_{\ast})-rE_{\ast}f(x_{\ast})\langle u, E_{\ast}(0) \rangle | \nonumber \\
& \quad -|f(x_{\ast} + \zeta E_{\ast}(x_{\ast})) - f(x_{\ast}) - r E_{\ast}f(x_{\ast})\langle u, E_{\ast}(0) \rangle| \nonumber \\
& \quad - \sigma r\mathrm{Lip}_{\mathbb{H}}(f) \nonumber\\
&\geq \varepsilon r - 2\sigma r\mathrm{Lip}_{\mathbb{H}}(f) -  \sigma r\mathrm{Lip}_{\mathbb{H}}(f)\nonumber \\
&= \varepsilon r - 3\sigma r\mathrm{Lip}_{\mathbb{H}}(f) \nonumber \\
&\geq 3\varepsilon r/4.
\end{align}
In particular, $\varphi(\zeta)\neq \psi(\zeta)$.

We next check that $\psi'(0)$ exists and
\begin{equation}\label{psiprime}
|\psi(t)-\psi(0)-t\psi'(0)| \leq \sigma L|t|
\end{equation}
whenever $|t|\leq \rho$. Notice $\psi'(0)$ exists and equals $E_{\ast}f(x_{\ast})$, since $E_{\ast}f(x_{\ast})$ exists and $\psi(t)=f(x_{\ast}+tE_{\ast}(x_{\ast}))$ for $|t|\leq s/2$. Recall $|(p\circ h)'-p(E_{\ast})|\leq A_{1}$ and also the definition $A_{1}=(\eta\Delta/C_{\mathrm{a}})^{2}$ from \eqref{A1}. Since $h(0)=x_{\ast}$, Lemma \ref{closedirectioncloseposition} implies that
\[ d(x_{\ast}+tE_{\ast}(x_{\ast}),h(t))\leq C_{\mathrm{a}}\sqrt{A_{1}}|t|\leq \eta\Delta |t|.\]
Hence, using also \eqref{directionaldifferentiability} and $L=(2+\eta\Delta)\mathrm{Lip}_{\mathbb{H}}(f)$,
\begin{align*}
|\psi(t)-\psi(0)-t\psi'(0)| &\leq |f(x_{\ast} + tE_{\ast}(x_{\ast})) - f(x_{\ast})-tE_{\ast}f(x_{\ast})|\\
& \qquad + |f(x_{\ast} + tE_{\ast}(x_{\ast})) - f(h(t))|\\
&\leq \sigma \mathrm{Lip}_{\mathbb{H}}(f)|t| + \mathrm{Lip}_{\mathbb{H}}(f)\eta\Delta|t|\\
&\leq \sigma L |t|.
\end{align*}

Recall that $\mathrm{Lip}_{\mathbb{H}}(f)\leq 1/2$, which implies $L\leq 4$. Using also $r< \delta$, $s=r/\Delta$, \eqref{differenceofcomposition} and the definition of $\delta$ in Parameters (7) we deduce:
\begin{align*}
s\sqrt{ sL/(v|\varphi(\zeta)-\psi(\zeta)|)}  &\leq 4s\sqrt{s/(3\varepsilon rv)}\\
&= 4r/\sqrt{3\varepsilon v\Delta^3}\\
&\leq 4\delta/ \sqrt{3\varepsilon v\Delta^{3}}\\
&\leq \rho.
\end{align*}

Finally we use \eqref{differenceofcomposition}, $L\leq 4$ and the definition of $\sigma$ in Parameters (5) to observe:
\begin{align*}
v^3 (|\varphi(\zeta)-\psi(\zeta)|/(sL))^2&\geq v^3(3\varepsilon r / 16s)^2\\
&= 9\varepsilon^2 v^3 \Delta^2 /256\\
& \geq \sigma.
\end{align*}

We may now apply Lemma \ref{preissmeanvalue}. We obtain $\tau \in (-s,s)\setminus \{\zeta \}$ such that $\varphi'(\tau)$ exists,
\begin{equation}\label{bigderivative}
\varphi'(\tau)\geq \psi'(0)+v|\varphi(\zeta)-\psi(\zeta)|/s,
\end{equation}
and
\begin{equation}\label{incrementsbound}
|(\varphi(\tau+t)-\varphi(\tau))-(\psi(t)-\psi(0))|\leq 4(1+20v)\sqrt{(\varphi'(\tau)-\psi'(0))L}|t|
\end{equation}
for every $t\in \mathbb{R}$. Since $g$ is a horizontal curve, we may use Remark \ref{meanvalueremark} to additionally choose $\tau$ such that $g'(\tau)$ exists and is in $\mathrm{Span}\{X_{i}(g(\tau)), Y_{i}(g(\tau))\colon 1\leq i\leq n\}$. 

\medskip

\emph{Conclusion.} Let $x=g(\tau)\in N$ and choose $E\in V$ with $E(g(\tau))=g'(\tau)/|p(g'(\tau))|$, which implies that $\omega(E)=1$. We will transform \eqref{bigderivative} and \eqref{incrementsbound} into
\begin{equation}\label{betterpair1}
Ef(x)\geq E_{\ast}f(x_{\ast}) + \varepsilon v\Delta/2
\end{equation}
and
\begin{equation}\label{betterpair2}
(x,E)\in M.
\end{equation}
We first observe that this suffices to conclude the proof. Recall $d(\widetilde{x}_{\ast}\delta_{r}(u),x_{\ast})\leq 2r$ from \eqref{nowlistingthese}. Using $g(\tau)=x$ and $g(\zeta)=\widetilde{x}_{\ast}\delta_{r}(u)$ gives:
\begin{align*}
d(x,x_{\ast}) &\leq d(g(\tau),g(\zeta))+d(\widetilde{x}_{\ast}\delta_{r}(u),x_{\ast})\\
&\leq \mathrm{Lip}_{\mathbb{H}}(g)|\tau - \zeta| +2r\\
&\leq 4(s+r)\\
&= 4r(1+1/\Delta)\\
&\leq 4\delta(1+1/\Delta).
\end{align*}
Since $x\in N$, combining this with \eqref{betterpair1} and \eqref{betterpair2} contradicts the choice of $\delta$ in Parameters (7). This contradiction forces us to conclude that \eqref{badpoint} is false, finishing the proof.

\medskip

\emph{Proof of \eqref{betterpair1}.} Using \eqref{differenceofcomposition} and \eqref{bigderivative} we see:
\begin{equation}\label{stanco}
\varphi'(\tau)-\psi'(0)\geq 3\varepsilon vr/4s=3\varepsilon v\Delta/4.
\end{equation}
Notice that $\varphi'(\tau)=Ef(x)|p(g'(\tau))|$ using Definition \ref{defdirectionalderivative} and our choice of $E$. Since $\omega(E)=1$ implies $|Ef(x)|\leq \mathrm{Lip}_{\mathbb{H}}(f)$, we deduce $|\varphi'(\tau)|/|p(g'(\tau))|\leq \mathrm{Lip}_{\mathbb{H}}(f)$. Also $|p(g'(\tau))| \leq \mathrm{Lip}_{\mathbb{H}}(g)\leq 1+\eta \Delta$. Using $\psi'(0)=E_{\ast}f(x_{\ast})$ and \eqref{stanco} gives:
\begin{align*}
& Ef(x)-E_{\ast}f(x_{\ast})-(1-v)(\varphi'(\tau)-\psi'(0))\\
&\qquad = v(\varphi'(\tau)-\psi'(0)) + (1-|p(g'(\tau))|)\varphi'(\tau)/|p(g'(\tau))|\\
&\qquad \geq 3\varepsilon v^2\Delta/4 - \eta\Delta \mathrm{Lip}_{\mathbb{H}}(f)\\
&\qquad \geq 0,
\end{align*}
where in the last inequality we used $\mathrm{Lip}_{\mathbb{H}}(f)\leq 1/2$ and $\eta\leq 3\varepsilon v^2 /2$ from Parameters (2). From this we use $0<v<1/32$ and \eqref{stanco} again to deduce:
\begin{equation}\label{noimagination}
Ef(x)-E_{\ast}f(x_{\ast})\geq (1-v)(\varphi'(\tau)-\psi'(0))\geq \varepsilon v\Delta /2
\end{equation}
which proves \eqref{betterpair1}.

\medskip

\emph{Proof of \eqref{betterpair2}.} Recall that $|(p\circ g)'(t)-p(\widetilde{E}_{\ast})| \leq C_{\mathrm{m}}\Delta$ for all but finitely many $t$, from our construction of $g$. Using \eqref{listvector1}, this implies $|(p\circ g)'(t)- p(E_{\ast})|\leq 2C_{\mathrm{m}}\Delta$ for all but finitely many $t$. Since $x=g(\tau)$, we can apply Lemma \ref{closedirectioncloseposition} to obtain
\[d(g(\tau+t),x+tE_{\ast}(x))\leq C_{\mathrm{a}}\sqrt{2C_{\mathrm{m}}\Delta}|t|\]
for $t\in \mathbb{R}$. By \eqref{noimagination} we have $\Delta \leq 2(Ef(x)-E_{\ast}f(x_{\ast}))/(\varepsilon v)$. Using also the definition of $\Delta$ from Parameters (4), we deduce that for $t\in \mathbb{R}$:
\begin{align}\label{add1}
&|(f(x+tE_{\ast}(x))-f(x))-(f(g(\tau+t))-f(g(\tau)))|\nonumber \\
&\qquad = |f(x+tE_{\ast}(x))-f(g(\tau+t))|\nonumber \\
&\qquad \leq \mathrm{Lip}_{\mathbb{H}}(f)  d(g(\tau+t), x+tE_{\ast}(x))\nonumber \\
&\qquad \leq C_{\mathrm{a}}\sqrt{2C_{\mathrm{m}}\Delta} \mathrm{Lip}_{\mathbb{H}}(f)|t|\nonumber \\
&\qquad \leq C_{\mathrm{a}}\sqrt{2C_{\mathrm{m}}}\mathrm{Lip}_{\mathbb{H}}(f)|t|\Delta^{\frac{1}{4}}\Big( \frac{2(Ef(x)-E_{\ast}f(x_{\ast}))}{\varepsilon v} \Big)^{\frac{1}{4}}\nonumber \\
&\qquad \leq v|t|\big((Ef(x)-E_{\ast}f(x_{\ast}))\mathrm{Lip}_{\mathbb{H}}(f) \big)^{\frac{1}{4}} \Big(\frac{8C_{\mathrm{m}}^{2}C_{\mathrm{a}}^{4}\Delta\mathrm{Lip}_{\mathbb{H}}(f)^{3}}{\varepsilon v^{5}} \Big)^{\frac{1}{4}} \nonumber \\
&\qquad \leq v|t|\big((Ef(x)-E_{\ast}f(x_{\ast}))\mathrm{Lip}_{\mathbb{H}}(f) \big)^{\frac{1}{4}}.
\end{align}
Combining \eqref{incrementsbound}, \eqref{noimagination} and $L=(2+\eta \Delta)\mathrm{Lip}_{\mathbb{H}}(f)\leq (2+v)\mathrm{Lip}_{\mathbb{H}}(f)$ gives:
\begin{align}\label{add2}
&|(\varphi(\tau+t)-\varphi(\tau))-(\psi(t)-\psi(0))| \nonumber \\
&\qquad \leq 4(1+20v)|t| \Big( \frac{(2+v)\mathrm{Lip}_{\mathbb{H}}(f)(Ef(x)-E_{\ast}f(x_{\ast}))}{1-v} \Big)^{\frac{1}{2}}
\end{align}
for $t\in \mathbb{R}$. Using $\mathrm{Lip}_{\mathbb{H}}(f)\leq 1/2$ gives the simple bound:
\[((Ef(x)-E_{\ast}f(x_{\ast}))\mathrm{Lip}_{\mathbb{H}}(f))^{\frac{1}{2}} \leq ((Ef(x)-E_{\ast}f(x_{\ast}))\mathrm{Lip}_{\mathbb{H}}(f))^{\frac{1}{4}}\]
since both sides are less than $1$. Hence adding \eqref{add1} and \eqref{add2} and using the definition $\varphi=f\circ g$ gives:
\begin{align}\label{add3}
& |f(x+tE_{\ast}(x)-f(x))-(\psi(t)-\psi(0))|\nonumber \\
&\qquad \leq \Big( 4(1+20v) \Big( \frac{2+v}{1-v} \Big)^{\frac{1}{2}}+v\Big) |t|( (Ef(x)-E_{\ast}f(x_{\ast}))\mathrm{Lip}_{\mathbb{H}}(f))^{\frac{1}{4}}
\end{align}
for $t\in \mathbb{R}$. 

Recall $\psi = f\circ h$ and $h(0)=x_{\ast}$. Using the inequality $|(p\circ h)'-p(E_{\ast})|\leq A_{2}$, Lemma \ref{closedirectioncloseposition}, our definition of $A_{2}$ in \eqref{A2} and \eqref{betterpair1}, we can estimate as follows:
\begin{align}\label{add4}
&|(\psi(t)-\psi(0))-(f(x_{\ast}+tE_{\ast}(x_{\ast}))-f(x_{\ast}))| \nonumber \\
&\qquad =|f(h(t))-f(x_{\ast}+tE_{\ast}(x_{\ast}))| \nonumber \\
&\qquad \leq \mathrm{Lip}_{\mathbb{H}}(f)d(h(t), x_{\ast}+tE_{\ast}(x_{\ast})) \nonumber \\
&\qquad \leq \mathrm{Lip}_{\mathbb{H}}(f)C_{\mathrm{a}}\sqrt{A_{2}}|t| \nonumber \\
&\qquad =  \Big(6- \Big( 4(1+20v) \Big( \frac{2+v}{1-v} \Big)^{\frac{1}{2}}+v\Big)\Big)|t|( \varepsilon v \Delta/2)\mathrm{Lip}_{\mathbb{H}}(f))^{\frac{1}{4}}\\
&\qquad \leq  \Big(6- \Big( 4(1+20v) \Big( \frac{2+v}{1-v} \Big)^{\frac{1}{2}}+v\Big)\Big)|t|((Ef(x)-E_{\ast}f(x_{\ast}))\mathrm{Lip}_{\mathbb{H}}(f))^{\frac{1}{4}}
\end{align}
for all $t\in \mathbb{R}$. Adding \eqref{add3} and \eqref{add4} gives:
\begin{align*}
& |(f(x+tE_{\ast}(x))-f(x)) - (f(x_{\ast}+tE_{\ast}(x_{\ast}))-f(x_{\ast}))| \\
& \qquad \leq 6|t| \big(  (Ef(x)-E_{\ast}f(x_{\ast}))\mathrm{Lip}_{\mathbb{H}}(f) \big)^{\frac{1}{4}}
\end{align*}
for $t\in \mathbb{R}$. This implies \eqref{betterpair2}, hence proving the theorem.
\end{proof}

\section{Construction of an almost maximal directional derivative}\label{construction}
The main result of this section is Proposition \ref{DoreMaleva}, which is an adaptation of \cite[Theorem 3.1]{DM11} to $\mathbb{H}^n$. It shows that given a Lipschitz function $f_{0}\colon \mathbb{H}^{n} \to \mathbb{R}$, there is a Lipschitz function $f\colon \mathbb{H}^{n} \to \mathbb{R}$ such that $f-f_{0}$ is $\mathbb{H}$-linear and $f$ has an almost locally maximal horizontal directional derivative in the sense of Theorem \ref{almostmaximalityimpliesdifferentiability}. We will conclude that any Lipschitz function $f_{0}$ is Pansu differentiable at a point of $N$, proving Theorem \ref{maintheorem}. Our argument follows very closely that of \cite{DM11}, modified to use horizontal directions, $\mathbb{H}$-linear maps and H\"{o}lder equivalence of the Carnot-Carath\'{e}odory and Euclidean distance.

Recall the measure zero $G_{\delta}$ set $N$ and the notation $D^f$ fixed in Notation \ref{D^f}. In particular, the statement $(x,E)\in D^f$ implies that $x\in N$. Note that if $f-f_{0}$ is $\mathbb{H}$-linear then $D^{f}=D^{f_{0}}$ and also the functions $f$ and $f_{0}$ have the same points of Pansu differentiability.

\begin{proposition}\label{DoreMaleva}
Suppose $f_0:\mathbb{H}^n\to \mathbb{R}$ is a Lipschitz function, $(x_0,E_0)\in D^{f_0}$ and $\delta_0, \mu, K>0$. Then there is a Lipschitz function $f:\mathbb{H}^n\to \mathbb{R}$ such that $f-f_0$ is $\mathbb{H}$-linear with $\mathrm{Lip}_{\mathbb{H}}(f-f_{0})\leq \mu$, and a pair $(x_{\ast},E_{\ast})\in D^{f}$ with $d(x_{\ast},x_0)\leq \delta_0$ such that $E_{\ast}f(x_{\ast})>0$ is almost locally maximal in the following sense.

For any $\varepsilon>0$ there is $\delta_{\varepsilon}>0$ such that whenever $(x,E)\in D^{f}$ satisfies both:
\begin{enumerate}
\item $d(x,x_{\ast})\leq \delta_{\varepsilon}$, $Ef(x)\geq E_{\ast}f(x_{\ast})$,
\item for any $t\in (-1,1)$:
\begin{align*}
&|(f(x+tE_{\ast}(x))-f(x))-(f(x_{\ast}+tE_{\ast}(x_{\ast}))-f(x_{\ast}))|\\
& \qquad \leq K|t| ( Ef(x)-E_{\ast}f(x_{\ast}))^{\frac{1}{4}},
\end{align*}
\end{enumerate}
then:
\[Ef(x)<E_{\ast}f(x_{\ast})+\varepsilon.\]
\end{proposition}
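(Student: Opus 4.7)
The proof adapts the inductive scheme of Dore--Maleva \cite{DM11} to $\mathbb{H}^n$. The plan is to construct sequences $(x_n, E_n, f_n)$ with $(x_n,E_n)\in D^{f_n}$ and $f_n - f_0$ an $\mathbb{H}$-linear function of controlled Lipschitz constant, then take limits to produce $(x_*, E_*, f)$. After a small initial $\mathbb{H}$-linear perturbation of $f_0$ and possibly replacing $E_0$ by $-E_0$, I may assume $E_0 f_0(x_0) > 0$. I then fix summable positive null sequences $r_n, \eta_n, c_n, \varepsilon_n$ with $\sum r_n < \delta_0$ and $\sum c_n \leq \mu$; these will be shrunk further as the construction dictates. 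Throughout the construction I use that $D^{f_n} = D^{f_0}$ since $f_n - f_0$ is $\mathbb{H}$-linear.

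At stage $n$, given $(x_n, E_n, f_n)$, define the admissible set
\[
A_n := \{ (x, E) \in D^{f_n} : d(x, x_n) \leq r_n,\ \omega(E - E_n) \leq \eta_n,\ Ef_n(x) \geq E_n f_n(x_n),\ (\mathrm{DQ})_n\},
\]
where $(\mathrm{DQ})_n$ is the difference-quotient bound (2) of the proposition written with $(x_n, E_n, f_n)$ in place of $(x_*, E_*, f)$. The set $A_n$ is nonempty (it contains $(x_n, E_n)$), and $s_n := \sup\{Ef_n(x) : (x, E) \in A_n\}$ is finite by Lemma \ref{lipismaximal}. I then pick $(x_{n+1}, E_{n+1}) \in A_n$ with $E_{n+1} f_n(x_{n+1}) > s_n - \varepsilon_n$, and set $f_{n+1} := f_n + c_n L_{E_{n+1}}$ where $L_E(y) := \langle y, E(0) \rangle$. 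By Lemma \ref{lemmascalarlip} the function $L_{E_{n+1}}$ is $\mathbb{H}$-linear with $\mathrm{Lip}_{\mathbb{H}}(L_{E_{n+1}}) = 1$ and its directional derivative in direction $E$ equals $\langle p(E), p(E_{n+1}) \rangle$, maximized (equal to $1$) when $E = E_{n+1}$; this is precisely what enforces monotonicity and positivity of $E_n f_n(x_n)$.

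Convergence follows from the telescope estimates $d(x_n, x_*) \leq \sum_{k \geq n} r_k$, $\omega(E_n - E_*) \leq \sum_{k \geq n} \eta_k$ and $\mathrm{Lip}_{\mathbb{H}}(f - f_n) \leq \sum_{k \geq n} c_k$, yielding $x_n \to x_*$, $E_n \to E_*$ with $\omega(E_*) = 1$, and $f_n \to f$ with $f - f_0$ an $\mathbb{H}$-linear function of Lipschitz constant at most $\mu$. To force $x_* \in N$, I use that $N$ is $G_\delta$: write $N = \bigcap_k U_k$ with each $U_k$ open (CC and Euclidean topologies coincide by Proposition \ref{euclideanheisenberg}), and at stage $n$ additionally impose the constraint $B_{\mathbb{H}}(x_{n+1}, 2 \sum_{k > n} r_k) \subset U_n$, which is achievable by further shrinking the tail $(r_k)_{k > n}$ since $x_{n+1} \in N \subset U_n$ and $U_n$ is open; this guarantees $x_* \in \bigcap_n U_n = N$.

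For the almost maximality property, given $\varepsilon > 0$, I choose $n_0$ so large that $\varepsilon_{n_0}$ and the tails $\sum_{k \geq n_0} c_k$, $\sum_{k \geq n_0} \eta_k$ are small compared with $\varepsilon$, and set $\delta_\varepsilon$ equal to a small multiple of $r_{n_0}$. If $(x, E) \in D^f$ satisfies (1) and (2), then for some $n \geq n_0$ the pair $(x, E)$ lies in $A_n$ (with respect to $f_n, x_n, E_n$) after absorbing the tail discrepancies $f - f_n$, $x_* - x_n$ and $E_* - E_n$ into the constants, using Lemma \ref{lemmascalarlip}(2) to evaluate the $\mathbb{H}$-linear tail's contribution to directional derivatives and Proposition \ref{euclideanheisenberg} to bridge CC/Euclidean comparisons. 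Hence $Ef_n(x) \leq s_n \leq E_{n+1} f_n(x_{n+1}) + \varepsilon_n$, and passing to the limit in $n$ yields $Ef(x) \leq E_* f(x_*) + \varepsilon$. The main obstacle I expect is showing that $E_* f(x_*)$ genuinely exists as a directional derivative and not merely as a $\limsup$; this is pinned down by the $(\mathrm{DQ})_n$-bound built into each $A_n$, which controls the one-dimensional deviations of $f_n$ along $E_n$ at $x_n$ uniformly in $n$ via the shrinking gap $s_n - E_n f_n(x_n) \leq \varepsilon_n$, and passing to the limit using Lemma \ref{lemmascalarlip} together with Proposition \ref{euclideanheisenberg} gives both existence and the required asymptotic control.
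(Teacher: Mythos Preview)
Your outline has the right overall architecture, but two concrete gaps would prevent the argument from closing.

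\textbf{The direction constraint in $A_n$.} You include $\omega(E-E_n)\le \eta_n$ in the definition of the admissible set $A_n$. This is fine for constructing the sequence, but it kills the final almost-maximality step: given a competitor $(x,E)\in D^f$ satisfying (1) and (2), nothing in those hypotheses forces $E$ to be close to $E_n$ (or to $E_*$). Condition (2) involves $E$ only through the scalar $Ef(x)$ on the right-hand side, so $E$ can point in any direction. Hence you cannot place $(x,E)$ inside $A_n$ and the inequality $Ef_n(x)\le s_n$ is unavailable. In the paper the admissible sets $D_m$ carry \emph{no} direction constraint; the convergence $E_m\to E_*$ is obtained \emph{a posteriori} from the linear perturbation (if $(x,E)\in D_{m+1}$ then the two inequalities $E_mf_{m+1}(x_m)\le Ef_{m+1}(x)$ and $Ef_m(x)\le E_mf_m(x_m)+\lambda_m$ combine to give $\langle E(0),E_m(0)\rangle\ge 1-\lambda_m/t_m$). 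The competitor in the final step is shown to lie in $D_m$, where no direction restriction has to be verified.

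\textbf{No slack in $(\mathrm{DQ})_n$ and the base-point transfer.} Your $(\mathrm{DQ})_n$ has the exact bound $K|t|(Ef_n(x)-E_nf_n(x_n))^{1/4}$. To pass from the competitor's bound (2), written relative to $(x_*,E_*,f)$, to $(\mathrm{DQ})_n$, written relative to $(x_n,E_n,f_n)$, you must absorb three errors: replacing $f$ by $f_n$ (harmless by $\mathbb{H}$-linearity), replacing the direction $E_*$ by $E_n$ (costs $d(E_*(0),E_n(0))|t|$), and replacing the base point $x_*$ by $x_n$. This last error is the serious one in $\mathbb{H}^n$: the quantity $d\big(x+tE_n(x),\,x_n+tE_n(x_n)\big)$ is \emph{not} $O(|t|)$ for small $t$; via Proposition \ref{euclideanheisenberg} you only get a bound of order $d(x,x_n)^{1/2}$, independent of $t$. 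The paper handles this by building a decreasing slack $\sigma_m$ into the relation $\le_{(f_m,\sigma_{m-1}-\varepsilon)}$ and by splitting into two regimes: for $|t|$ above a threshold (depending on $\delta_m$) the H\"older bound becomes $O(|t|)$ after division; for $|t|$ below the threshold one uses instead the differentiability information recorded in Algorithm \ref{alg}(8) (and its analogue \eqref{estimated2} in Lemma \ref{almostlocmax}). Without slack and without this two-regime analysis, ``absorbing the tail discrepancies into the constants'' does not go through.

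Both issues are repaired by following the paper's setup: drop the direction constraint from the admissible set, introduce a slack sequence $\sigma_m\downarrow 0$ in the difference-quotient relation, and record at each stage a $\delta_m$ small enough that the small-$|t|$ regime can be handled via the directional derivative at $(x_m,E_m)$.
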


We use the remainder of this section to prove Proposition \ref{DoreMaleva}. Fix parameters $f_{0}, x_0, E_0, \delta_0, \mu, K$ as given in the statement of the theorem.

\begin{assumptions}
Without loss of generality, we make the following assumptions:
\begin{itemize}
\item $K\geq 8$, since increasing $K$ makes the statement of Proposition \ref{DoreMaleva} stronger,
\item $\mathrm{Lip}_{\mathbb{H}}(f_0)\leq 1/2$, after multiplying $f_0$ by a positive constant and possibly increasing $K$,
\item $E_0f(x_0)\geq 0$, by replacing $E_0$ by $-E_0$ if necessary.
\end{itemize}
\end{assumptions}

We prove Proposition \ref{DoreMaleva} by using Algorithm \ref{alg} below to construct pairs $(x_{n}, E_{n})$ and Lipschitz functions $f_{n}$, satisfying various constraints, such that $E_{n}f(x_{n})$ is closer and closer to maximal. We then show that the limits $(x_{\ast},E_{\ast})$ and $f$ have the properties stated in Proposition \ref{DoreMaleva}. Algorithm \ref{alg} is an adaptation of \cite[Algorithm 3.2]{DM11}. We use the following notation to repeatedly find better pairs.

\begin{notation}\label{comparison}
Suppose $h:\mathbb{H}^n\to\mathbb{R}$ is Lipschitz, the pairs $(x,E)$ and $(x',E')$ belong to $D^h$, and $\sigma \geq 0$. We write:
\[(x,E)\leq_{(h,\sigma)} (x',E')\]
if $E h(x)\leq E' h(x')$ and for all $t\in (-1,1)$:
\begin{align*}
&|(h(x'+tE(x'))-h(x'))-(h(x+tE(x))-h(x))|\\
&\qquad \leq K (\sigma+ (E'f(x')-Ef(x))^{\frac{1}{4}})|t|.
\end{align*}
\end{notation}

In the language of Notation \ref{comparison}, Proposition \ref{DoreMaleva}(2) means $(x_{\ast},E_{\ast})\leq_{(f,0)} (x,E)$. 

In Algorithm \ref{alg} we introduce parameters satisfying various estimates, but the most important factor is the order in which the parameters are chosen. We use the following constants:
\begin{itemize}
\item $C_{\mathrm{a}}=C_{\mathrm{angle}}(1)\geq 1$ chosen by applying Lemma \ref{closedirectioncloseposition} with $S=1$,
\item $C_{\mathrm{H}}=C_{\mathrm{H\ddot{o}lder}}\geq 1$ denotes the constant in Proposition \ref{euclideanheisenberg} for the compact set $\overline{B_{\mathbb{H}}(x_{0},2+\delta_{0})} \subset \mathbb{H}^{n}$,
\item $C_{V}\geq 1$ such that $\mathrm{Lip}_{\mathbb{E}}(E)\leq C_{V}$ whenever $E\in V$ and $\omega(E)=1$. This is possible since $V=\mathrm{Span}\{X_{i}, Y_{i}: 1\leq i\leq n\}$ and $\{X_{i}, Y_{i}: 1\leq i\leq n\}$ are Lipschitz functions $\mathbb{R}^{2n+1} \to \mathbb{R}^{2n+1}$ with respect to the Euclidean distance.
\end{itemize}

Since $N$ is $G_{\delta}$ we can fix open sets $U_k\subset \mathbb{H}^n$ such that $N=\cap_{k=0}^{\infty} U_k$. We may assume that $U_{0}=\mathbb{H}^{n}$. 

\begin{algorithm}\label{alg}
Recall $f_0, x_0, E_0$ and $\delta_0$ from the hypotheses of Proposition \ref{DoreMaleva}. Let $\sigma_0:=2$ and $t_0:=\min \{1/4,\, \mu/2\}$.

Suppose that $m\geq 1$ and the parameters $f_{m-1}, x_{m-1}, E_{m-1}, \sigma_{m-1}, t_{m-1}, \delta_{m-1}$ have already been defined. Then we can choose:
\begin{enumerate}
\item $f_m(x):=f_{m-1}(x)+t_{m-1} \langle x, E_{m-1}(0) \rangle$,
\item $\sigma_m\in (0, \sigma_{m-1}/4)$,
\item $t_m\in (0, \min\{t_{m-1}/2,\, \sigma_{m-1}/(4m)\})$,
\item $\lambda_m\in (0, t_m\sigma_m^4/(2C_{\mathrm{a}}^4))$,
\item $D_m$ to be the set of pairs $(x,E)\in D^{f_m}=D^{f_0}$ such that $d(x,x_{m-1})<\delta_{m-1}$ and
\[(x_{m-1}, E_{m-1})\leq_{(f_m,\sigma_{m-1}-\varepsilon)} (x,E)\]
for some $\varepsilon\in (0,\sigma_{m-1})$,
\item $(x_m,E_m)\in D_m$ such that $Ef_m(x)\leq E_mf_m(x_m)+\lambda_m$ for every pair $(x,E)\in D_m$,
\item $\varepsilon_m\in (0,\sigma_{m-1})$ such that $(x_{m-1}, E_{m-1})\leq_{(f_m,\sigma_{m-1}-\varepsilon_m)} (x_m, E_m)$,
\item $\delta_m\in (0, (\delta_{m-1}-d(x_m,x_{m-1}))/2)$ such that $\overline{B_{\mathbb{H}}(x_m,\delta_m)}\subset U_m$ and for all $|t|<C_{\mathrm{H}}^{2}(1+C_{V})^{\frac{1}{2}}\delta_m^{\frac{1}{2}}/\varepsilon_m$:
\begin{align*}
&|(f_m(x_m + tE_{m}(x_{m}))-f_m(x_m))-(f_m(x_{m-1}+tE_{m-1}(x_{m-1}))-f_m(x_{m-1}))| \\
&\qquad \leq( E_mf_m(x_m)-E_{m-1}f_m(x_{m-1})+\sigma_{m-1}) |t|.
\end{align*}
\end{enumerate}
\end{algorithm}

\begin{proof}
Clearly one can make choices satisfying (1)--(5).

For (6) first notice that $(x_{m-1},E_{m-1})\in D_m$  and hence $D_{m} \neq \emptyset$. By Lemma \ref{lemmascalarlip}, the functions $f_{m} \colon \mathbb{H}^{n} \to \mathbb{R}$ are Lipschitz and 
\begin{equation}\label{deffm}
f_m(x)=f_0(x)+\Big\langle x, \sum_{k=0}^{m-1} t_kE_k(0)\Big\rangle.
\end{equation} 
Using $\mathrm{Lip}_{\mathbb{H}}(f_0)\leq 1/2$, $t_{k+1}\leq t_k/2$, $t_0\leq 1/4$ and Lemma \ref{lemmascalarlip} gives $\mathrm{Lip}_{\mathbb{H}}(f_m)\leq 1$. Lemma \ref{lipismaximal} implies $|Ef_{m}(x)|\leq \omega(E)\mathrm{Lip}_{\mathbb{H}}(f_{m})$. Hence $\sup_{(x,E)\in D_m} Ef_m(x)\leq 1$, so we can choose $(x_m,E_m)\in D_m$ as in (6). 

The definition of $D_m$ in (5) implies that one can choose $\varepsilon_m$ as in (7). 

Notice that (6) and the definition of $D_m$ in (5) imply that $x_{m}\in N\subset U_{m}$, $d(x_m,x_{m-1})<\delta_{m-1}$ and $E_mf_m(x_m)\geq E_{m-1}f_m(x_{m-1})$. Therefore we can use existence of the directional derivatives of $f_{m}$ to choose $\delta_m$ as in (8).
\end{proof}

We record for later use that $\mathrm{Lip}_{\mathbb{H}}(f_m)\leq 1$ for all $m\geq 1$. We next show that several parameters in Algorithm \ref{alg} converge to $0$ and the balls $B_{\mathbb{H}}(x_m,\delta_m)$ form a decreasing sequence.

\begin{lemma} \label{inclusionballs}
The sequences $\sigma_m, t_m, \lambda_m, \delta_m, \varepsilon_m$ converge to $0$. For every $m\geq 1$ the following inclusion holds:
\[\overline{B_{\mathbb{H}}(x_m,\delta_m)}\subset B_{\mathbb{H}}(x_{m-1},\delta_{m-1}).\]
\end{lemma}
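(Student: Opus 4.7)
The proof should be routine consequence of the explicit inequalities built into Algorithm \ref{alg}. My plan is to handle the five convergences one at a time, then derive the inclusion of balls from step (8).

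For the convergences, I would first observe that steps (2) and (3) give geometric contractions: $\sigma_m < \sigma_{m-1}/4$ and $t_m < t_{m-1}/2$, so by induction $\sigma_m < 2/4^m$ and $t_m < t_0/2^m$, both tending to $0$. Then $\lambda_m \to 0$ follows immediately from step (4) since $\lambda_m < t_m \sigma_m^4/(2C_{\mathrm{a}}^4)$ and both factors vanish. Next, $\varepsilon_m \to 0$ is forced by step (7), which demands $\varepsilon_m \in (0, \sigma_{m-1})$, so $\varepsilon_m < \sigma_{m-1} \to 0$. Finally, $\delta_m \to 0$ because step (8) gives $\delta_m < (\delta_{m-1} - d(x_m, x_{m-1}))/2 \le \delta_{m-1}/2$, so $\delta_m < \delta_0/2^m$.

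For the inclusion of balls, pick any $y$ with $d(y, x_m) \leq \delta_m$. By the triangle inequality,
\[
d(y, x_{m-1}) \leq d(y, x_m) + d(x_m, x_{m-1}) \leq \delta_m + d(x_m, x_{m-1}).
\]
From step (8), $2\delta_m < \delta_{m-1} - d(x_m, x_{m-1})$, equivalently
\[
\delta_m + d(x_m, x_{m-1}) < \tfrac{1}{2}\bigl(\delta_{m-1} + d(x_m, x_{m-1})\bigr).
\]
Since $x_m \in D_m$ forces $d(x_m, x_{m-1}) < \delta_{m-1}$ (by the definition of $D_m$ in step (5)), the right-hand side is strictly less than $\delta_{m-1}$. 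Hence $d(y, x_{m-1}) < \delta_{m-1}$, giving the desired inclusion $\overline{B_{\mathbb{H}}(x_m, \delta_m)} \subset B_{\mathbb{H}}(x_{m-1}, \delta_{m-1})$.

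There is no genuine obstacle here; the statement is designed to be a bookkeeping consequence of how the parameters were chosen, with the only mild subtlety being that the strict inequality $d(x_m, x_{m-1}) < \delta_{m-1}$ comes from membership in $D_m$ (step (5)) rather than from step (8) itself.
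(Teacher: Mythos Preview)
Your proof is correct and matches the paper's own argument essentially line for line: the paper also derives $\sigma_m \leq 2/4^m$ from step (2), then cites steps (3), (4), (7), (8) for the remaining convergences, and for the ball inclusion uses the triangle inequality together with $\delta_m < (\delta_{m-1} - d(x_m,x_{m-1}))/2$ and $d(x_m,x_{m-1}) < \delta_{m-1}$ (the latter from $(x_m,E_m)\in D_m$, i.e.\ steps (5)--(6)). Your observation that the strict inequality on $d(x_m,x_{m-1})$ comes from membership in $D_m$ rather than from step (8) alone is exactly the point the paper's citation of ``(6,8)'' encodes.
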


\begin{proof}
Algorithm \ref{alg}(2) and $\sigma_0= 2$ gives $0 < \sigma_m \leq 2/4^m$ so $\sigma_m\to 0$. Combining this with Algorithm \ref{alg}(3,4,7,8) shows the other sequences converge to $0$. Let $x\in \overline{B_{\mathbb{H}}(x_m,\delta_m)}$. Then Algorithm \ref{alg}(6,8) gives:
\begin{align*}
d(x, x_{m-1})&\leq \delta_m+ d(x_m, x_{m-1})\\
&< \delta_{m-1}/2 + d(x_m,x_{m-1})/2\\
&<\delta_{m-1}.
\end{align*}
This shows the desired inclusion.
\end{proof}

Define $\varepsilon'_m>0$ by:
\begin{equation}\label{defepsprimo}
\varepsilon'_m:=\min\{\varepsilon_m/2,\, \sigma_{m-1}/2\}.
\end{equation}

We next show that the sets $D_{m}$ of special pairs form a decreasing sequence. This is an adaptation of \cite[Lemma 3.3]{DM11}.

\begin{lemma}\label{lemmachiave}
The following statements hold:
\begin{enumerate}
\item If $m\geq 1$ and $(x,E)\in D_{m+1}$ then:
\[(x_{m-1}, E_{m-1})\leq_{(f_m, \sigma_m-\varepsilon'_m)} (x,E),\]
\item If $m\geq 1$ then $D_{m+1}\subset D_m$,
\item If $m\geq 0$ and $(x,E)\in D_{m+1}$ then $ d(E(0),E_m(0))\leq \sigma_m$.
\end{enumerate}
\end{lemma}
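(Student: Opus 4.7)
I would prove the three assertions by induction on $m$, establishing them in the order (1), (2), (3) at each inductive step. The base case $m=0$ concerns only (3): if $(x,E)\in D_1$ then $\omega(E)=\omega(E_0)=1$, both $E(0)$ and $E_0(0)$ have vanishing final coordinate, so Lemma \ref{horizontaldistances} gives $d(E(0),E_0(0))=|p(E)-p(E_0)|\leq 2=\sigma_0$. For the inductive step I assume (3) at index $m-1$ (and, where they make sense, (1) and (2) at earlier indices), fix $(x,E)\in D_{m+1}$, and extract from Algorithm \ref{alg}(5) a number $\varepsilon\in(0,\sigma_m)$ with $(x_m,E_m)\leq_{(f_{m+1},\sigma_m-\varepsilon)}(x,E)$; Algorithm \ref{alg}(7) furnishes the companion $(x_{m-1},E_{m-1})\leq_{(f_m,\sigma_{m-1}-\varepsilon_m)}(x_m,E_m)$.

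For (1) the plan is to decompose
\[
\Delta(t):=\bigl(f_m(x+tE_{m-1}(x))-f_m(x)\bigr)-\bigl(f_m(x_{m-1}+tE_{m-1}(x_{m-1}))-f_m(x_{m-1})\bigr)
\]
by inserting $\pm(f_m(x+tE_m(x))-f_m(x))$ and $\pm(f_m(x_m+tE_m(x_m))-f_m(x_m))$ to produce three pieces. The first piece compares directions $E_{m-1}$ and $E_m$ at the common basepoint $x$; using $\mathrm{Lip}_{\mathbb{H}}(f_m)\leq 1$ and Lemma \ref{closedirectioncloseposition} for the two horizontal lines emanating from $x$, it is bounded by $C_{\mathrm{a}}\sqrt{|p(E_m)-p(E_{m-1})|}\,|t|$, with the angle $|p(E_m)-p(E_{m-1})|$ controlled by (3) at index $m-1$ applied to $(x_m,E_m)\in D_m$. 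The middle piece is precisely the $f_{m+1}$-comparison from $D_{m+1}$ translated to $f_m$: because $f_{m+1}-f_m=t_m\langle\cdot,E_m(0)\rangle$ and $\langle E_m(y),E_m(0)\rangle=1$ for every $y$, the linear perturbation contributes $t\cdot t_m$ to each bracket and cancels. The third piece, comparing basepoints $x_m$ and $x_{m-1}$ with directions $E_m$ and $E_{m-1}$, is handled by Algorithm \ref{alg}(8) when $|t|$ lies below its stated threshold and by the trivial Lipschitz bound $2|t|$ otherwise, the latter absorbed into the quartic-root term via the lower bound $Ef_m(x)-E_{m-1}f_m(x_{m-1})\geq 0$ obtained by chaining $E_{m-1}f_m(x_{m-1})\leq E_mf_m(x_m)\leq Ef_m(x)$ from Algorithm \ref{alg}(7) together with the derivative inequality from the $D_{m+1}$ comparison (using $\langle p(E),p(E_m)\rangle\leq 1$). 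This chain also supplies the required scalar inequality $E_{m-1}f_m(x_{m-1})\leq Ef_m(x)$.

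Part (2) is then immediate: $(x,E)\in D^{f_m}=D^{f_{m+1}}$ by $\mathbb{H}$-linearity of the difference, Lemma \ref{inclusionballs} yields $d(x,x_{m-1})<\delta_{m-1}$, and (1) provides the comparison required by the definition of $D_m$ with witness $\varepsilon=\sigma_{m-1}-(\sigma_m-\varepsilon'_m)\in(0,\sigma_{m-1})$. For (3), using (2) I apply Algorithm \ref{alg}(6) to obtain $Ef_m(x)\leq E_mf_m(x_m)+\lambda_m$ and combine this with $Ef_{m+1}(x)\geq E_mf_{m+1}(x_m)$; substituting $f_{m+1}=f_m+t_m\langle\cdot,E_m(0)\rangle$ and using the Pythagorean identity $1-\langle p(E),p(E_m)\rangle=\tfrac12|p(E)-p(E_m)|^2$ for unit vectors yields $\tfrac{t_m}{2}|p(E)-p(E_m)|^2\leq\lambda_m$, so Algorithm \ref{alg}(4) forces $|p(E)-p(E_m)|<\sigma_m^2/C_{\mathrm{a}}^2\leq\sigma_m$, and Lemma \ref{horizontaldistances} converts this into $d(E(0),E_m(0))\leq\sigma_m$ since $E(0)-E_m(0)$ has vanishing final coordinate. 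The main obstacle is the quantitative matching in (1): the three decomposed pieces must together not exceed $K(\sigma_m-\varepsilon'_m+(Ef_m(x)-E_{m-1}f_m(x_{m-1}))^{1/4})|t|$, which demands a delicate case split near the threshold $|t|\sim \delta_m^{1/2}/\varepsilon_m$ where Algorithm \ref{alg}(8) ceases to apply and depends sensitively on the parameter choices built into Algorithm \ref{alg}(2)--(4).
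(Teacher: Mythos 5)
Your overall architecture --- the simultaneous induction, the three-term decomposition of part (1) obtained by inserting the $E_m$-direction difference quotients at $x$ and at $x_m$, the exact cancellation of the $\mathbb{H}$-linear perturbation $t_m\langle\cdot,E_m(0)\rangle$, and the derivation of (3) from Algorithm \ref{alg}(5),(6) --- is the paper's. But two of your steps fail as stated. In part (1), the large-$|t|$ regime cannot be handled by bounding the third piece by $2|t|$ and ``absorbing'' it into the quartic-root term: the quantity $Ef_m(x)-E_{m-1}f_m(x_{m-1})$ is only known to be nonnegative and may be arbitrarily small, while the additive slack $K(\sigma_{m-1}-\varepsilon'_m)$ tends to $0$, so a term of size $2|t|$ does not fit for large $m$. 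For $C_{\mathrm{H}}^2(1+C_V)^{1/2}\delta_m^{1/2}/\varepsilon_m\leq|t|<1$ the paper abandons the three-term decomposition entirely and instead compares $x+tE_{m-1}(x)$ with $x_m+tE_{m-1}(x_m)$: the resulting $x_m$-versus-$x_{m-1}$ term is exactly what Algorithm \ref{alg}(7) controls, and the two basepoint-change terms $|f_m(x)-f_m(x_m)|$ and $|f_m(x+tE_{m-1}(x))-f_m(x_m+tE_{m-1}(x_m))|$ are each at most $\varepsilon_m|t|$, the second via the H\"older comparison of Proposition \ref{euclideanheisenberg} (the map $y\mapsto y+tE_{m-1}(y)$ is Lipschitz only for the Euclidean metric) combined with the lower bound on $|t|$. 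This is a missing idea, not a bookkeeping detail; it is the reason the threshold and the constants $C_{\mathrm{H}}, C_V$ appear in Algorithm \ref{alg}(8) at all.

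The second problem is that you identify Carnot--Carath\'eodory distances between horizontal vectors with Euclidean ones. In the base case and in part (3) you assert $d(E(0),E'(0))=|p(E)-p(E')|$; this is false, since $E(0)^{-1}E'(0)$ has a nonzero vertical component in general, and the correct estimate is $d(E(0),E'(0))\leq C_{\mathrm{a}}|E(0)-E'(0)|^{1/2}$ (Lemma \ref{closedirectioncloseposition} applied to $tE(0)$ and $tE'(0)$). That square root is exactly why Algorithm \ref{alg}(4) requires $\lambda_m<t_m\sigma_m^4/(2C_{\mathrm{a}}^4)$. Your intermediate bound $|p(E)-p(E_m)|<\sigma_m^2/C_{\mathrm{a}}^2$ does survive the correct conversion, and in the base case the triangle inequality gives $d(E(0),E_0(0))\leq 2$ directly, so (3) is repairable. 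The mirror error occurs in the first piece of (1): there the two curves are lines through the \emph{same} point, so left-invariance and dilations give the exact identity $d(x+tE_{m-1}(x),x+tE_m(x))=|t|\,d(E_{m-1}(0),E_m(0))\leq\sigma_{m-1}|t|$, and you need this sharper bound; your estimate $C_{\mathrm{a}}\sqrt{\sigma_{m-1}}|t|$ from Lemma \ref{closedirectioncloseposition} is too weak to fit inside the budget $K(\sigma_{m-1}-\varepsilon'_m)|t|$ once $\sigma_{m-1}<C_{\mathrm{a}}^2$. Two smaller points: the small-$|t|$ case still needs the factorization $K(B^{1/4}-A^{1/4})\geq B-A$ (valid because $0\leq A\leq B\leq 2$ and $K\geq 8$) to merge the first-order increment from Algorithm \ref{alg}(8) with the quartic root from the $D_{m+1}$ comparison, which you do not mention; and the comparison actually proved (and needed for part (2) and for Lemma \ref{lemmaquasifinale}(5)) is $\leq_{(f_m,\sigma_{m-1}-\varepsilon'_m)}$, with witness $\varepsilon'_m\in(0,\sigma_{m-1})$ --- the $\sigma_m$ in the displayed statement is a typo, and your proposed witness $\sigma_{m-1}-\sigma_m+\varepsilon'_m$ need not lie in $(0,\sigma_{m-1})$.
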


\begin{proof}
If $m=0$ then (3) holds since:
\[d(E(0),E_0(0))\leq d(E(0),0)+d(0,E_0(0))\leq 2=\sigma_0.\]
It is enough to check that whenever $m\geq 1$ and (3) holds for $m-1$, then (1), (2) and (3) hold for $m$. Fix $m\geq 1$ and assume that (3) holds for $m-1$: 
\[ d(E(0),E_{m-1}(0)) \leq \sigma_{m-1}\quad  \mbox{for all}\quad (x, E)\in D_m.\]

\medskip

\emph{Proof of (1).} Algorithm \ref{alg}(6) states that $(x_m,E_m)\in D_m$. Hence:
\begin{equation}\label{stimavett}
d(E_m(0),E_{m-1}(0))\leq \sigma_{m-1}.
\end{equation}
Let $(x, E)\in D_{m+1}$. In particular, we have $Ef_{m+1}(x)\geq E_{m}f_{m+1}(x_{m})$ by Algorithm \ref{alg}(5). Notice that, since $\omega(E_{m})=\omega(E)=1$, we have $\langle E_m(0),E_m(0)\rangle=1$ and $\langle E(0), E_m(0) \rangle\leq 1$. Let $A:=Ef_m(x)-E_mf_m(x_m)$. Using Lemma \ref{lemmascalarlip} and the inequality $Ef_{m+1}(x)\geq E_{m}f_{m+1}(x_{m})$ gives:
\[A=Ef_{m+1}(x)-E_mf_{m+1}(x_m)-t_m\langle E(0), E_m(0) \rangle+t_m\geq 0.\]
Using Algorithm \ref{alg}(5) again, together with the above inequality, gives:
\[Ef_m(x)\geq E_mf_m(x_m)\geq E_{m-1} f_m(x_{m-1}).\]
In particular, $Ef_{m}(x)\geq E_{m-1}f_{m}(x_{m-1})$ proves the first part of the statement $(x_{m-1}, E_{m-1})\leq_{(f_m, \sigma_m-\varepsilon'_m)} (x,E)$.

Let $B:=E f_m(x)-E_{m-1}f_m(x_{m-1})\geq 0$. Lemma \ref{lipismaximal} and $\mathrm{Lip}_{\mathbb{H}}(f_m)\leq 1$ imply that $0\leq A, \, B\leq 2$. Using these inequalities and $K\geq 8$ gives:
\begin{align}\label{factorize}
K(B^{\frac{1}{4}}-A^{\frac{1}{4}})&\geq (B^{\frac{3}{4}}+B^{\frac{1}{2}}A^{\frac{1}{4}}+B^{\frac{1}{4}}A^{\frac{1}{2}}+A^{\frac{3}{4}})(B^{\frac{1}{4}}-A^{\frac{1}{4}})\nonumber  \\
&=B-A\nonumber \\
&=E_mf_m(x_m)-E_{m-1}f_m(x_{m-1}).
\end{align}
Since $A\geq Ef_{m+1}(x)-E_mf_{m+1}(x_m)$, \eqref{factorize} implies:
\begin{align}\label{estimate3}
&  E_mf_m(x_m)-E_{m-1}f_m(x_{m-1})+K( Ef_{m+1}(x)-E_mf_{m+1}(x_m))^{\frac{1}{4}}\nonumber \\
& \qquad \leq K B^{\frac{1}{4}}.
\end{align}
To prove the second part of $(x_{m-1}, E_{m-1})\leq_{(f_m, \sigma_m-\varepsilon'_m)} (x,E)$ we need to estimate:
\begin{equation}\label{thingtoestimate}
|(f_m(x + tE_{m-1}(x))-f_m(x))-(f_m(x_{m-1}+tE_{m-1}(x_{m-1}))-f_m(x_{m-1}))|.
\end{equation}
We consider two cases, depending on whether $t$ is small or large. 

\medskip

\emph{Suppose $|t|<C_{\mathrm{H}}^{2}(1+C_{V})^{\frac{1}{2}}\delta_m^{\frac{1}{2}}/\varepsilon_m$.}
Estimate \eqref{thingtoestimate} as follows:
\begin{align}\label{estimate}
&|(f_m(x+tE_{m-1}(x)) - f_m(x))-(f_m(x_{m-1}+tE_{m-1}(x_{m-1}))-f_m(x_{m-1}))|\nonumber \\
&\qquad \leq |(f_m(x + tE_m(x))-f_m(x)) - (f_m(x_m+tE_m(x_{m}))-f_m(x_m))| \nonumber \\
&\qquad \quad + |(f_m(x_m+ tE_m(x_{m}))-f_m(x_m))\nonumber \\
&\qquad \quad \qquad -(f_m(x_{m-1}+tE_{m-1}(x_{m-1}))-f_m(x_{m-1}))|  \nonumber \\
&\qquad \quad +| f_m(x+tE_{m-1}(x))-f_m(x+tE_m(x))|.
\end{align}
We consider the three terms on the right side of \eqref{estimate} separately.

Firstly, Algorithm \ref{alg}(1) and Lemma \ref{lemmascalarlip} give:
\begin{align}\label{eqz1}
&(f_m(x+tE_m(x))-f_m(x))-(f_m (x_{m}+tE_{m}(x_{m}))- f_m(x_{m}))\\
&\qquad =(f_{m+1}(x+tE_m(x))-f_{m+1}(x)) - ( f_{m+1}(x_m+tE_m(x_{m})) - f_{m+1}(x_m)) \nonumber \\
&\qquad \quad  -t_m\langle x+tE_m(x), E_m(0)\rangle +t_m\langle x,E_m(0)\rangle \nonumber \\
&\qquad \quad+t_m\langle x_m+tE_m(x_{m}), E_m(0)\rangle- t_m\langle x_m, E_m(0)\rangle \nonumber \\
&\qquad=(f_{m+1}(x+tE_m(x))-f_{m+1}(x)) - ( f_{m+1}(x_m+tE_m(x_{m})) - f_{m+1}(x_m))\nonumber .
\end{align}
Using \eqref{eqz1} and $(x,E)\in D_{m+1}$ then gives:
\begin{align}\label{estimate2}
&|(f_m(x+tE_{m}(x))-f_m(x))-(f_m(x_{m}+tE_{m}(x_{m}))-f_m(x_{m}))| \nonumber \\
&\qquad \leq K(\sigma_m+(Ef_{m+1}(x)-E_mf_{m+1}(x_m))^{\frac{1}{4}})|t|.
\end{align}

For the second term in \eqref{estimate} we recall that, for the values of $t$ we are considering, Algorithm \ref{alg}(8) states:
\begin{align*}
&|(f_m(x_m+tE_m(x_{m}))-f_m(x_m))-(f_m(x_{m-1}+tE_{m-1}(x_{m-1}))-f_m(x_{m-1}))| \\
&\qquad \leq( E_mf_m(x_m)-E_{m-1}f_m(x_{m-1})+\sigma_{m-1}) |t|.
\end{align*}

The final term in \eqref{estimate} is estimated using $\mathrm{Lip}_{\mathbb{H}}(f_{m})\leq 1$, \eqref{linestolines} and \eqref{stimavett}:
\begin{align*}
|f_m(x+tE_{m-1}(x))-f_m(x+tE_m(x))| &\leq d(x+tE_{m-1}(x), x+tE_m(x))\\
&= d(tE_{m-1}(0),tE_{m}(0)) \\
& \leq \sigma_{m-1}|t|.
\end{align*}

Adding the three estimates and using \eqref{estimate3} then \eqref{defepsprimo} and Algorithm \ref{alg}(2) allows us to develop \eqref{estimate}:
\begin{align*}
&|(f_m(x+ tE_{m-1}(x))-f_m(x))-(f_m(x_{m-1}+tE_{m-1}(x_{m-1}))-f_m(x_{m-1}))| \\
& \qquad \leq K(\sigma_m + (Ef_{m+1}(x)-E_mf_{m+1}(x_m))^{\frac{1}{4}})|t| \\
&\qquad \quad + ( E_mf_m(x_m)-E_{m-1}f_m(x_{m-1})+\sigma_{m-1}) |t|\\
&\qquad \quad + \sigma_{m-1}|t|\\
& \qquad \leq K(\sigma_{m-1}-\varepsilon_{m}' + (Ef_{m}(x)-E_{m-1}f_{m}(x_{m-1}))^{\frac{1}{4}})|t|.
\end{align*}
This gives the required estimate of \eqref{thingtoestimate} for small $t$.

\emph{Suppose $C_{\mathrm{H}}^{2}(1+C_{V})^{\frac{1}{2}}\delta_m^{\frac{1}{2}}/\varepsilon_m \leq |t| < 1$.}
In particular, this implies:
\begin{equation}\label{refest} \delta_{m} \leq \varepsilon_{m}^{2}t^{2}/C_{H}^{4}(1+C_{V}) \leq \varepsilon_{m}|t|.\end{equation}
The last inequality above used that
\[\varepsilon_{m}|t|/C_{H}^{4}(1+C_{V})\leq \varepsilon_{m}/C_{H}^{4}(1+C_{V})\leq 1,\]
using $\varepsilon_{m}\leq 2$ and $C_{H}, C_{V}\geq 1$.

Estimate \eqref{thingtoestimate} as follows:
\begin{align*}
&|(f_m(x+tE_{m-1}(x))-f_m(x))-(f_m(x_{m-1}+tE_{m-1}(x_{m-1}))-f_m(x_{m-1}))|\\
&\qquad \leq |(f_m(x_m+tE_{m-1}(x_{m}))-f_m(x_m))\\
&\qquad \quad \quad -(f_m(x_{m-1}+tE_{m-1}(x_{m-1}))-f_m(x_{m-1}))|\\
&\qquad \quad + |f_{m}(x) - f_{m}(x_{m})|\\
&\qquad \quad + |f_m(x+tE_{m-1}(x)) - f_m(x_m+tE_{m-1}(x_{m}))|.
\end{align*}

The estimate of the first term is given by Algorithm \ref{alg}(7). This states:
\[(x_{m-1}, E_{m-1})\leq_{(f_m, \sigma_{m-1}-\varepsilon_m)} (x_m, E_m),\] 
which gives the inequality:
\begin{align*}
&|(f_m(x_m+tE_{m-1}(x_{m}))-f_m(x_m))-(f_m(x_{m-1}+tE_{m-1}(x_{m-1}))-f_m(x_{m-1}))|\\
&\qquad \leq K(\sigma_{m-1}-\varepsilon_{m} + (E_{m}f_{m}(x_{m})-E_{m-1}f_{m}(x_{m-1}))^{\frac{1}{4}})|t|.
\end{align*}

The estimate of the second term uses $\mathrm{Lip}_{\mathbb{H}}(f_{m})\leq 1$ and \eqref{refest}:
\[|f_m(x)-f_m(x_m)|\leq d(x,x_{m}) \leq \delta_m\leq \varepsilon_m |t|\leq K\varepsilon_m |t|/8.\]

We estimate the final term using Proposition \ref{euclideanheisenberg} to compare the Carnot-Carath\'{e}odory and the Euclidean distance. Notice that $x+tE_{m-1}(x)$ and  $x_{m} + tE_{m-1}(x_{m})$ belong to $\overline{B_{\mathbb{H}}(x_{0},2+\delta_{0})}$. Hence we can use the constants $C_{\mathrm{H}}$ and $C_{V}$ defined immediately before Algorithm \ref{alg}. For our current values of $t$ we can estimate as follows:
\begin{align*}
&|f_m(x+tE_{m-1}(x))-f_m(x_m+tE_{m-1}(x_{m}))|\\
&\qquad \leq d(x+tE_{m-1}(x),x_{m}+tE_{m-1}(x_{m}))\\
&\qquad \leq C_{\mathrm{H}}|x+tE_{m-1}(x)-x_{m}-tE_{m-1}(x_{m})|^{\frac{1}{2}}\\
&\qquad \leq C_{\mathrm{H}}(1+C_{V})^{\frac{1}{2}}|x-x_{m}|^{\frac{1}{2}}\\
&\qquad \leq C_{\mathrm{H}}^2(1+C_{V})^{\frac{1}{2}}d(x,x_{m})^{\frac{1}{2}}\\
&\qquad \leq C_{\mathrm{H}}^{2}(1+C_{V})^{\frac{1}{2}}\delta_{m}^{\frac{1}{2}}\\
&\qquad \leq \varepsilon_{m}|t|\\
&\qquad \leq K\varepsilon_{m} |t|/8.
\end{align*}

Combine the estimates of the three terms and use $A\geq 0$ to obtain:
\begin{align*}
&|(f_m(x+tE_{m-1}(x))-f_m(x))-(f_m(x_{m-1}+tE_{m-1}(x_{m-1}))-f_m(x_{m-1}))|\\
&\qquad \leq K(\sigma_{m-1}-\varepsilon_m/2+(E_mf_m(x_m)-E_{m-1}f_m(x_{m-1}))^{\frac{1}{4}})\\
&\qquad \leq K(\sigma_{m-1}-\varepsilon'_m+(Ef_m(x)-E_{m-1}f_m(x_{m-1}))^{\frac{1}{4}}).
\end{align*}
This gives the correct estimate of \eqref{thingtoestimate} for large $t$. Combining the two cases proves (1) for $m$.

\medskip

\emph{Proof of (2).} Suppose $(x,E)\in D_{m+1}$. Then $(x,E)\in D^{f_{m+1}}=D^{f_{m}}$ and Lemma \ref{inclusionballs} implies $d(x,x_{m-1})<\delta_{m-1}$. Combining this with (1) gives $(x,E)\in D_{m}$. This proves (2) for $m$.

\medskip
 
\emph{Proof of (3).} Suppose $(x,E)\in D_{m+1}$. Then $E_m f_{m+1}(x_m)\leq E f_{m+1}(x)$ using Algorithm \ref{alg}(5). Equivalently, by Algorithm \ref{alg}(1):
\[E_m f_{m}(x_m)+t_m\langle E_{m}(0),E_{m}(0) \rangle \leq E f_{m}(x)+t_m\langle E(0),E_{m}(0) \rangle.\]
Also $(x,E)\in D_m$ by (2) above, so Algorithm \ref{alg}(6) implies:
\[Ef_m(x)\leq E_m f_m(x_m)+\lambda_m.\]
Combining the two inequalities above gives $t_m\leq t_m\langle E(0),E_m(0)\rangle+\lambda_m$. Rearranging, this implies:
\[\langle E(0),E_{m}(0) \rangle\geq 1-\lambda_m/t_m.\]
Lemma \ref{closedirectioncloseposition} applied to $g(t):=tE(0)$ and $h(t):=tE_{m}(0)$ gives:
\begin{align*}
d(E(0),E_m(0)) &\leq C_{\mathrm{a}}| E(0)- E_m(0)|^{\frac{1}{2}}\\
&=C_{\mathrm{a}}(2-2\langle E(0), E_m(0) \rangle)^{\frac{1}{4}}\\
& \leq C_{\mathrm{a}}(2\lambda_m/t_m)^{\frac{1}{4}}\\
& \leq \sigma_m
\end{align*}
by Algorithm \ref{alg}(4). This proves (3) for $m$.
\end{proof}

We next study the convergence of $(x_{m}, E_{m})$ and $f_{m}$. We show that the directional derivatives converge to a directional derivative of the limiting function, and the limit of $(x_{m},E_{m})$ belongs to $D_{m}$ for every $m$. This is our adaptation of \cite[Lemma 3.4]{DM11}.

\begin{lemma}\label{lemmaquasifinale}
The following statements hold:
\begin{enumerate}
\item $f_{m}\to f$ pointwise, where $f:\mathbb{H}^n\to \mathbb{R}$ is Lipschitz and $\mathrm{Lip}_{\mathbb{H}}(f)\leq 1$,
\item $f-f_m$ is $\mathbb{H}$-linear and $\mathrm{Lip}_{\mathbb{H}}(f-f_m) \leq 2t_m$ for $m\geq 0$,
\item $x_{m} \to x_{\ast}\in N$ and $E_m(0) \to E_{\ast}(0)$ for some $E_{\ast} \in V$ with $\omega(E_{\ast})=1$. For $m \geq 0$:
\[d(x_{\ast},x_m)< \delta_m \quad \text{ and }\quad d(E_{\ast}(0),E_m(0))\leq \sigma_m,\]
\item $E_{\ast}f(x_{\ast})$ exists, is strictly positive and $E_mf_m(x_m)\uparrow E_{\ast}f(x_{\ast})$,
\item $(x_{m-1},E_{m-1})\leq_{(f_m, \sigma_{m-1}-\varepsilon'_m)} (x_{\ast},E_{\ast})$ for $m\geq 1$,
\item $(x_{\ast},E_{\ast})\in D_m$ for $m\geq 1$.
\end{enumerate}
\end{lemma}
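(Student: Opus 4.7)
The plan is to prove (1)--(3) by standard convergence arguments, then establish the crucial existence of $E_\ast f(x_\ast)$ and its identification with $\ell := \lim E_m f_m(x_m)$ in (4), and finally deduce (5) and (6) by passing to the limit and using the structure of $D_m$.

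For (1) and (2), I will use the closed form $f_m = f_0 + \langle\cdot, \sum_{k=0}^{m-1} t_k E_k(0)\rangle$ from \eqref{deffm}. Since Algorithm \ref{alg}(3) forces $t_{k+1} \leq t_k/2$, the tail satisfies $\sum_{k \geq m} t_k \leq 2t_m$; combined with Lemma \ref{lemmascalarlip}(1), this yields uniform convergence of $f_m$ to a Lipschitz $f$ with $\mathrm{Lip}_\mathbb{H}(f) \leq \mathrm{Lip}_\mathbb{H}(f_0) + \sum_k t_k \leq 1$, as well as the $\mathbb{H}$-linearity and bound on $f - f_m$. For (3), the nested balls from Lemma \ref{inclusionballs} force $\{x_m\}$ Cauchy; the limit $x_\ast$ lies in each $\overline{B_\mathbb{H}(x_m, \delta_m)} \subset B_\mathbb{H}(x_{m-1}, \delta_{m-1}) \subset U_{m-1}$ (by Algorithm \ref{alg}(8)), which simultaneously gives the strict bound $d(x_\ast, x_m) < \delta_m$ and $x_\ast \in \bigcap_k U_k = N$. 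For the directions, Lemma \ref{lemmachiave}(2) iterated shows $(x_n, E_n) \in D_{m+1}$ for all $n \geq m+1$, so Lemma \ref{lemmachiave}(3) yields $d(E_n(0), E_m(0)) \leq \sigma_m$; hence $\{E_m(0)\}$ is Cauchy with Euclidean-unit limit $v$, so $E_\ast \in V$ defined by $E_\ast(0) = v$ satisfies $\omega(E_\ast) = 1$, and passing to the limit recovers $d(E_\ast(0), E_m(0)) \leq \sigma_m$.

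For (4), monotonicity is immediate: Algorithm \ref{alg}(1, 5, 6) together with Lemma \ref{lemmascalarlip}(2) gives $E_m f_m(x_m) \geq E_{m-1} f_m(x_{m-1}) = E_{m-1} f_{m-1}(x_{m-1}) + t_{m-1}$, so the sequence is non-decreasing; Lemma \ref{lipismaximal} gives the bound $|E_m f_m(x_m)| \leq \mathrm{Lip}_\mathbb{H}(f_m) \leq 1$, so $\ell := \lim E_m f_m(x_m)$ exists and $\ell \geq E_0 f_0(x_0) + \sum t_k > 0$. To identify $E_\ast f(x_\ast)$ with $\ell$, I will use the decomposition
\[ \frac{f(x_\ast + tE_\ast(x_\ast)) - f(x_\ast)}{t} = \frac{f_m(x_\ast + tE_\ast(x_\ast)) - f_m(x_\ast)}{t} + \Big\langle E_\ast(0), \sum_{k \geq m} t_k E_k(0) \Big\rangle, \]
in which the second summand is $O(t_m)$ uniformly in $t$ by (2). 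Given $\varepsilon>0$, choose $m$ so that $\sigma_{m-1}$, $t_m$ and $\ell - E_m f_m(x_m)$ are small, and compare the first summand with $(f_m(x_m + tE_m(x_m)) - f_m(x_m))/t$ by a triangle inequality. The two horizontal lines $t \mapsto x_\ast + tE_\ast(x_\ast)$ and $t \mapsto x_m + tE_m(x_m)$ are split via the auxiliary curve $t \mapsto x_\ast + tE_m(x_\ast)$: the first comparison uses left-invariance and $d(E_\ast(0), E_m(0)) \leq \sigma_m$; the second uses $d(x_\ast, x_m) < \delta_m$, Lemma \ref{closedirectioncloseposition}, and Proposition \ref{euclideanheisenberg} (to convert a Euclidean estimate into a Carnot--Carath\'eodory one). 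The upper range for $|t|$ built into Algorithm \ref{alg}(8) is what guarantees that the range of admissible $|t|$ (those simultaneously large enough to absorb $\delta_m$-type position errors and small enough to be close to the directional derivative at $(x_m, E_m)$) is nonempty.

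For (5), apply Lemma \ref{lemmachiave}(1) to every $(x_n, E_n) \in D_{m+1}$ (for $n \geq m+1$), which by the estimates in its proof yields $(x_{m-1}, E_{m-1}) \leq_{(f_m, \sigma_{m-1} - \varepsilon'_m)} (x_n, E_n)$. Passing to $n \to \infty$ uses continuity of $E_{m-1}$ (so $x_n + tE_{m-1}(x_n) \to x_\ast + tE_{m-1}(x_\ast)$), continuity of $f_m$, and the convergence $E_n f_m(x_n) \to E_\ast f_m(x_\ast)$; the last of these follows from (4) plus the identity $E f_m = Ef - \langle E(0), \sum_{k \geq m} t_k E_k(0)\rangle$ valid for both the approximants and the limit by Lemma \ref{lemmascalarlip}(2). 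Finally, (6) combines (5), the strict bound $d(x_\ast, x_{m-1}) < \delta_{m-1}$ from (3), and the fact that $\varepsilon'_m \in (0, \sigma_{m-1})$ (cf.\ \eqref{defepsprimo}): Algorithm \ref{alg}(5) is then satisfied with witness $\varepsilon = \varepsilon'_m$, so $(x_\ast, E_\ast) \in D_m$. The main obstacle is the existence claim in (4): the directional derivative at the limit pair is not a priori inherited from those at the approximants, and the joint $m \to \infty$, $t \to 0$ limit must be organised so that the three sources of error (the $\mathbb{H}$-linear tail $f - f_m$, the position-and-direction approximation $(x_\ast, E_\ast) \approx (x_m, E_m)$, and the remainder in the directional derivative at $(x_m, E_m)$) can simultaneously be made small on a nonempty range of $t$.
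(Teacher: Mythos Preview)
Your treatment of (1)--(3), (5) and (6) matches the paper's and is correct; in particular, your route to (5) --- apply Lemma~\ref{lemmachiave}(1) to each $(x_n,E_n)\in D_{m+1}$ and let $n\to\infty$, using $E_n f_m(x_n)\to E_\ast f_m(x_\ast)$ via the $\mathbb{H}$-linear identity --- is exactly the paper's.

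The gap is in (4), precisely at the ``main obstacle'' you flag. In your second comparison you pass from the line $t\mapsto x_\ast+tE_m(x_\ast)$ to $t\mapsto x_m+tE_m(x_m)$: these have the \emph{same} direction $E_m$ but \emph{different} basepoints, so Lemma~\ref{closedirectioncloseposition} does not apply (it requires a common starting point). Proposition~\ref{euclideanheisenberg} alone yields only
\[
d\big(x_\ast+tE_m(x_\ast),\,x_m+tE_m(x_m)\big)\ \le\ C_{\mathrm H}^2(1+C_V)^{1/2}\,\delta_m^{1/2},
\]
which is \emph{constant in $t$}; after dividing by $|t|$ you get an error $\asymp \delta_m^{1/2}/|t|$ that blows up as $t\to 0$. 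You are right that this forces $|t|$ into a window bounded away from $0$, but exhibiting a nonempty window for each $m$ does not prove existence of $E_\ast f(x_\ast)$: that needs the difference quotient close to $\ell$ for \emph{all} small $t$. Algorithm~\ref{alg}(8) cannot rescue this, since the inequality it records compares $(x_m,E_m)$ to $(x_{m-1},E_{m-1})$, not to $(x_\ast,E_\ast)$, and its range of validity shrinks with $m$.

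The paper avoids this entirely by comparing $(x_\ast,E_\ast)$ not with $(x_m,E_m)$ but with $(x_{m-1},E_{m-1})$, through the relation $\le_{(f_m,\sigma_{m-1}-\varepsilon'_m)}$. Lemma~\ref{lemmachiave}(1) gives $(x_{m-1},E_{m-1})\le_{(f_m,\sigma_{m-1}-\varepsilon'_m)}(x_q,E_q)$ for every $q\ge m$, an estimate that is by design \emph{linear in $|t|$ and valid for all $t\in(-1,1)$}; letting $q\to\infty$ (exactly your argument for (5)) yields
\[
\big|\big(f_m(x_\ast+tE_{m-1}(x_\ast))-f_m(x_\ast)\big)-\big(f_m(x_{m-1}+tE_{m-1}(x_{m-1}))-f_m(x_{m-1})\big)\big|
\ \le\ K\big(\sigma_{m-1}+s_m^{1/4}\big)|t|
\]
for every $t\in(-1,1)$. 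Two further linear-in-$|t|$ corrections (replacing $E_{m-1}$ by $E_\ast$ via $d(E_\ast(0),E_{m-1}(0))\le\sigma_{m-1}$, and $f_m$ by $f$ via $\mathrm{Lip}_{\mathbb H}(f-f_m)\le 2t_m$) reduce matters to the existence of $E_{m-1}f_m(x_{m-1})$, which then handles all small $|t|$ at once. In short: run your (5) argument \emph{first}, read off (4) from it, and drop the direct comparison with $(x_m,E_m)$.
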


\begin{proof}
We prove each statement individually.

\medskip

\emph{Proof of (1).} Algorithm \ref{alg}(1) gives $f_m(x)=f_0(x)+\langle x,\sum_{k=0}^{m-1}t_kE_{k}(0)\rangle$. Define $f:\mathbb{H}^n\to\mathbb{R}$ by
\begin{equation}\label{deff}
f(x):=f_0(x)+\Big\langle x,\sum_{k=0}^{\infty}t_k E_{k}(0)\Big\rangle.
\end{equation}
Notice $|f(x)-f_m(x)|\leq | x | \sum_{k=m}^{\infty} t_k |E_{k}(0)|$. Hence $f_m\to f$ pointwise and, since $\mathrm{Lip}_{\mathbb{H}}(f_m)\leq 1$, we deduce $\mathrm{Lip}_{\mathbb{H}}(f)\leq 1$.

\medskip

\emph{Proof of (2).} Lemma \ref{lemmascalarlip} shows that $f-f_{m}$ is $\mathbb{H}$-linear. Using also Algorithm \ref{alg}(3) shows that for every $m\geq 0$:
\[\mathrm{Lip}_{\mathbb{H}}(f-f_m)  \leq \sum_{k=m}^{\infty} t_k \leq t_m \sum_{k=m}^{\infty} \frac{1}{2^{k-m}} \leq 2t_m.\]

\medskip

\emph{Proof of (3).} Let $q\geq m\geq 0$. The definition of $D_{q+1}$ in Algorithm \ref{alg}(5) shows that $(x_q,E_q)\in D_{q+1}$. Hence Lemma \ref{lemmachiave}(2,3) imply that $(x_q,E_q)\in D_{m+1}$, and consequently:
\begin{equation}\label{Cauchy1}
d(E_q(0),E_m(0)) \leq \sigma_m.
\end{equation}
Since $(x_q,E_q)\in D_{m+1}$, Algorithm \ref{alg}(5) implies:
\begin{equation}\label{Cauchy2}
d(x_q,x_m)< \delta_m.
\end{equation}

Since $\sigma_m, \delta_m \to 0$ we see that $(x_m)_{m=1}^{\infty}$ and $(E_{m}(0))_{m=1}^{\infty}$ are Cauchy sequences, so converge to some $x_{\ast}\in\mathbb{H}^{n}$ and $v\in \mathbb{H}^{n}$. Since $E_{m}\in V$ and $\omega(E_{m})=1$, we know $|p(v)|=1$ and $v_{2n+1}=0$. Using group translations, we can extend $v$ to a vector field $E_{\ast}\in V$ with $\omega(E_{\ast})=1$ and $E_{\ast}(0)=v$. Letting $q\to \infty$ in \eqref{Cauchy1} and \eqref{Cauchy2} implies $d(E_{\ast}(0),E_m(0)) \leq \sigma_m$ and $d(x_{\ast},x_m)\leq \delta_m$. Lemma \ref{inclusionballs} then gives the strict inequality $d(x_{\ast},x_m)< \delta_m$.

We now know that $x_{\ast}\in \overline{B_{\mathbb{H}}(x_m,\delta_m)}$ for every $m\geq 1$. Recall that $N=\cap_{m=0}^{\infty} U_m$ for open sets $U_m \subset \mathbb{H}^{n}$, and Algorithm \ref{alg}(8) states $\overline{B_{\mathbb{H}}(x_{m},\delta_{m})}\subset U_{m}$. Hence $x_{\ast}\in N$.

\medskip

\emph{Proof of (4).} As in the proof of (3) we have $(x_q,E_q)\in D_{m+1}$ for every $q\geq m\geq 0$. Therefore, using Lemma \ref{lemmachiave}(1), $q\geq m\geq 1$ implies:
\begin{equation}\label{bla}
(x_{m-1}, E_{m-1})\leq_{(f_m,\sigma_{m-1}-\varepsilon'_m)} (x_q,E_q).
\end{equation}
Applying Algorithm \ref{alg}(1) and \eqref{bla} (with $m$ and $q$ replaced by $q+1$) gives:
\begin{equation}\label{stima} 
E_qf_q(x_q)< E_q f_{q+1}(x_q)\leq E_{q+1}f_{q+1}(x_{q+1}) \quad \mbox{for}\quad q \geq 0.
\end{equation} 
Hence $(E_q f_q(x_q))_{q=0}^{\infty}$ is strictly increasing and positive as $E_0f_0(x_0)\geq 0$.

Recall $\mathrm{Lip}_{\mathbb{H}}(f_q)\leq 1$ for $q\geq 1$. Hence, by Lemma \ref{lipismaximal}, the sequence $(E_qf_q(x_q))_{q=1}^{\infty}$ is bounded above by $1$. Consequently $E_qf_q(x_q)\to L$ for some $0<L\leq 1$. Inequality \eqref{stima} implies that also $E_q f_{q+1}(x_q) \to L$. Further:
\[E_q f(x_q)=E_q f_q(x_q)+E_q (f-f_q)(x_q)\]
and $|E_q (f-f_q)(x_q)|\leq \mathrm{Lip}_{\mathbb{H}}(f-f_q) \leq 2t_{q} \to 0$. Hence also $E_qf(x_q) \to L$.

Let $q\geq m\geq 0$ and consider:
\[s_{m,q}:=E_qf_m(x_q)-E_{m-1}f_m(x_{m-1}).\]
By \eqref{bla} we have $s_{m,q}\geq 0$. Letting $q\to \infty$, writing $f_{m}=f+(f_{m}-f)$, and using $\mathbb{H}$-linearity of $f_{m}-f$ and $E_qf(x_q)\to L$  implies:
\begin{equation}\label{defiC}
s_{m,q}\to s_{m}:=(f_m-f)(E_{\ast}(0))+L-E_{m-1}f_m(x_{m-1})\geq 0.
\end{equation}
Also $s_{m} \to 0$ as $m\to \infty$ since $\mathrm{Lip}_{\mathbb{H}}(f_{m}-f)\leq 2t_{m}$ and $E_{m-1}f_{m}(x_{m-1})\to L$.
Using \eqref{bla} shows that for $t\in (-1,1)$:
\begin{align}\label{bla2}
&|(f_m(x_q+ tE_{m-1}(x_{q})) - f_m(x_q))-(f_m(x_{m-1} + tE_{m-1}(x_{m-1}))-f_m(x_{m-1}))|\nonumber \\
&\qquad \leq K (\sigma_{m-1}-\varepsilon'_m+(s_{m,q})^{\frac{1}{4}}) |t|.
\end{align}
Letting $q\to \infty$ in \eqref{bla2} shows that for $t\in (-1,1)$:
\begin{align}\label{eqncruc}
&|(f_m(x_{\ast} + tE_{m-1}(x_{\ast}))-f_m(x_{\ast}))-(f_m(x_{m-1}+tE_{m-1}(x_{m-1}))-f_m(x_{m-1}))|\nonumber \\
&\qquad \leq K(\sigma_{m-1}-\varepsilon'_m+(s_{m})^{\frac{1}{4}})|t|.
\end{align}
Using $\mathrm{Lip}_{\mathbb{H}}(f)\leq 1$ and $d(E_{\ast}(0),E_{m-1}(0))\leq \sigma_{m-1}$ from (3) of the present Lemma gives:
\begin{align*}
|f(x_{\ast}+tE_{\ast}(x_{\ast}))-f(x_{\ast}+tE_{m-1}(x_{\ast}))|&\leq d(x_{\ast}(tE_{\ast}(0)),x_{\ast}(tE_{m-1}(0)))\\
&\leq \sigma_{m-1}|t|.
\end{align*}
Since $f-f_{m}$ is $\mathbb{H}$-linear we can use $\mathrm{Lip}_{\mathbb{H}}(f-f_m) \leq 2t_m$ to estimate:
\begin{align*}
|(f-f_m)(x_{\ast}+tE_{m-1}(x_{\ast}))-(f-f_m)(x_{\ast})| &= |(f-f_{m})(tE_{m-1}(0))|\\
&\leq t\mathrm{Lip}_{\mathbb{H}}(f-f_{m})\\
&\leq 2t_{m}|t|.
\end{align*}

Combining the previous three inequalities shows that for $t\in (-1,1)$:
\begin{align*}
&|(f(x_{\ast}+tE_{\ast}(x_{\ast}))-f(x_{\ast}))-(f_m(x_{m-1}+tE_{m-1}(x_{m-1}))-f_m(x_{m-1}))|\\
&\qquad \leq |(f_m(x_{\ast}+tE_{m-1}(x_{\ast}))-f_m(x_{\ast}))\\
&\qquad \quad \quad -(f_m(x_{m-1}+tE_{m-1}(x_{m-1}))-f_m(x_{m-1}))|\\
&\qquad \quad +|f(x_{\ast}+tE_{\ast}(x_{\ast}))-f(x_{\ast}+tE_{m-1}(x_{\ast}))|\\
&\qquad \quad +|(f-f_m)(x_{\ast}+tE_{m-1}(x_{\ast}))-(f-f_m)(x_{\ast})|\\
&\qquad \leq (K(\sigma_{m-1}-\varepsilon'_m+(s_{m})^{\frac{1}{4}})+\sigma_{m-1}+2t_m)|t|.
\end{align*}

Fix $\varepsilon>0$ and choose $m\geq 1$ such that:
\[K(\sigma_{m-1}-\varepsilon'_m+(s_{m})^{\frac{1}{4}})+\sigma_{m-1}+2t_m\leq \varepsilon/3\]
and
\[|E_{m-1}f_m(x_{m-1})-L|\leq \varepsilon/3.\]
Using the definition of $E_{m-1}f_{m}(x_{m-1})$, fix $0<\delta<1$ such that for $|t|< \delta$:
\[|f_m(x_{m-1}+tE_{m-1}(x_{m-1}))-f_m(x_{m-1})-tE_{m-1}f_m(x_{m-1})|\leq \varepsilon|t|/3.\]
Hence for $|t|< \delta$:
\begin{align*}
&|f(x_{\ast}+tE_{\ast}(x_{\ast}))-f(x_{\ast})-tL|\\
&\qquad \leq|(f(x_{\ast}+tE_{\ast}(x_{\ast}))-f(x_{\ast}))-(f_m(x_{m-1}+tE_{m-1}(x_{m-1}))-f_m(x_{m-1}))|\\
&\qquad \quad +|f_m(x_{m-1}+tE_{m-1}(x_{m-1}))-f_m(x_{m-1})-tE_{m-1}f_m(x_{m-1})|\\
&\qquad \quad +|E_{m-1}f_m(x_{m-1})-L| |t|\\
&\qquad \leq \varepsilon |t|.
\end{align*}
This proves that $E_{\ast}f(x_{\ast})$ exists and is equal to $L$. We already saw that $(E_qf_q(x_q))_{q=1}^{\infty}$ is a strictly increasing sequence of positive numbers. This proves (4).

\medskip

\emph{Proof of (5).} The definition of $L$ and Lemma \ref{lemmascalarlip} implies:
\[E_{\ast}f_m(x_{\ast})=L+E_{\ast}(f_{m}-f)(x_{\ast})=L+(f_m-f)(E_{\ast}(0)).\]
Using \eqref{defiC} shows $s_{m}=E_{\ast}f_m(x_{\ast})-E_{m-1}f_m(x_{m-1})\geq 0$. Substituting this in \eqref{eqncruc} gives (5).

\medskip

\emph{Proof of (6).} Property (6) is a consequence of (3), (4) and (5).
\end{proof}

We now prove that the limit directional derivative $E_{\ast}f(x_{\ast})$ is almost locally maximal in horizontal directions. This is our adaptation of \cite[Lemma 3.5]{DM11}. 

\begin{lemma}\label{almostlocmax}
For all $\varepsilon>0$ there is $\delta_{\varepsilon}>0$ such that if $(x,E)\in D^f$ satisfies $d(x_{\ast},x)\leq \delta_{\varepsilon}$ and $(x_{\ast},E_{\ast})\leq_{(f,0)}(x,E)$ then:
\[Ef(x)<E_{\ast}f(x_{\ast})+\varepsilon.\]
\end{lemma}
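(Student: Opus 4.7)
I plan a direct argument. Given $\varepsilon>0$, choose $m$ large enough that $\lambda_{m+1}+2t_{m+1}<\varepsilon$ and so that several auxiliary quantities arising in the estimates below are small; all the relevant sequences ($\sigma_m,\,t_m,\,\lambda_m,\,\delta_m,\,\varepsilon'_m,\,s_{m+1}$, and $\eta_m:=E_{\ast}f(x_{\ast})-E_mf_m(x_m)$) tend to $0$ by Lemma \ref{inclusionballs} and Lemma \ref{lemmaquasifinale}(4). Set $\delta_{\varepsilon}\in(0,\delta_m-d(x_{\ast},x_m))$, which forces $d(x,x_m)<\delta_m$ whenever $d(x_{\ast},x)\leq\delta_{\varepsilon}$. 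The goal is then to show that such $(x,E)$ lies in $D_{m+1}$: once established, Algorithm \ref{alg}(6) at step $m+1$ yields $Ef_{m+1}(x)\leq E_{m+1}f_{m+1}(x_{m+1})+\lambda_{m+1}$, and converting back via $\mathrm{Lip}_{\mathbb{H}}(f-f_{m+1})\leq 2t_{m+1}$ together with $E_{m+1}f_{m+1}(x_{m+1})\leq E_{\ast}f(x_{\ast})$ gives
\[Ef(x)\leq E_{\ast}f(x_{\ast})+\lambda_{m+1}+2t_{m+1}<E_{\ast}f(x_{\ast})+\varepsilon,\]
as required.

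Membership of $(x,E)$ in $D_{m+1}$ reduces to three conditions. The location $d(x,x_m)<\delta_m$ is arranged. The directional-derivative inequality $Ef_{m+1}(x)\geq E_mf_{m+1}(x_m)=E_mf_m(x_m)+t_m$ either holds, or else fails and already gives $Ef(x)<E_{\ast}f(x_{\ast})+t_m+2t_{m+1}\leq E_{\ast}f(x_{\ast})+\varepsilon$ directly, so we may assume it holds. The remaining task is to find $\tilde\varepsilon\in(0,\sigma_m)$ bounding
\[|(f_{m+1}(x+tE_m(x))-f_{m+1}(x))-(f_{m+1}(x_m+tE_m(x_m))-f_{m+1}(x_m))|\]
by $K(\sigma_m-\tilde\varepsilon+B^{1/4})|t|$, where $B:=Ef_{m+1}(x)-E_mf_{m+1}(x_m)$. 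I would decompose the left-hand side through an intermediate use of $(x_{\ast},E_m)$: the piece between $x$ and $x_{\ast}$ in direction $E_m$ is converted to direction $E_{\ast}$ at cost $2|t|\sigma_m$ (using $\mathrm{Lip}_{\mathbb{H}}(f_{m+1})\leq 1$ and Lemma \ref{lemmaquasifinale}(3)), whereupon the cancellation of the $\mathbb{H}$-linear $f_{m+1}-f$ allows the hypothesis $(x_{\ast},E_{\ast})\leq_{(f,0)}(x,E)$ to contribute $KA^{1/4}|t|$ with $A:=Ef(x)-E_{\ast}f(x_{\ast})$; the piece between $x_{\ast}$ and $x_m$ in direction $E_m$ is exactly controlled by Lemma \ref{lemmaquasifinale}(5) applied at step $m+1$, producing $K(\sigma_m-\varepsilon'_{m+1}+s_{m+1}^{1/4})|t|$.

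The main obstacle is closing the gap between this derived upper bound and the target $K(\sigma_m-\tilde\varepsilon+B^{1/4})|t|$. One must note that $A$ and $B$ differ by at most $O(t_m+\eta_m)$, so $A^{1/4}\leq B^{1/4}+O((t_m+\eta_m)^{1/4})$, and then absorb the residual $2\sigma_m$ and $Ks_{m+1}^{1/4}$ into the slack $K\varepsilon'_{m+1}$ so as to leave a strictly positive $\tilde\varepsilon$. Following the strategy of Lemma \ref{lemmachiave}(1), I would split into two regimes in $|t|$: for $|t|<C_{\mathrm{H}}^2(1+C_V)^{1/2}\delta_m^{1/2}/\varepsilon_m$, the tight linear bound of Algorithm \ref{alg}(8) replaces the generic estimate; for larger $|t|$, a direct Hölder comparison of the endpoints via Proposition \ref{euclideanheisenberg} combined with $d(x,x_m)<\delta_m$ suffices. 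Balancing the decay rates of $\sigma_m$, $t_m$ and $\varepsilon_m'$ encoded in Algorithm \ref{alg}(2)--(4) and (7)--(8) is the technical heart of the argument.
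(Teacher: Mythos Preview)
Your overall plan---place the competitor $(x,E)$ in some $D_{m+1}$ and then invoke Algorithm~\ref{alg}(6)---is the paper's strategy up to an index shift (the paper lands in $D_m$). Calling it ``direct'' versus ``by contradiction'' is cosmetic: when $A:=Ef(x)-E_{\ast}f(x_{\ast})<\varepsilon$ the conclusion is immediate, so only the case $A\geq\varepsilon$ has content.

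The genuine gap is in your conversion of $A^{1/4}$ to $B^{1/4}$. You invoke only subadditivity of $x\mapsto x^{1/4}$ to write $A^{1/4}\leq B^{1/4}+O((t_m+\eta_m)^{1/4})$ and then propose to absorb this, together with $2\sigma_m$ and $Ks_{m+1}^{1/4}$, into the slack $K\varepsilon'_{m+1}$. That absorption fails: the algorithm only enforces $t_m<\sigma_{m-1}/(4m)$ and $\sigma_m<\sigma_{m-1}/4$, so $t_m^{1/4}$ is of order $\sigma_{m-1}^{1/4}$, which for small $\sigma_{m-1}$ dwarfs both $\sigma_m$ and $\varepsilon'_{m+1}\leq\sigma_m/2$. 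Worse, $\varepsilon'_{m+1}=\min\{\varepsilon_{m+1}/2,\sigma_m/2\}$ and the algorithm imposes \emph{no} lower bound on $\varepsilon_{m+1}$, so it cannot be counted on to absorb any prescribed positive residual. The paper resolves this precisely by working under the hypothesis $A\geq\varepsilon$: the factorization
\[A-B=(A^{1/4}-B^{1/4})(A^{3/4}+A^{1/2}B^{1/4}+A^{1/4}B^{1/2}+B^{3/4})\geq(A^{1/4}-B^{1/4})\,\varepsilon^{3/4}\]
then yields the \emph{linear-in-$t_m$} bound $A^{1/4}-B^{1/4}\leq 4t_m/\varepsilon^{3/4}\leq mt_m\leq\sigma_{m-1}/4$ once one chooses $m\geq 4/\varepsilon^{3/4}$; this is exactly the size that Algorithm~\ref{alg}(3) was engineered to make absorbable into the $K\sigma_{m-1}$ budget (not into $\varepsilon'_m$, which merely needs to be positive). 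Relatedly, for the small-$|t|$ regime the paper does not rely on the generic bound of Lemma~\ref{lemmaquasifinale}(5) but instead shrinks $\delta_\varepsilon$ to obtain the sharper estimate \eqref{estimated2} with coefficient $E_{\ast}f_m(x_{\ast})-E_{m-1}f_m(x_{m-1})+\sigma_{m-1}$ (no factor $K$, no $s_m^{1/4}$), which is what makes the final bookkeeping close.
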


\begin{proof}
Fix $\varepsilon>0$. Use Lemma \ref{inclusionballs} to choose $m\geq 1$ such that:
\begin{equation}\label{param} 
m\geq 4/\varepsilon^{\frac{3}{4}}\quad \mbox{and}\quad \lambda_m,t_m\leq \varepsilon/4.
\end{equation}
Recall $\varepsilon'_m=\min\{\varepsilon_m/2,\, \sigma_{m-1}/2\}$. Using Lemma \ref{lemmaquasifinale}(3), fix $\delta_{\varepsilon}>0$ such that $\delta_{\varepsilon}<\delta_{m-1}-d(x_{\ast},x_{m-1})$ such that for every 
$|t|< C_{\mathrm{H}}^{2}(1+C_{V})^{\frac{1}{2}}\delta_{\varepsilon}^{\frac{1}{2}}/\varepsilon_{m}'$:
\begin{align}\label{estimated2}
&|(f_m(x_{\ast}+tE_{\ast}(x_{\ast}))-f_m(x_{\ast}))-(f_m(x_{m-1}+tE_{m-1}(x_{m-1}))-f_m(x_{m-1}))|\nonumber \\
&\qquad \leq (E_{\ast}f_m(x_{\ast})-E_{m-1}f_m(x_{m-1})+\sigma_{m-1})|t|.
\end{align}
Such $\delta_{\varepsilon}$ exists since Lemma \ref{lemmaquasifinale}(5) implies $E_{\ast}f_m(x_{\ast})\geq E_{m-1}f_m(x_{m-1})$. 

We argue by contradiction. Suppose that $(x,E)\in D^f$ satisfies $d(x_{\ast},x)\leq \delta_{\varepsilon}$, $(x_{\ast},E_{\ast})\leq_{(f,0)} (x,E)$ and $Ef(x)\geq E_{\ast}f(x_{\ast})+\varepsilon$. We plan to show that $(x,E)\in D_m$. We first observe that this gives a contradiction. Indeed, Algorithm \ref{alg}(6) and the monotone convergence $E_mf_m(x_m)\uparrow E_{\ast}f(x_{\ast})$ would then imply:
\[Ef_m(x)\leq E_mf_m(x_m)+\lambda_m\leq E_{\ast}f(x_{\ast})+\lambda_m.\]
From Lemma \ref{lemmaquasifinale}(2) and \eqref{param} we deduce:
\begin{align*}
Ef(x)-E_{\ast}f(x_{\ast})&=(Ef_m(x)-E_{\ast}f(x_{\ast}))+E(f-f_m)(x)\\
&\leq \lambda_m+2t_m\\
&\leq 3\varepsilon /4.
\end{align*}
This contradicts the assumption that $Ef(x)\geq E_{\ast}f(x_{\ast})+\varepsilon$.

\medskip

\emph{Proof that $(x,E)\in D_m$.} Notice that $(x,E)\in D^{f_{m}}$ since $(x,E)\in D^f$ and $D^f=D^{f_{m}}$ because $f-f_{m}$ is $\mathbb{H}$-linear. Next observe:
\[d(x,x_{m-1})\leq d(x,x_{\ast})+d(x_{\ast},x_{m-1}) < \delta_{m-1}.\]
Hence it suffices to show that $(x_{m-1},E_{m-1})\leq_{(f_m, \sigma_{m-1}-\varepsilon'_m/2)} (x,E)$.
Lemma \ref{lipismaximal} implies:
\[|E(f-f_{m})(x)|,\, |E_{\ast}(f-f_{m})(x_{\ast})|\leq \mathrm{Lip}_{\mathbb{H}}(f-f_{m}).\]
Hence using \eqref{param} and our definition of $(x,E)$, we deduce:
\begin{align*}
Ef_m(x)-E_{\ast}f_m(x_{\ast})&\geq Ef(x)-E_{\ast}f(x_{\ast})-2\mathrm{Lip}_{\mathbb{H}}(f_m-f)\\
&\geq \varepsilon -4t_m\geq 0.
\end{align*}
Lemma \ref{lemmaquasifinale}(6) states that $(x_{\ast}, E_{\ast})\in D_{m}$, which implies $E_{m-1}f_m(x_{m-1})\leq E_{\ast}f_m(x_{\ast})$. Hence:
\[Ef_m(x)\geq E_{\ast}f_m(x_{\ast})\geq E_{m-1}f_m(x_{m-1}).\]
In particular, the inequality $Ef_{m}(x)\geq E_{m-1}f_m(x_{m-1})$ proves the first half of the statement $(x_{m-1},E_{m-1})\leq_{(f_m, \sigma_{m-1}-\varepsilon'_m/2)} (x,E)$.

We next deduce several inequalities from our hypotheses. Denote:
\begin{itemize}
\item $A:=Ef(x)-E_{\ast}f(x_{\ast})$,
\item $B:=Ef_m(x)-E_{\ast}f_m(x_{\ast})$,
\item $C:=Ef_m(x)-E_{m-1}f_m(x_{m-1})$.
\end{itemize}
Our definition of $(x,E)$ states $A\geq \varepsilon$, while the inequalities above give $0\leq B\leq C$. Also $A,\, B,\, C\leq 2$ by Lemma \ref{lipismaximal}. Recall the factorization:
\begin{equation}\label{factorizerepeat}
(A^{\frac{1}{4}}-B^{\frac{1}{4}})(B^{\frac{3}{4}}+B^{\frac{1}{2}}A^{\frac{1}{4}}+B^{\frac{1}{4}}A^{\frac{1}{2}}+A^{\frac{3}{4}})=A-B.
\end{equation}
Using Lemma \ref{lemmaquasifinale}(2), \eqref{param} and Algorithm \ref{alg}(3), we obtain:
\begin{align*}
A^{\frac{1}{4}}-B^{\frac{1}{4}} &\leq (A-B)/\varepsilon^{\frac{3}{4}}\\
&=(E(f-f_m)(x)-E_{\ast}(f-f_m)(x_{\ast}))/\varepsilon^{\frac{3}{4}}\\
&\leq 4t_m /\varepsilon^{\frac{3}{4}}\\
& \leq mt_m\\
&\leq \sigma_{m-1}/4.
\end{align*}
Since $B,\, C\leq 2$ and $K\geq 8$ we have:
\[B^{\frac{3}{4}}+B^{\frac{1}{2}}A^{\frac{1}{4}}+B^{\frac{1}{4}}A^{\frac{1}{2}}+A^{\frac{3}{4}}\leq 8 \leq K.\]
Hence using \eqref{factorizerepeat} with $A$ replaced by $C$ gives:
\[KC^{\frac{1}{4}}-KB^{\frac{1}{4}}\geq C-B=E_{\ast}f_m(x_{\ast})-E_{m-1}f_m(x_{m-1}).\]
Combining our estimates gives:
\begin{align}\label{stima32}
&E_{\ast}f_m(x_{\ast})-E_{m-1}f_m(x_{m-1})+K(Ef(x)-E_{\ast}f(x_{\ast}))^{\frac{1}{4}}\nonumber \\
&\qquad =E_{\ast}f_m(x_{\ast})-E_{m-1}f_m(x_{m-1})+KA^{\frac{1}{4}}\nonumber\\
&\qquad \leq KC^{\frac{1}{4}}-KB^{\frac{1}{4}}+K(B^{\frac{1}{4}}+\sigma_{m-1}/4) \nonumber\\
&\qquad = K((Ef_m(x)-E_{m-1}f_m(x_{m-1}))^{\frac{1}{4}}+\sigma_{m-1}/4).
\end{align}

We can now prove the second half of $(x_{m-1},E_{m-1})\leq_{(f_m, \sigma_{m-1}-\varepsilon'_m/2)} (x,E)$. We need to estimate:
\begin{equation}\label{incases}
|(f_m(x+tE_{m-1}(x))-f_m(x))-(f_m(x_{m-1}+tE_{m-1}(x_{m-1}))-f_m(x_{m-1}))|.
\end{equation}
We consider two cases, depending on whether $t$ is small or large.

\medskip

\emph{Suppose $|t|\leq C_{\mathrm{H}}^{2}(1+C_{V})^{\frac{1}{2}}\delta_{\varepsilon}^{\frac{1}{2}}/\varepsilon_{m}'$.}
To estimate \eqref{incases} we use the inequality:
\begin{align}\label{toestimate}
&|(f_m(x+tE_{m-1}(x)) - f_m(x))-(f_m(x_{m-1}+tE_{m-1}(x_{m-1})) - f_m(x_{m-1})|\nonumber \\
&\qquad \leq |(f_m(x + tE_{\ast}(x)) - f_m(x))-(f_m(x_{\ast}+tE_{\ast}(x_{\ast})) - f_m(x_{\ast}))| \nonumber \\
&\qquad \quad + |(f_m(x_{\ast}+tE_{\ast}(x_{\ast})) - f_m(x_{\ast}))\nonumber \\
&\qquad \quad \qquad -(f_m(x_{m-1}+tE_{m-1}(x_{m-1}))-f_m(x_{m-1}))| \nonumber \\
&\qquad \quad +|f_m(x+tE_{m-1}(x))-f_m(x+tE_{\ast}(x))|.
\end{align}
Using Lemma \ref{lemmascalarlip}, the hypothesis $(x_{\ast},E_{\ast})\leq_{(f,0)} (x,E)$ and $\mathbb{H}$-linearity of $f_{m}-f$, we can estimate the first term in \eqref{toestimate}:
\begin{align}\label{estimated1}
&|(f_m(x+tE_{\ast}(x_{\ast}))-f_m(x))-(f_m(x_{\ast}+tE_{\ast}(x_{\ast}))-f_m(x_{\ast}))|\nonumber \\
&\qquad =|f(x+tE_{\ast}(x))-f(x))-(f(x_{\ast}+tE_{\ast}(x_{\ast}))-f(x_{\ast}))| \nonumber \\
&\qquad \leq K(Ef(x)-E_{\ast}f(x_{\ast}))^{\frac{1}{4}}|t|.
\end{align}
Using \eqref{estimated2} and the assumption that $t$ is small bounds the second term in \eqref{toestimate} by:
\[(E_{\ast}f_m(x_{\ast})-E_{m-1}f_m(x_{m-1})+\sigma_{m-1})|t|.\]
Lemma \ref{lemmaquasifinale} implies that the third term of \eqref{toestimate} is bounded above by $\sigma_{m-1}|t|$.
By combining the estimates of each term and using \eqref{stima32} we develop \eqref{toestimate}:
\begin{align}\label{hi}
&|(f_m(x+tE_{m-1}(x))-f_m(x)) - (f_m(x_{m-1}+tE_{m-1}(x_{m-1}))-f_m(x_{m-1}))| \nonumber \\
&\qquad \leq (K(Ef(x)-E_{\ast}f(x_{\ast}))^{\frac{1}{4}} + E_{\ast}f_m(x_{\ast})-E_{m-1}f_m(x_{m-1})+2\sigma_{m-1})|t| \nonumber \\
&\qquad \leq (K((Ef_{m}(x)-E_{m-1}f_{m}(x_{m-1}))^{\frac{1}{4}}+\sigma_{m-1}/4) + 2\sigma_{m-1})|t|\nonumber \\
&\qquad \leq K(\sigma_{m-1}-\varepsilon'_m/2+(Ef_m(x)-E_{m-1}f_m(x_{m-1}))^{\frac{1}{4}})|t|,
\end{align}
using $\varepsilon'_m\leq \sigma_{m-1}/2$ and $K\geq 8$ in the final line. This gives the correct estimate of \eqref{incases} for small $t$.

\medskip

\emph{Suppose $C_{\mathrm{H}}^{2}(1+C_{V})^{\frac{1}{2}}\delta_{\varepsilon}^{\frac{1}{2}}/\varepsilon_{m}' \leq |t|\leq 1$.}
To estimate \eqref{incases} we use the inequality:
\begin{align*}
&|(f_m(x+tE_{m-1}(x))-f_m(x))-(f_m(x_{m-1}+tE_{m-1}(x_{m-1}))-f_m(x_{m-1}))|\\
&\qquad \leq |(f_m(x_{\ast}+tE_{m-1}(x_{\ast}))-f_m(x_{\ast}))\\
&\qquad \quad \quad -(f_m(x_{m-1}+tE_{m-1}(x_{m-1}))-f_m(x_{m-1}))|\\
&\qquad \quad +|f_m(x_{\ast}) - f_m(x)| + |f_m(x+tE_{m-1}(x)) - f_m(x_{\ast}+tE_{m-1}(x_{\ast}))|.
\end{align*}
Lemma \ref{lemmaquasifinale}(5) shows that the first term is bounded above by:
\[K(\sigma_{m-1}-\varepsilon'_m+( E_{\ast}f_m(x_{\ast})-E_{m-1}f_m(x_{m-1}))^{\frac{1}{4}})|t|.\]
The second term is bounded by $d(x_{\ast},x)\leq \delta_{\varepsilon} \leq \varepsilon_{m}'|t|\leq K\varepsilon_{m}'|t|/4$. For the third term we use Proposition \ref{euclideanheisenberg} to relate the Carnot-Carath\'{e}odory distance and the Euclidean distance. Notice $x+tE_{m-1}(x), x_{\ast}+tE_{m-1}(x_{\ast}) \in \overline{B_{\mathbb{H}}(x_{0},2+\delta_{0})}$. Hence we can use the constants $C_{\mathrm{H}}$ and $C_{V}$ fixed before Algorithm \ref{alg}:
\begin{align*}
&|f_m(x+tE_{m-1}(x)) - f_m(x_{\ast}+tE_{m-1}(x_{\ast}))|\\
&\qquad \leq C_{\mathrm{H}}|x+tE_{m-1}(x)-x_{\ast}-tE_{m-1}(x_{\ast})|^{\frac{1}{2}}\\
&\qquad \leq C_{\mathrm{H}}(1+C_{V})^{\frac{1}{2}}|x_{\ast}-x|^{\frac{1}{2}}\\
&\qquad \leq C_{\mathrm{H}}^2(1+C_{V})^{\frac{1}{2}}d(x_{\ast},x)^{\frac{1}{2}}\\
&\qquad \leq C_{\mathrm{H}}^{2}(1+C_{V})^{\frac{1}{2}}\delta_{\varepsilon}^{\frac{1}{2}}\\
&\qquad \leq \varepsilon_{m}'|t|\\
&\qquad \leq K\varepsilon_{m}'|t|/4.
\end{align*}
Putting together the three estimates and using $E_{\ast}f_m(x_{\ast})\leq Ef_m(x)$ gives:
\begin{align*}
&|(f_m(x+tE_{m-1}(x))-f_m(x))-(f_m(x_{m-1}+tE_{m-1}(x_{m-1}))-f_m(x_{m-1}))| \\
&\qquad \leq K(\sigma_{m-1}-\varepsilon'_m/2+( Ef_m(x)-E_{m-1}f_m(x_{m-1}))^{\frac{1}{4}})|t|.
\end{align*}
This gives the correct estimate of \eqref{incases} for large $t$.

Combining the two cases estimates \eqref{incases} for any $t\in (-1,1)$. Hence:
\[(x_{m-1},E_{m-1})\leq_{(f_m,\sigma_{m-1}-\varepsilon'_m/2)}(x,E).\]
This concludes the proof.
\end{proof}

We can now conclude by proving Proposition \ref{DoreMaleva} and hence, using also Theorem \ref{almostmaximalityimpliesdifferentiability}, prove Theorem \ref{maintheorem}.

\begin{proof}[Proof of Proposition \ref{DoreMaleva}]
Lemma \ref{lemmaquasifinale} and Lemma \ref{almostlocmax} prove Proposition \ref{DoreMaleva}. Indeed, Lemma \ref{lemmaquasifinale} states that there is $f\colon \mathbb{H}^{n} \to \mathbb{R}$ Lipschitz such that $f-f_{0}$ is linear and $\mathrm{Lip}_{\mathbb{H}}(f-f_{0})\leq 2t_{0}\leq \mu$. It also states that there is $(x_{\ast}, E_{\ast})\in D^{f}$ satisfying, among other properties, $d(x_{\ast},x_{0})<\delta_{0}$ and  $E_{\ast}f(x_{\ast})>0$. Lemma \ref{almostlocmax} then shows that $E_{\ast}f(x_{\ast})$ is almost locally maximal in the sense of Proposition \ref{DoreMaleva}.
\end{proof}

\begin{proof}[Proof of Theorem \ref{maintheorem}]
Let $f_{0}\colon \mathbb{H}^{n}\to \mathbb{R}$ be a Lipschitz function. Multiplying $f_{0}$ by a non-zero constant does not change the set of points where it is Pansu differentiable. Hence we can assume $\mathrm{Lip}_{\mathbb{H}}(f_{0})\leq 1/4$. Fix an arbitrary pair $(x_{0},E_{0})\in D^{f_{0}}$. 

Apply Proposition \ref{DoreMaleva} with $\delta_{0}=1$, $\mu=1/4$ and $K=8$. This gives a Lipschitz function $f\colon \mathbb{H}^{n}\to \mathbb{R}$ such that $f-f_{0}$ is $\mathbb{H}$-linear with $\mathrm{Lip}_{\mathbb{H}}(f-f_{0})\leq 1/4$ and a pair $(x_{\ast},E_{\ast})\in D^{f}$, in particular $x_{\ast}\in N$, such that $E_{\ast}f(x_{\ast})>0$ is almost locally maximal in the following sense.

For any $\varepsilon>0$ there is $\delta_{\varepsilon}>0$ such that whenever $(x,E)\in D^{f}$ satisfies both:
\begin{enumerate}
\item $d(x,x_{\ast})\leq \delta_{\varepsilon}$, $Ef(x)\geq E_{\ast}f(x_{\ast})$,
\item for any $t\in (-1,1)$:
\begin{align*}
&|(f(x+tE_{\ast}(x))-f(x))-(f(x_{\ast}+tE_{\ast}(x_{\ast}))-f(x_{\ast}))|\\
& \qquad \leq 8|t| ( Ef(x)-E_{\ast}f(x_{\ast}) )^{\frac{1}{4}},
\end{align*}
\end{enumerate}
then:
\[Ef(x)<E_{\ast}f(x_{\ast})+\varepsilon.\]

Combining $\mathrm{Lip}_{\mathbb{H}}(f_{0})\leq 1/4$ and $\mathrm{Lip}_{\mathbb{H}}(f-f_{0})\leq 1/4$ gives $\mathrm{Lip}_{\mathbb{H}}(f)\leq 1/2$. Notice that $(x_{\ast},E_{\ast})$ is also almost locally maximal in the sense of Theorem \ref{almostmaximalityimpliesdifferentiability}, since the restriction on pairs above is weaker than that in Theorem \ref{almostmaximalityimpliesdifferentiability}. Hence Theorem \ref{almostmaximalityimpliesdifferentiability} implies that $f$ is Pansu differentiable at $x_{\ast}\in N$. A $\mathbb{H}$-linear function is Pansu differentiable everywhere. Consequently $f_{0}$ is Pansu differentiable at $x_{\ast}$, proving Theorem \ref{maintheorem}.
\end{proof}

\end{document}